\title[Representations of a.p. pseudodifferential operators]{Representations of almost periodic pseudodifferential operators and applications in spectral theory}
\author[P. Wahlberg]{Patrik Wahlberg}
\address{Dipartimento di Matematica, Universit{\`a} di Torino, Via Carlo Alberto 10, 10123 Torino (TO), Italy.}
\email{patrik.wahlberg@unito.it}
\numberwithin{equation}{section}
\newtheorem{thm}{Theorem}
\numberwithin{thm}{section}
\newcommand{\rubrik}{}
\newtheorem{prop}[thm]{Proposition}
\newtheorem{cor}[thm]{Corollary}
\newtheorem{lem}[thm]{Lemma}
\theoremstyle{definition}
\newtheorem{defn}[thm]{Definition}
\theoremstyle{remark}
\newtheorem{rem}[thm]{Remark}
\newcommand{\qo}{\mathbb Q}
\newcommand{\ro}{\mathbb R}
\newcommand{\rr}[1]{\mathbb R^{#1}}
\newcommand{\zz}[1]{\mathbb Z^{#1}}
\newcommand{\rrb}[1]{\mathbb R_B^{#1}}
\newcommand{\nn}[1]{\mathbb N^{#1}}
\newcommand{\co}{\mathbb C}
\newcommand{\no}{\mathbb N}
\newcommand{\pddm}[2]{\partial_#1^#2}
\newcommand{\Lop}{\mathscr L}
\newcommand{\ep}{\varepsilon}
\newcommand{\la}{\lambda}
\newcommand{\La}{\Lambda}
\newcommand{\Ker}{{\rm Ker}}
\newcommand{\Coker}{{\rm Coker}}
\newcommand{\Ran}{{\rm Ran}}
\newcommand{\Dom}{{\rm Dom}}
\newcommand{\wpr}{{\text{\footnotesize $\#$}}}
\newcommand{\eabs}[1]{\langle #1\rangle}
\newcommand{\vrum}{\vspace{0.1cm}}
\newcommand{\wt}{\widetilde}
\newcommand{\wh}{\widehat}
\newcommand{\re}{\mbox{Re}}
\newcommand{\im}{\mbox{Im}}
\newcommand{\linspan}{\mbox{span}}
\begin{document}

\begin{abstract}
The paper concerns algebras of almost periodic pseudodifferential operators on $\rr d$ with symbols in H\"ormander classes.
We study three representations of such algebras, one of which was introduced by Coburn, Moyer and Singer and the other two inspired by results in probability theory by Gladyshev.
Two of the representations are shown to be unitarily equivalent for nonpositive order.
We apply the results to spectral theory for almost periodic pseudodifferential operators acting on $L^2$ and on the Besicovitch Hilbert space of almost periodic functions.
\end{abstract}

\keywords{Pseudodifferential operators, almost periodic functions, spectral theory. MSC 2010 codes:
35S05, 35B15, 42A75, 47A10, 58J50}

\maketitle

\let\thefootnote\relax\footnotetext{Author's address: Dipartimento di Matematica, Universit{\`a} di Torino, Via Carlo Alberto 10, 10123 Torino (TO), Italy. Phone +390116702944, fax +390116702878. Email patrik.wahlberg@unito.it.}

\section{Introduction}

The paper is a study of three representations of almost periodic (a.p.) pseudodifferential operators on $\rr d$ with applications in the spectral theory of such operators. The symbols $a \in C^\infty(\rr {2d})$ we study belong to H\"ormander classes $S_{\rho,\delta}^m$ with $0 < \rho \leq 1$, $0 \leq \delta < 1$, $\delta
\leq \rho$ and $m \in \ro$. We are interested in the case when $a(\cdot,\xi)$ is a continuous a.p. function in the sense of Bohr for all $\xi \in \rr d$, and study the corresponding algebra of pseudodifferential operators in the Kohn--Nirenberg quantization, denoted $APL_{\rho,\delta}^\infty$. Such operators are called a.p. pseudodifferential operators, and have been investigated by Coburn, Moyer and Singer \cite{Coburn1}, Dedik \cite{Dedik1}, Filippov \cite{Filippov1}, 
Oliaro, Rodino and Wahlberg \cite{Oliaro1}, 
Pankov \cite{Pankov1}, Rabinovich \cite{Rabinovich1}, Shubin \cite{Shubin1,Shubin2,Shubin3,Shubin4,Shubin4b}, and others.
Recently, Ruzhansky and Turunen \cite{Ruzhansky1} have studied the related class of pseudodifferential operators on the torus $\mathbb T^d$ where the almost periodicity is replaced by periodicity.
Pseudodifferential operators on more general compact Lie groups are studied in their monograph \cite{Ruzhansky2}.

In an earlier paper \cite{Wahlberg1} we have shown that the transformation of a symbol $a$, defined by
\begin{equation}\nonumber
U(a)(\xi)_{\lambda,\lambda'} = \mathscr M_x ( a(x,\xi-\lambda') e^{- 2 \pi i
x \cdot (\lambda'-\lambda)} ), \quad \la, \la', \xi \in \rr d
\end{equation}
($\mathscr M_x$ denotes the mean value functional of a.p. functions), introduced by Gladyshev \cite{Gladyshev1} in probability theory, may be used as a representation of $APL_{\rho,\delta}^\infty$. More precisely, for fixed $\xi \in \rr d$, $U(a)(\xi)$ is considered an operator on $l^2(\rr d)$ with kernel $U(a)(\xi)_{\lambda,\lambda'}$, and $\xi \mapsto U(a)(\xi)$ is used as an operator-valued symbol of a Fourier multiplier operator $U(a)(D)$ acting on vector-valued function spaces like $\mathscr S(\rr d, l^2(\rr d))$. It is shown in \cite{Wahlberg1} that the map $APL_{\rho,\delta}^\infty \ni a(x,D) \mapsto U(a)(D)$ preserves identity, composition, formal adjoint, and positivity (the latter result was proved by Gladyshev \cite{Gladyshev1} for a different class of symbols), and therefore deserves the designation of a representation.

Moreover, the evaluation at the origin of the symbol $U(a)(0)$ gives a representation of $APL_{\rho,\delta}^\infty$
on the space of unbounded operators on $l^2(\rr d)$, such that each $a(x,D) \in APL_{\rho,\delta}^m$ is mapped to a
continuous operator $l_s^2(\rr d) \mapsto l_{s-m}^2(\rr d)$, $s \in \ro$. The sequence space $l_s^2(\rr d)$ consists of functions on $\rr d$ which are zero everywhere except for a countable set, with norm
\begin{equation}\nonumber
\| x \|_{l_s^2} = \left( \sum_{\la \in \rr d} | x_\la|^2 (1+|\la|^2)^{s} \right)^{1/2}.
\end{equation}
Thus two representations of $APL_{\rho,\delta}^\infty$ are defined.

A third representation was introduced by Coburn, Moyer and Singer \cite{Coburn1} and studied further by Shubin \cite{Shubin2,Shubin4b}. The idea is to define the spatially translated symbol $a_x(y,\xi)=a(x+y,\xi)$ and for fixed $x \in \rr d$ let the operator $a_x(y,D)$ act on a function of two variables $u(x,y)$. That is, we set $(Au)(x,y) = a_x(y,D)u(x,\cdot)(y)$, which defines an operator acting on $B^2(\rr d) \otimes L^2(\rr d)$ where $B^2(\rr d)$ denotes the space of Besicovitch a.p. functions. In \cite{Shubin2} it is shown that $A: B^2(\rr d) \otimes H^s(\rr d) \mapsto B^2(\rr d) \otimes H^{s-m}(\rr d)$ continuously for $s \in \ro$, where $H^s(\rr d)$ denotes a Sobolev space on $\rr d$. Moreover, it shown that $a(y,D) \mapsto A=A(a(y,D))$ preserves identity, composition and formal adjoint, with rather brief arguments.

An important feature of the representation $A(a(y,D))$ is the fact that it is adjoined to a von Neumann algebra $\mathscr A_B$ which is a factor of type II$_\infty$ \cite{Coburn1,Shubin2,Shubin4b}. If $m \leq 0$ and $a \in APS_{\rho,\delta}^m$ this means that $A(a(y,D)) \in \mathscr A_B$. Based on this fact, Shubin \cite{Shubin2} develops
results on the spectral asymptotics of linear PDEs with smooth a.p. coefficients, that are uniformly elliptic and essentially selfadjoint.
Index theory in this context is discussed in \cite{Coburn1,Shubin4b}.

This paper contains four contributions.

First we extend the results of \cite{Wahlberg1} and simplify their proofs. In particular, we prove that for $a \in APS_{\rho,\delta}^m$, the operator
\begin{equation}\nonumber
U(a)(D): H^{s}(\rr d, l_{s}^2(\rr d)) \mapsto H^{s-|s|-|m-s|}(\rr d, l_{s-m}^2(\rr d)), \quad s \in \ro,
\end{equation}
is continuous. Here we denote by $H^{s}(\rr d, l_{s}^2(\rr d))$ a vector-valued Sobolev space. If $m \leq 0$ we thus have $U(a)(D) \in \Lop(L^2(\rr d,l^2(\rr d))$. The map $a(y,D) \mapsto U(a)(D)$ is a faithful $^+$-representation of $APL_{\rho,\delta}^\infty$ on
an algebra of unbounded operators on $L^2(\rr d,l^2(\rr d))$,
that preserves positivity in the sense that $a(y,D) \geq 0$ on the trigonometric polynomials if and only if $U(a)(D) \geq 0$.
The map $a(y,D) \mapsto U(a)(0)$ is a faithful $^+$-representation of $APL_{\rho,\delta}^\infty$ on
an algebra of unbounded operators on $l^2(\rr d)$.
It has the property that $a(y,D): H^s(\rrb d) \mapsto H^{s-m}(\rrb d)$ and $U(a)(0): l_s^2(\rr d) \mapsto l_{s-m}^2(\rr d)$ are unitarily equivalent for each $s \in \ro$. Here $H^s(\rrb d)$ denotes a Sobolev--Besicovitch space of a.p. functions (cf. \cite{Shubin1,Shubin2,Shubin4}).

Secondly we give a detailed exposition of the results by Coburn--Moyer--Singer and Shubin concerning the representation
\begin{equation}\label{representation0}
a(y,D) \mapsto A=A(a(y,D)).
\end{equation}
We prove that the operator $A$ extends to a continuous map $A: B^2(\rr d) \otimes H^s(\rr d) \mapsto B^2(\rr d) \otimes H^{s-m}(\rr d)$ for $s \in \ro$, and that the map \eqref{representation0} is a faithful $^+$-representation of $APL_{\rho,\delta}^\infty$ on an algebra of unbounded operators on $B^2(\rr d) \otimes L^2(\rr d)$, that preserves positivity in the sense that $a(y,D) \geq 0$ on the trigonometric polynomials if and only if $A \geq 0$.

Thirdly, we prove that the representations \eqref{representation0} and $a(y,D) \mapsto U(a)(D)$ are unitarily equivalent for $m \leq 0$. Under this assumption $A(a(y,D)) \in \Lop(B^2(\rr d) \otimes L^2(\rr d))$ and $U(a)(D) \in \Lop(L^2(\rr d,\l^2(\rr d)))$.

Finally, as a fourth topic we discuss applications of the representations in spectral theory of a.p. pseudodifferential operators.
Shubin has shown that the spectra of a.p. pseudodifferential operators considered as operators on $L^2(\rr d)$ and on $B^2(\rr d)$, respectively, coincide. More precisely we have $\sigma_{L^2}(\overline{a(y,D)}) = \sigma_{B^2}(\overline{a(y,D)})$ (where $\overline{a(y,D)}$ denotes the closure of an unbounded closable operator \cite{Reed1}) when
$0 \leq \delta < \rho \leq 1$ and $a \in APS_{\rho,\delta}^0$ or $a \in APHS_{\rho,\delta}^{m, m_0}$ with $m_0>0$ \cite{Shubin3}. (Here $APHS_{\rho,\delta}^{m, m_0}$ denotes formally hypoelliptic symbols, see Definition \ref{aphypo1}.)
He has also shown that $\sigma_{L^2}(\overline{a(y,D)}) = \sigma_{L^2,\rm ess}(\overline{a(y,D)})$, i.e. the whole spectrum  is essential under the same assumptions \cite{Shubin2}.
In \cite{Rozenblum1}, Rozenblum, Shubin and Solomyak indicate with a brief sketch that the same assumptions imply $\sigma_{B^2}(\overline{a(y,D)}) = \sigma_{B^2,\rm ess}(\overline{a(y,D)})$. We give a detailed proof of this result.
As a consequence of these results on the essential spectrum, it follows that an operator in $APL_{\rho,\delta}^0$ (where $\rho$,$\delta$ satisfy \eqref{rhodeltakrav1}) cannot be compact on neither $L^2$ nor on $B^2$ unless it is zero.

If $a \in APS_{\rho,\delta}^m$ and the symbol $a$ does not depend on $\xi$, then we show that $\sigma_{B^2}(a(y,D)) = \sigma_{B^2,\rm ess}(a(y,D)) = \overline{\Ran (a)}$. If $a$ does not depend on $x$, then $\sigma_{B^2}(\overline{a(D)}) = \sigma_{B^2,\rm ess}(\overline{a(D)}) = \overline{\Ran (a)}$, the point spectrum is $\sigma_{B^2,\rm p}(\overline{a(D)}) = \Ran(a)$ and the continuous spectrum is $\sigma_{B^2,\rm cont}(\overline{a(D)}) = \overline{\Ran (a)} \setminus \Ran(a)$.
If there exists $\xi_0 \in \rr d$ such that $a(x,\xi_0)=\mathscr M (a(\cdot,\xi_0))$ for all $x \in \rr d$, then
$\mathscr M (a(\cdot,\xi_0)) \in \sigma_{B^2,\rm ess}(\overline{a(x,D)})$.

Finally, we have the following invariances of the spectrum over the three representations. If
$0 < \rho \leq 1$, $0 \leq \delta < 1$, $\delta \leq \rho$ and $a \in APS_{\rho,\delta}^0$ then
\begin{equation}\nonumber
\sigma_{L^2}(a(y,D)) = \sigma_{B^2 \otimes L^2}(A(a(y,D)) = \sigma_{l^2}(U(a)(0)) = \sigma_{L^2(\rr d, l^2)}(U(a)(D)).
\end{equation}
(The first identity is proved in \cite{Shubin4b}).
If $0 \leq \delta < \rho \leq 1$ and $a \in APHS_{\rho,\delta}^{m,m_0}$ where $m_0>0$ then
\begin{equation}\nonumber
\sigma_{L^2}(\overline{a(x,D)}) = \sigma_{B^2 \otimes L^2}(\overline{A(a(y,D))}) = \sigma_{l^2}(\overline{U(a)(0)}).
\end{equation}
(The first identity is proved in \cite{Shubin2,Shubin4b} under slightly less general hypotheses.) Note that $\sigma_{L^2(\rr d, l^2)}(\overline{U(a)(D)})$ is missing in the latter sequence of equalities. Although $U(a)(D)$ is adjoined to a von Neumann algebra which is a factor of type II$_\infty$, in the same way as $A(a(y,D))$, the latter representation is not as useful for spectral theory as the latter, since there is no connection to the spectrum of the original operator, that is $\sigma_{L^2}(\overline{a(x,D)}) = \sigma_{B^2}(\overline{a(x,D)})$.

\section{Preliminaries}

$C$ denotes a positive constant that may vary over
equalities and inequalities.
For an integer $m \geq 0$ we denote by $C^m (\rr d)$ the space of
functions such that $\partial^\alpha f$ is continuous for
$|\alpha|\leq m$, and $C^\infty = \bigcap_m  C^m$.
$C_b (\rr d)$ is the space of continuous and supremum bounded functions,
and $C_b^\infty (\rr d)$ is the space of smooth functions such that a derivative of any order is supremum bounded.
$C_c^\infty(\rr d)$ denotes the space of compactly
supported smooth (test) functions.
The Schwartz space of smooth rapidly decreasing functions is written
$\mathscr S (\rr d)$ and its dual $\mathscr S' (\rr d)$ is the space of tempered distributions.
A character on $\rr d$ with frequency $\xi \in \rr d$ is denoted $e_\xi(x) = e^{2 \pi i \xi \cdot x}$, and
$TP(\rr d)$ is the space of trigonometric polynomials on
$\rr d$ consisting of functions of the form
\begin{equation}\label{trigpol1}
f(x) = \sum_{n=1}^N a_n e_{\xi_n} (x), \quad a_n \in
\co, \quad \xi_n \in \rr d.
\end{equation}
If $X$ is a topological vector space and the coefficients $a_n \in X$ in \eqref{trigpol1} then $f \in TP(\rr d,X)$ is an $X$-valued trigonometric polynomial.

As usual we denote by $\delta_a=\delta_0(\cdot-a)$, $a \in \rr d$, a translated Dirac measure at the origin of $\rr d$. We denote by $\delta_{(a)}$ the function on $\rr d$ that is zero everywhere except $a \in \rr d$ where it equals one.
The same notation is used when the domain is $\zz d$, in which case $\delta_{(0)}$ is the Kronecker delta.
Translation is written $T_x f(y)=f(y-x)$ and modulation $M_\xi f(x)=e^{2 \pi i x \cdot \xi} f(x)$ for functions defined on $\rr d$. The linear span of a finite set of vectors in a vector space is denoted $\linspan(x_1,\cdots,x_n)$.

The Fourier transform for scalar- or Hilbert space-valued functions is defined by
\begin{equation}\nonumber
\mathscr F f(\xi) = \wh f(\xi) = \int_{\rr d} f(x) e^{- 2 \pi i x \cdot \xi} dx, \quad f \in \mathscr S(\rr d).
\end{equation}

We denote coordinate reflection by $Rf(x)=f(-x)$. By $H \otimes H'$ we denote the Hilbert space tensor product \cite{Reed1} of the Hilbert spaces $H$ and $H'$. If the spaces are merely vector spaces, then we use the same notation for the algebraic tensor product, that is, the space of finite linear combinations of simple tensors.

We will use the counting measure on $\rr d$ and an associated family of weighted Hilbert spaces.
For $s \in \ro$ and ${\eabs \la}=(1+|\la|^2)^{1/2}$, $l_s^2 = l_s^2(\rr d)$ is the space of complex-valued sequences $(x_\la)_{\la \in \rr d}$ with at most countably many nonzero entries, normed by
\begin{equation}\nonumber
\| x \|_{l_s^2} = \left(\sum_{\lambda \in \rr d} \eabs{\lambda}^{2s}
|x_\lambda|^2 \right)^{1/2}.
\end{equation}
The spaces $l_s^2$ are nonseparable Hilbert spaces. When $s=0$ we write $l_0^2=l^2$,
and $l_f^2$ is the subspace of $l^2$ consisting of sequences of finite support. (This condition can also be expressed as compact support in the discrete topology of $\rr d$.)
We have
$$
\| x \|_{l_s^2} = \sup_{\| y \|_{l_{-s}^2} \leq 1} | (x,y)_{l^2} |
$$
where $(x,y)_{l^2} = \sum_{\la \in \rr d} x_\la \overline{y_\la}$.
The dual of $l_s^2$, denoted $(l_s^2)'$, can be identified isometrically $(l_s^2)'=l_{-s}^2$ by means of the bilinear form $(\cdot,\cdot)_{l^2}$.

We will use some integration theory for vector-valued functions.
A function $f: M \mapsto X$, where $(M,\mathscr B,\mu)$ is a measure space and $X$ is a Banach space, is said to be \textit{strongly measurable} \cite{Diestel1,Reed1} if there exists a sequence $(f_n)$ of simple (finite range) measurable functions such that $f_n \rightarrow f$ almost everywhere.
The function $f$ is \textit{weakly measurable} if $M \ni x \mapsto (f(x),y)$ is measurable for all $y \in X'$, which denotes the topological dual of $X$. Strongly measurable functions are always weakly measurable \cite{Reed1}, and if $X$ is a separable Hilbert space then weakly measurable functions are strongly measurable \cite[Thm.~IV.22]{Reed1}.
The Bochner integral $\int_M f(x) d\mu(x)$ (cf. \cite{Diestel1}) exists provided $f$ is strongly measurable and
$$
\int_M \| f(x) \|_X d\mu(x) < \infty.
$$

As a particular case of this situation we use functions and distributions defined on $\rr d$ that take values in $l_s^2$. More specifically we need the $l_s^2$-valued Sobolev spaces $H^t(\rr d,l_s^2)$. A tempered $l_s^2$-valued distribution $F \in \mathscr S'(\rr d,l_s^2)$ satisfies $F \in H^t(\rr d,l_s^2)$ if $\wh F \in L_{\rm loc}^2(\rr d,l_s^2)$ and
\begin{equation}\nonumber
\| F \|_{H^t(\rr d,l_s^2)} = \left( \int_{\rr d} \| \wh F (\xi) \|_{l_s^2}^2 {\eabs \xi}^{2t} d \xi \right)^{1/2} < \infty.
\end{equation}
Here $\wh F \in L_{\rm loc}^2(\rr d,l_s^2)$ refers to Bochner integrability. This implies that $\wh F$ is strongly measurable, which in turn implies that $\wh F$ takes values in a separable subspace of $l_s^2$, almost everywhere. Therefore also $F$ takes values in a separable subspace of $l_s^2$.
For a background on vector-valued distributions we refer to Amann's book \cite{Amann1}.

We will also use continuous functions $U: \rr d \mapsto \Lop(\l_s^2,l_t^2)$, where $\Lop(l_s^2,l_t^2)$ denotes the space of linear bounded operators $l_s^2 \mapsto l_t^2$ equipped with the operator norm, $s,t \in \ro$, $\Lop(l_s^2) := \Lop(l_s^2,l_s^2)$.
Since $U$ is strongly measurable it may be Bochner integrated provided its norm is integrable.

We will work with spaces of a.p. functions
(cf. \cite{Amerio1,Levitan1,Shubin4}). The basic space of uniform a.p. functions is denoted $C_{\rm ap}(\rr d)$ and defined as follows. A
set $\Omega \subseteq \rr d$ is called \textit{relatively dense} if there
exists a compact set $K \subseteq \rr d$ such that $(x+K) \cap \Omega \neq
\emptyset$ for any $x \in \rr d$. An element $\tau \in \rr d$ is
called an $\ep$-almost period of a function $f \in C_b(\rr d)$ if
$\sup_x |f(x+\tau)-f(x)|<\ep$. $C_{\rm ap}(\rr d)$ is defined as the
space of all $f \in C_b(\rr d)$ such that, for any $\ep>0$, the set
of $\ep$-almost periods of $f$ is relatively dense.
With the understanding that the uniform a.p. functions are a subspace
of $C_b(\rr d)$, this original definition by H. Bohr is equivalent
to the following three \cite{Levitan1,Shubin4}:

\begin{enumerate}

\item[(i)] The set of translations $\{ f(\cdot -x)\}_{x \in \rr
d}$ is precompact in $C_b(\rr d)$;

\vrum

\item[(ii)] $f=g \circ i_B$ where $i_B$ is the canonical
homomorphism from $\rr d$ into the Bohr compactification $\rrb d$ of
$\rr d$ and $g \in C(\rrb d)$. Thus $f$ can be extended to a
continuous function on $\rrb d$;

\vrum

\item[(iii)] $f$ is the uniform limit of trigonometric
polynomials.

\end{enumerate}

The Bohr compactification $G_B$ of a locally compact abelian group $G$ is a compact abelian topological group defined as the dual group of ($G'$, with discrete topology), that is $G_B=(G'_{\rm discr})'$ \cite{Shubin4}.

The space $C_{\rm ap}(\rr d)$ is a conjugate-invariant complex algebra of
uniformly continuous functions, and a Banach space
with respect to the $L^\infty$ norm.
For $f \in C_{\rm ap}(\rr d)$ the mean
value functional
\begin{equation}\label{meandef1}
\mathscr M(f) = \lim_{T \rightarrow +\infty} T^{-d} \int_{s+K_T} f(x) {\,}
dx,
\end{equation}
where $K_T = \{ x \in \rr d:\ 0 \leq x_j \leq T, \
j=1,\dots,d \}$, exists independent of $s \in \rr d$. By $\mathscr M_x$ we
understand the mean value in the variable $x$ of a function of
several variables. The Bohr--Fourier transformation
(cf. \cite{Levitan1}) is defined by
\begin{equation}\nonumber
\mathscr F_{B} f (\la) = \wh f_\la = \mathscr M_x(f(x) e^{-2 \pi i \la \cdot x}), \quad \la \in \rr d,
\end{equation}
and $\wh f_\la \neq 0$ for at most countably many $\la \in \rr d$.
The set $\La = \La(f) = \{ \la \in \rr d: \wh f_\la \neq 0 \}$ is called the set of frequencies for $f$.

A function $f \in C_{\rm ap}(\rr d)$ may be reconstructed from its
Bohr--Fourier coefficients $(\wh f_\la)_{\la \in \La}$ using
Bochner--Fej{\' e}r polynomials \cite{Levitan1,Shubin4}. The reconstruction formula may be written as the uniform limit
\begin{equation}\label{fourierreconstruction1}
f(x) = \lim_{n \rightarrow \infty} \sum_{\la \in \La} K_n(\la) \ \wh f_\la \ e^{2 \pi i x \cdot \la}
\end{equation}
where $(K_n)_{n=1}^\infty$ is a sequence of functions on $\rr d$, such that for each $n \geq 1$ we have: The support of $K_n$ is finite, $K_n(\la)=0$ for $\la \notin \La$, and $0 \leq K_n(\la) \leq 1$ for all $\la \in \rr d$. Furthermore, $K_n (\la) \rightarrow 1$ as $n \rightarrow \infty$ for each $\la \in \La$.
For a set $\mathcal F \subseteq C_{\rm ap}(\rr d)$ that is precompact (or, synonymously, totally bounded) the limit \eqref{fourierreconstruction1} is uniform over $\mathcal F$ (see e.g. \cite[Lemma 1]{Wahlberg1}):
\begin{equation}\label{fourierreconstruction2}
\sup_{f \in \mathcal F} \sup_{x \in \rr d} \left| f(x) - \sum_{\la \in \La} K_n(\la) \wh f_\la \ e^{2 \pi i x \cdot \la} \right| \rightarrow 0, \quad n \rightarrow \infty.
\end{equation}
For $m \in \no$, the space $C_{\rm ap}^m(\rr d)$ is defined as all $f \in
C^m(\rr d)$ such that $\partial^\alpha f \in C_{\rm ap}(\rr d)$ for
$|\alpha| \leq m$, and $C_{\rm ap}^\infty(\rr d) = \bigcap_{m \in \no}
C_{\rm ap}^m(\rr d)$. Then $C_{\rm ap}^\infty = C_{\rm ap} \cap C_b^\infty$
\cite{Shubin4}.

The mean value defines an inner product
\begin{equation}\label{besicovitch1}
(f,g)_B = \mathscr M(f \overline g), \quad f,g \in C_{\rm ap}(\rr d),
\end{equation}
and $\| f \|_B=(f,f)_B^{1/2}$ defines a norm on $C_{\rm ap}(\rr d)$, since $\| f \|_B=0 \Rightarrow f=0$, due to Plancherel's formula for $C_{\rm ap}(\rr d)$ \cite{Levitan1}
\begin{equation}\label{plancherel1}
\mathscr M (|f|^2) = \sum_{\la \in \rr d} | \wh f_\la|^2.
\end{equation}
The completion of $C_{\rm ap}(\rr d)$ in the norm $\| \cdot \|_B$ is the
Hilbert space of Besicovitch a.p. functions $B^2(\rr d)$
\cite{Shubin1}. We have the isometric isomorphism $B^2(\rr d) \simeq L^2(\rrb d)$ where $L^2(\rrb d)$ denotes the square integrable functions on the Bohr compactification $\rrb d$, equipped with its Haar measure $\mu$, normalized to $\mu(\rrb d)=1$ \cite{Shubin4}.

For a Banach space $X$, the space of $X$-valued a.p. functions is denoted $C_{\rm ap}(\rr d,X)$.
It enjoys many of the properties of $C_{\rm ap}(\rr d)$, for example the equivalence among properties (i), (ii) and (iii) above (cf. \cite{Amerio1,Corduneanu1,Levitan1}). When $X$ is a Hilbert space we also have the natural generalization of Plancherel's formula \eqref{plancherel1}, and $C_{\rm ap}(\rr d,X)$ can be completed in the norm
\begin{equation}\nonumber
\| f \|_{B(\rr d, X)} = \left( \mathscr M_x ( \| f(x) \|_X^2 ) \right)^{1/2}
\end{equation}
to the Hilbert space-valued Besicovitch space $B^2(\rr d, X)$.

Analogous to the usual Sobolev space norm
\begin{equation}\nonumber
\| f \|_{H^s(\rr d)} = \left(\int_{\rr d} {\eabs \xi}^{2s} |\widehat
f(\xi)|^2 {\,} d\xi \right)^{1/2},
\end{equation}
Shubin \cite{Shubin1} has defined Sobolev--Besicovitch spaces of
a.p. functions $H^s(\rrb d)$ for $s \in \ro$, as the completion of
$TP(\rr d)$ in the norm corresponding to the inner product
\begin{equation}\nonumber
(f,g)_{H^s(\rrb d)} = \sum_{\xi \in \rr d} {\eabs \xi}^{2s} \wh f_\xi \
\overline{\wh g}_\xi, \quad f,g \in TP(\rr d).
\end{equation}
It follows that
\begin{equation}\label{bohrfourierunitary1}
\mathscr F_B: H^s(\rrb d) \mapsto l_s^2(\rr d) \quad \mbox{is unitary for any $s \in \ro$}.
\end{equation}
The spaces $H^s(\rrb d)$ are nonseparable Hilbert spaces, $H^0(\rrb d)=B^2(\rr d)$, and
$H^{\infty}(\rrb d) = \bigcap_{s \in \ro} H^s(\rrb d)$.
We have
$$
\| f \|_{H^s(\rrb d)} = \sup_{\| g \|_{H^{-s}(\rrb d)} \leq 1} |(f,g)_B|,
$$
and the dual $(H^s)'(\rrb d)$ can be identified isometrically with $H^{-s}(\rrb d)$ by means of $(\cdot,\cdot)_B$.
We have the embedding $C_{\rm ap}^\infty(\rr d) \subseteq H^{\infty}(\rrb
d)$, but there is no result corresponding to the Sobolev embedding
theorem for the Sobolev--Besicovitch spaces. In fact,
$H^{\infty}(\rrb d)$ is not embedded in $C_{\rm ap}(\rr d)$ \cite{Shubin1}.

We will use the family of H\" ormander symbol classes (cf. \cite{Folland1,Hormander3,Shubin5}),
with an almost periodicity condition in the space variables for each frequency (cf. \cite{Shubin1,Shubin2,Shubin3,Shubin4,Shubin4b}).
We impose the conditions
\begin{equation}\label{rhodeltakrav1}
0 < \rho \leq 1, \quad 0 \leq \delta < 1, \quad \delta
\leq \rho.
\end{equation}

\begin{defn}\label{aphormander1}
For $m \in \ro$ the space
$APS_{\rho,\delta}^m$ is defined as the space of all $a \in
C^\infty(\rr {2d})$ such that $a(\cdot,\xi) \in C_{\rm ap}(\rr d)$ for all $\xi \in \rr d$, and
\begin{equation}\label{hormclass1}
\sup_{x,\xi \in \rr d} \eabs \xi^{- m + \rho |\alpha| - \delta
|\beta|} |\partial_\xi^\alpha
\partial_x^\beta a(x,\xi)| < \infty, \quad \alpha,\beta \in
\nn d.
\end{equation}
\end{defn}

We consider the Kohn--Nirenberg quantization of pseudodifferential
operators, defined by
\begin{equation}\label{kndef2}
a(x,D) f(x) = \int_{\rr d} e^{2 \pi i \xi \cdot x} a(x,\xi) \wh f(\xi) {\,} d\xi, \quad f  \in \mathscr S (\rr d).
\end{equation}
If the symbol $a$ does not depend on $x$ then we write $a(D)$ instead of $a(x,D)$.
If $a \in APS_{\rho,\delta}^m$ then $a(x,D): \mathscr S (\rr d) \mapsto \mathscr S (\rr d)$ continuously.
In order to extend the operator to act on $C_b^\infty(\rr d)$ one modifies the definition \eqref{kndef2} into
\begin{equation}\label{kndef3}
a(x,D) f(x) = \lim_{\ep \rightarrow +0} \int_{\rr {2d}} \psi(\ep y)
\ \psi(\ep\xi) \ e^{2 \pi i \xi \cdot (x-y)} \ a(x,\xi) \ f(y) {\,} dy {\,}
d\xi
\end{equation}
where $\psi \in C_c^\infty(\rr d)$ equals one in a neighborhood of
the origin.
With this definition it can be shown (cf. \cite{Shubin1,Shubin4}) that $a(x,D):
C_{\rm ap}^\infty(\rr d) \mapsto C_{\rm ap}^\infty(\rr d)$ continuously if $a \in APS_{\rho,\delta}^m$.

The operators corresponding to the symbol classes $APS_{\rho,\delta}^m$ are called a.p. pseudodifferential operators, denoted $APL_{\rho,\delta}^m$, and following convention (cf. \cite{Shubin1,Shubin2,Shubin4}),  we set
\begin{equation}\nonumber
\begin{aligned}
APS_{\rho,\delta}^\infty & = \bigcup_{m \in \ro} APS_{\rho,\delta}^m, \quad APS^{-\infty} & = \bigcap_{m \in \ro} APS_{\rho,\delta}^m, \\
APL_{\rho,\delta}^\infty & = \bigcup_{m \in \ro} APL_{\rho,\delta}^m, \quad APL^{-\infty} & = \bigcap_{m \in \ro} APL_{\rho,\delta}^m,
\end{aligned}
\end{equation}
where the intersections do not depend on $\rho$, $\delta$.

Let $T_{0,-\xi} a(x,\eta) = a(x,\eta+\xi)$ denote translation in the second argument.
We note that
\begin{equation}\label{translationsymbol1}
(T_{0,-\xi} a)(x,D) = M_{-\xi} \circ a(x,D) \circ M_\xi
\end{equation}
holds for both definitions \eqref{kndef2} and \eqref{kndef3}.

For a pair of symbols $a$, $b$ such that $a(x,D) b(x,D)$ is well defined we define the
\emph{symbol product} $\wpr_0$ by
\begin{equation}\label{symbolproduct1}
c = a \wpr_0 b \quad \Longleftrightarrow \quad c(x,D) = a(x,D) \ b(x,D).
\end{equation}
For the symbol classes $APS_{\rho,\delta}^m$, the symbol product is a continuous bilinear map (cf. \cite{Folland1,Hormander3})
\begin{equation}\label{apsymbkont1}
APS_{\rho,\delta}^{m} \ \wpr_0 \ APS_{\rho,\delta}^{n} \subseteq
APS_{\rho,\delta}^{m+n}, \quad m,n \in \ro.
\end{equation}
In fact, it is shown in \cite[Thm.~3.1]{Shubin4} that $a \ \wpr_0 \ b \in APS_{\rho,\delta}^{m+n}$ for
$a \in APS_{\rho,\delta}^{m}$ and $b \in APS_{\rho,\delta}^{n}$, when $\rho=1$ and $\delta=0$. The proof extends to $\rho,\delta$ that satisfy \eqref{rhodeltakrav1}. For a proof of the continuity of the bilinear map \eqref{apsymbkont1} under more general assumptions we refer to \cite{Folland1,Hormander3}.

The following definition gives a sufficient condition for the operator $a(x,D)$ to be hypoelliptic (cf. \cite{Shubin1,Shubin5}).

\begin{defn}\label{aphypo1}
A symbol $a \in APS_{\rho,\delta}^m$ is called formally hypoelliptic \cite{Shubin5}, denoted $a \in APHS_{\rho,\delta}^{m,m_0}$, provided there exists $C,R > 0$ and $m_0 \leq m$ such that
\begin{equation}\label{hypoelliptic1}
\begin{aligned}
|a(x,\xi)| & \geq C {\eabs \xi}^{m_0}, \quad |\xi| \geq R \\
\left| \left( \pddm \xi \alpha \pddm x \beta a (x,\xi) \right) a(x,\xi)^{-1} \right| & \leq C_{\alpha,\beta} {\eabs \xi}^{-\rho|\alpha| + \delta|\beta|}, \quad |\xi| \geq R. \quad C_{\alpha,\beta}>0.
\end{aligned}
\end{equation}
\end{defn}

The space of operators $a(x,D)$ such that $a \in APHS_{\rho,\delta}^{m,m_0}$ is denoted $APHL_{\rho,\delta}^{m,m_0}$.
If $0 \leq \delta < \rho \leq 1$ and $a \in APHS_{\rho,\delta}^{m,m_0}$, there exists according to \cite[Thm.~5.1]{Shubin1} a symbol $b \in APHS_{\rho,\delta}^{-m_0,-m}$ such that
\begin{equation}\label{regularizer1}
b(x,D) a(x,D) = I - r(x,D), \quad a(x,D) b(x,D) = I - \wt r(x,D),
\end{equation}
where $r, \wt r \in APS^{-\infty}$. The operator $b(x,D)$ is called a parametrix.

Shubin has shown
\begin{equation}\label{normlikhet1}
a \in APS_{\rho,\delta}^0 \quad \Longrightarrow \quad \| a(x,D) \|_{\Lop(L^2)} = \| a(x,D) \|_{\Lop(B^2)} < \infty.
\end{equation}
In fact, the norm equality is proved in \cite[Thm.~5.1]{Shubin3} (see also \cite{Shubin4}) for $0 \leq \delta < \rho \leq 1$, and the proof extends to the assumption \eqref{rhodeltakrav1}.
If $a \in APS_{\rho,\delta}^m$ where $m>0$ then $a(x,D)$ is in general not bounded, neither on $L^2(\rr d)$ nor on $B^2(\rr d)$. Instead it may be considered an unbounded operator on either $L^2(\rr d)$ or $B^2(\rr d)$. From \eqref{apsymbkont1} it follows that $APL_{\rho,\delta}^{\infty}$ is an algebra of unbounded operators on $L^2(\rr d)$ and on $B^2(\rr d)$.

The space of linear unbounded closable operators on a Hilbert space $H$ is denoted $L(H)$. For $T \in L(H)$ we will denote by $\Ker \ T$, $\Ran \ T$, $\Dom \ T$, $\Coker \ T=H/\Ran \ T$, and $\overline T$,
its kernel, range, domain, cokernel, and closure, respectively. $\Dom \ T$ will always be dense in $H$.
The notation $\overline f$ is also used for the complex conjugate of a function $f$, and the closure $\overline A$ of a subset $A$ of a topological space,
ambiguity being avoided from the context.

An operator $A \in L(H)$ is positive on a
vector space $X \subseteq H$ if $(Af,f)_H \geq 0$ for all $f
\in X$.
This is denoted $A \geq 0$ where the spaces $X$ and $H$ are understood
from the context. We will use the following pairs $(X,H)$:
$(\mathscr S(\rr d),L^2(\rr d))$, $(TP(\rr d),B^2(\rr d))$,
$(l_f^2,l^2)$, $(\mathscr S(\rr d,l_f^2),L^2(\rr d,l^2))$ and
$( TP(\rr d, \mathscr S(\rr d)), B^2(\rr d, L^2(\rr d)))$.

Let $\mathscr A$ be an algebra of unbounded operators on a Hilbert space $H$. A common domain $\Dom \ {\mathscr A} \subseteq H$, dense in $H$, is assumed to exists for all $A \in \mathscr A$. Each operator $A \in \mathscr A$ is assumed to have a formal adjoint $A^+ \in \mathscr A$ that satisfies $(Af,g)_H=(f,A^+ g)_H$ for all $f,g \in \Dom \ {\mathscr A}$.
A \textit{representation} of $\mathscr A$ on $L(H')$, where $H'$ is a Hilbert space, is a linear map
$$
T: \mathscr A \mapsto L(H')
$$
that preserves operator composition and identity.
It is assumed that there exists a common domain $\Dom \ T {\mathscr A} \subseteq H'$, dense in $H'$.
The representation is called a faithful $^+$-representation if it preserves the formal adjoint operation and is injective.

\section{The representation by Gladyshev}

Gladyshev \cite{Gladyshev1} introduced a transformation of covariance functions of a certain type of second-order stochastic processes, called almost periodically correlated (or cyclostationary), which has been important in the development of the corresponding branch of probability theory. The transformation maps the covariance function (or operator kernel) of an almost periodically correlated stochastic process into an operator kernel of a translation-invariant operator, which may be interpreted as the covariance function of a vector-valued weakly stationary stochastic process. This fact is due to Gladyshev's result that the map preserves positivity.
Since the covariance operator of an almost periodically correlated stochastic process has a symbol that is almost periodic in the first variable, there is a connection to the theory of a.p. pseudodifferential operators.

Let us recall some definitions from \cite{Wahlberg1}
where Gladyshev's transformation is studied in the context of a.p. pseudodifferential operators.
Throughout this section we assume that \eqref{rhodeltakrav1} holds.
We denote the Bohr--Fourier coefficients of a symbol $a \in APS_{\rho,\delta}^m$ by
\begin{equation}\label{bohrfourier1}
a_\lambda (\xi) = \wh{a(\cdot,\xi)}_\la = \mathscr M_x( a(x,\xi) e^{- 2 \pi i
\lambda \cdot x}), \quad \xi \in \rr d, \quad \lambda \in \rr d.
\end{equation}
Then
\begin{equation}\nonumber
\Lambda = \Lambda(a) = \{ \lambda \in \rr d: \ \exists \xi \in \rr
d: \ a_\lambda(\xi) \neq 0 \}
\end{equation}
is countable \cite{Wahlberg1}. Without loss of generality we may assume that $\La$ is a linear space over $\qo$ with closure $\overline \La = \rr d$.
Based on \eqref{bohrfourier1} we define
\begin{equation}\label{Udef}
U(a)(\xi)_{\lambda,\lambda'} = a_{\lambda'-\lambda} (\xi-\lambda'),
\quad \lambda,\lambda', \xi \in \rr d, \quad a \in APS_{\rho,\delta}^m,
\end{equation}
which is a slight modification of the definition in \cite{Wahlberg1}. In fact, there we defined $U(a)(\xi)_{\lambda,\lambda'}$ only for $(\la,\la') \in \La \times \La$.
We consider $U(a)(\xi)$ as the kernel of an operator acting on
complex-valued sequences $(z_\la)_{\la \in S}$ with nonzero entries in a countable set $S \subseteq \rr d$.
The operator defined by $U(a)(\xi)$ acting on $z$, evaluated at index $\la \in \rr d$, is thus
$$
(U(a)(\xi) \cdot z)_{\la} = \sum_{\la' \in S} U(a)(\xi)_{\lambda,\lambda'} z_{\la'} = \sum_{\la' \in S} a_{\lambda'-\lambda} (\xi-\lambda') z_{\la'}
$$
which is zero unless $\la \in \La + S$, which is a countable set. Thus $U(a)(\xi)$ maps a sequence with countably many nonzero entries into another such sequence.
Since $a(x,D) e^{2 \pi i x \cdot \la} = a(x,\la) e^{2 \pi i x \cdot \la}$ (cf. \cite{Shubin5}) we have
\begin{equation}\label{innerproduct1}
\begin{aligned}
(a(x,D)f,g)_B & = \sum_{\la,\la' \in \rr d} \mathscr M_x( a(x,\lambda) e^{2 \pi i x
\cdot (\la-\la')} ) \ \wh f_\la \ \overline{\wh g}_{\la'} \\
& = \sum_{\la,\la' \in \rr d} a_{\la'-\la} (\la) \ \wh f_\la
\ \overline{\wh g}_{\la'} \\
& = ( U(a)(0) \cdot \mathscr F_B R f, \mathscr F_B R g )_{l^2}, \quad f,g \in TP(\rr d).
\end{aligned}
\end{equation}
By means of \eqref{bohrfourierunitary1} we thus obtain for $a \in APS_{\rho,\delta}^m$
\begin{equation}\label{matrixnorm1}
\begin{aligned}
\| U(a)(0) \|_{\Lop(l_s^2,l_{s-m}^2)}
& = \sup_{\| x \|_{l_s^2} \leq 1, \ \| y \|_{l_{m-s}^2} \leq 1} (U(a)(0) \ x ,y)_{l^2} \\
& = \sup_{\| f \|_{H^s(\rrb d)} \leq 1, \ \| g \|_{H^{m-s}(\rrb d)} \leq 1} ( a(x,D) f ,g)_{B} \\
& = \| a(x,D) \|_{\Lop(H^s(\rrb d), H^{s-m}(\rrb d))} \\
& = \| a(x,D) \|_{\Lop(H^s(\rr d), H^{s-m}(\rr d))} < \infty, \quad s \in \ro,
\end{aligned}
\end{equation}
where the last equality is a consequence of \eqref{apsymbkont1} and \eqref{normlikhet1} (see e.g. \cite[Cor.~1]{Wahlberg1}).
Moreover, \eqref{innerproduct1} gives
\begin{equation}\label{matrixpositivity1}
a(x,D) \geq 0 \quad \mbox{on} \quad TP(\rr d) \quad \Longleftrightarrow \quad U(a)(0) \geq 0 \quad  \mbox{on} \quad l_f^2.
\end{equation}

The operator-valued function $\xi \mapsto U(a)(\xi)$ may be used to define a Fourier multiplier operator
\begin{equation}\label{kvantvekt0}
U(a)(D) F(x) = \int_{\rr d} e^{2 \pi i \xi \cdot x} \ U(a)(\xi) \cdot \wh F(\xi) {\,} d\xi
\end{equation}
acting on vector-valued functions
\begin{equation}\nonumber
\rr d \ni x \mapsto ( F_\la (x) )_{\la \in \La},
\end{equation}
where initially we let $F(x) = (F_\la(x))_{\la \in \La} \in \mathscr S(\rr d,l_f^2)$.
We denote the map $a(x,D) \mapsto U(a)(D)$ by
\begin{equation}\label{gladyshevrepr1}
\wt U(a(x,D)) = U(a)(D), \quad a \in APS_{\rho,\delta}^m.
\end{equation}
Since
\begin{equation}\label{onequant1}
U(1)(\xi)_{\la,\la'} = \delta_{(\la'-\la)} = I_{l^2}, \quad \xi \in \rr d,
\end{equation}
we have $\wt U(I)(D)=I$ where $I$ denotes the identity operator, both on $\mathscr S(\rr d)$ and on $\mathscr S(\rr d,l_f^2)$.

Next we study continuity and growth properties of the operator-valued function $\xi \mapsto U(a)(\xi)$.
The following result improves \cite[Prop.~3]{Wahlberg1}.

\begin{prop}\label{operatorkont1}
If $a \in APS_{\rho,\delta}^m$ then we have for any $s \in \ro$
\begin{align}
& U(a) \in C^\infty (\rr d, \Lop(l_s^2,l_{s-m}^2)),  \label{opkont1a} \\
& C_s^{-1} {\eabs \xi}^{-|s|-|m-s|} \leq \| U(a)(\xi) \|_{\Lop(l_{s}^2,l_{s-m}^2)} \leq C_s {\eabs
\xi}^{|s|+|m-s|}, \quad C_s >0. \label{opkont1b}
\end{align}
If $m \leq 0$ then we have the isometry
\begin{equation}\label{operatorisom1}
\| U(a)(\xi) \|_{\Lop(l^2)} = \| U(a)(0) \|_{\Lop(l^2)}, \quad \xi \in \rr d.
\end{equation}
\end{prop}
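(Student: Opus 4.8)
The plan is to exploit the conjugation identity \eqref{translationsymbol1} together with the kernel formula \eqref{Udef} to relate $U(a)(\xi)$ to $U(T_{0,-\xi}a)(0)$, and then to bring in the norm identity \eqref{matrixnorm1} and the $L^2$–$B^2$ norm equality \eqref{normlikhet1}. First I would observe that from \eqref{Udef} one has $U(a)(\xi)_{\lambda,\lambda'} = a_{\lambda'-\lambda}(\xi-\lambda')$, while $U(T_{0,-\xi}a)(0)_{\lambda,\lambda'} = (T_{0,-\xi}a)_{\lambda'-\lambda}(-\lambda') = a_{\lambda'-\lambda}(\xi-\lambda')$, so in fact $U(a)(\xi) = U(T_{0,-\xi}a)(0)$ as operators (on sequences with countably many nonzero entries). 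Hence the $\xi$-dependence of the operator-valued symbol is entirely captured by translation of the scalar symbol in the frequency variable. Since $T_{0,-\xi}a \in APS_{\rho,\delta}^m$ whenever $a \in APS_{\rho,\delta}^m$ (the estimates \eqref{hormclass1} are translation invariant in $\xi$ only up to the weight $\eabs{\xi}$, which is the source of the polynomial bounds in \eqref{opkont1b}), this already gives \eqref{opkont1a} and \eqref{opkont1b} via \eqref{matrixnorm1}; but for the isometry \eqref{operatorisom1} I need the sharper observation that when $m \le 0$, translating in $\xi$ does not change the operator norm at all.

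For \eqref{operatorisom1}, combine the above identity with \eqref{matrixnorm1} taken at $s=0$: for any $b \in APS_{\rho,\delta}^m$ with $m \le 0$,
\begin{equation}\nonumber
\| U(b)(0) \|_{\Lop(l^2)} = \| b(x,D) \|_{\Lop(B^2)} = \| b(x,D) \|_{\Lop(L^2)},
\end{equation}
the last step by \eqref{normlikhet1} (valid since $m \le 0$ forces $b(x,D)$ bounded; more precisely one applies \eqref{normlikhet1} after noting $APS_{\rho,\delta}^m \subseteq APS_{\rho,\delta}^0$ for $m \le 0$). Apply this with $b = T_{0,-\xi}a$ and use \eqref{translationsymbol1}, which says $(T_{0,-\xi}a)(x,D) = M_{-\xi} \circ a(x,D) \circ M_\xi$. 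Since $M_\xi$ is a unitary operator on $L^2(\rr d)$ (multiplication by a unimodular function), conjugation by it preserves the $\Lop(L^2)$ operator norm, so $\| (T_{0,-\xi}a)(x,D) \|_{\Lop(L^2)} = \| a(x,D) \|_{\Lop(L^2)}$. Chaining the equalities gives
\begin{equation}\nonumber
\| U(a)(\xi) \|_{\Lop(l^2)} = \| U(T_{0,-\xi}a)(0) \|_{\Lop(l^2)} = \| (T_{0,-\xi}a)(x,D) \|_{\Lop(L^2)} = \| a(x,D) \|_{\Lop(L^2)} = \| U(a)(0) \|_{\Lop(l^2)},
\end{equation}
which is \eqref{operatorisom1}.

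The smoothness in \eqref{opkont1a} and the two-sided bound \eqref{opkont1b} require a little more care. For the upper bound in \eqref{opkont1b}, I would estimate $\| U(a)(\xi)\|_{\Lop(l_s^2,l_{s-m}^2)} = \| (T_{0,-\xi}a)(x,D)\|_{\Lop(H^s(\rr d),H^{s-m}(\rr d))}$ using \eqref{matrixnorm1}, and then track how the seminorms of $T_{0,-\xi}a$ in $APS_{\rho,\delta}^m$ grow with $\xi$: replacing $\eabs{\xi+\eta}$ by $\eabs{\xi}\eabs{\eta}$ (Peetre's inequality) costs a factor $\eabs{\xi}^{|m|+\rho|\alpha|+\delta|\beta|}$, but combined with the way Sobolev orders enter one extracts the exponent $|s|+|m-s|$; the lower bound follows from the upper bound applied to the inverse direction, i.e. writing $U(a)(0) = U(T_{0,\xi}(T_{0,-\xi}a))(0)$ and estimating backwards. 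For $C^\infty$ smoothness I would differentiate the kernel formula \eqref{Udef} in $\xi$ — each $\partial_\xi$ lands on $a_{\lambda'-\lambda}(\xi-\lambda')$, i.e. on $\partial_\xi a$ evaluated at the shifted point — so $\partial_\xi^\gamma U(a)(\xi) = U(\partial_\xi^\gamma a)(\xi)$ and $\partial_\xi^\gamma a \in APS_{\rho,\delta}^{m-\rho|\gamma|}$, whence the derivatives are again operator-norm bounded by the same argument, giving continuity of all derivatives by a standard difference-quotient / dominated-convergence argument in the Bochner sense. \textbf{The main obstacle} I anticipate is not the isometry — which is essentially a one-line consequence of unitarity of $M_\xi$ and \eqref{normlikhet1} — but rather pinning down the precise exponent $|s|+|m-s|$ in \eqref{opkont1b}: one must be careful that the polynomial loss is genuinely independent of $\rho,\delta$ and matches on both sides, and that the lower bound is not vacuous (this uses that $a(x,D)$ is genuinely nonzero, which is implicit, or else both sides vanish and the inequality is trivial).
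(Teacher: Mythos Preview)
Your proof of \eqref{operatorisom1} and of \eqref{opkont1a} matches the paper's argument essentially line for line: the key identities $U(a)(\xi)=U(T_{0,-\xi}a)(0)$ and $\partial_\xi^\alpha U(a)(\xi)=U(\partial_\xi^\alpha a)(\xi)$, together with unitarity of $M_\xi$ on $L^2$, are exactly what the paper uses.

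For \eqref{opkont1b}, however, you take a detour that the paper avoids, and this detour is precisely the ``main obstacle'' you flag. After arriving (correctly, via \eqref{matrixnorm1}) at
\[
\| U(a)(\xi)\|_{\Lop(l_s^2,l_{s-m}^2)} = \| (T_{0,-\xi}a)(x,D)\|_{\Lop(H^s,H^{s-m})},
\]
you propose to control the right-hand side by tracking the growth in $\xi$ of the $APS_{\rho,\delta}^m$ seminorms of $T_{0,-\xi}a$. This is where the argument becomes unwieldy: Peetre's inequality on the symbol seminorms produces exponents of the form $|m-\rho|\alpha|+\delta|\beta||$, which depend on $\alpha,\beta,\rho,\delta$, and feeding these into a Calder\'on--Vaillancourt-type bound gives a polynomial in $\eabs\xi$ whose exponent is not obviously $|s|+|m-s|$. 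You would get \emph{some} polynomial bound, but not the stated one.

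The paper sidesteps the symbol-seminorm route entirely. It stays at the operator level: using \eqref{translationsymbol1} one has $(T_{0,-\xi}a)(x,D)=M_{-\xi}\,a(x,D)\,M_\xi$, so
\[
\| (T_{0,-\xi}a)(x,D)\|_{\Lop(H^s,H^{s-m})}
= \sup_{\|M_{-\xi}f\|_{H^s}\le 1,\ \|M_{-\xi}g\|_{H^{m-s}}\le 1} |(a(x,D)f,g)_{L^2}|,
\]
and then one applies the elementary modulation estimate $\|f\|_{H^s}\le C_s\eabs\xi^{|s|}\|M_{-\xi}f\|_{H^s}$ (Peetre on the Fourier side) to $f$ and $g$ separately. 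This gives the upper bound with exponent exactly $|s|+|m-s|$, independent of $\rho,\delta$. The lower bound follows by the same trick in reverse, writing $U(a)(0)=U(T_{0,-\xi}a)(-\xi)$. So the missing idea is: don't estimate the translated symbol, estimate the modulated Sobolev norms.
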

\begin{proof}
By \cite[Prop.~3]{Wahlberg1} we have $U(a) \in C (\rr d, \Lop(l_s^2,l_{s-m}^2))$.
Since $\pddm \xi \alpha (U(a))(\xi) = U(\pddm \xi \alpha a)(\xi)$ (cf. \cite{Wahlberg1}) and $\pddm \xi \alpha a \in APS_{\rho,\delta}^{m-\rho|\alpha|} \subseteq APS_{\rho,\delta}^{m}$, the result \eqref{opkont1a} follows.

In order to prove \eqref{opkont1b}, let $\xi \in \rr d$.
Since $U(a)(\xi)=U(T_{0,-\xi} a)(0)$ we get using \eqref{translationsymbol1} and \eqref{matrixnorm1}
\begin{equation}\nonumber
\begin{aligned}
\| U(a)(\xi)\|_{\Lop(l_{s}^2,l_{s-m}^2)} & = \| ( T_{0,-\xi} a)(x,D) \|_{\Lop(H^s(\rr d),H^{s-m}(\rr d))} \\
& = \sup_{\| f \|_{H^s} \leq 1, \ \| g \|_{H^{m-s}} \leq 1} \left| (M_{-\xi} \ a(x,D) \ M_\xi f,g)_{L^2} \right| \\
& = \sup_{\| M_{-\xi} f \|_{H^s} \leq 1, \ \| M_{-\xi} g \|_{H^{m-s}} \leq 1} \left| (a(x,D) f,g)_{L^2} \right|.
\end{aligned}
\end{equation}
Since $\| f \|_{H^s(\rr d)} \leq C_s {\eabs \xi}^{|s|} \| M_{-\xi} f \|_{H^s(\rr d)}$ for some $C_s>0$ we obtain
\begin{equation}\nonumber
\begin{aligned}
& \| U(a)(\xi)\|_{\Lop(l_{s}^2,l_{s-m}^2)} = C_s {\eabs \xi}^{|s|} C_{m-s} {\eabs \xi}^{|m-s|} \\
& \times \sup_{\| M_{-\xi} f \|_{H^s} \leq 1, \ \| M_{-\xi} g \|_{H^{m-s}} \leq 1} \left| ( a(x,D) C_s^{-1} {\eabs \xi}^{-|s|} f, C_{m-s}^{-1} {\eabs \xi}^{-|m-s|} g)_{L^2} \right| \\
& \leq C {\eabs \xi}^{|s|+ |m-s|} \| a(x,D) \|_{\Lop(H^{s},H^{s-m})} = C {\eabs \xi}^{|s|+ |m-s|} \| U(a)(0) \|_{\Lop(l_{s}^2,l_{s-m}^2)},
\end{aligned}
\end{equation}
which proves the upper bound \eqref{opkont1b}. The lower bound follows from
\begin{equation}\nonumber
\begin{aligned}
\| U(a)(0)\|_{\Lop(l_{s}^2,l_{s-m}^2)} & = \| U(T_{0,-\xi} a)(-\xi)\|_{\Lop(l_{s}^2,l_{s-m}^2)} \\
& \leq C {\eabs \xi}^{|s|+ |m-s|} \| U(T_{0,-\xi} a)(0) \|_{\Lop(l_{s}^2,l_{s-m}^2)} \\
& = C {\eabs \xi}^{|s|+ |m-s|} \| U(a)(\xi) \|_{\Lop(l_{s}^2,l_{s-m}^2)}.
\end{aligned}
\end{equation}
Finally, if $m \leq 0$ then we have for $\xi \in \rr d$
\begin{equation}\nonumber
\begin{aligned}
\| U(a)(\xi)\|_{\Lop(l^2)} & = \| ( T_{0,-\xi} a)(x,D) \|_{\Lop(L^2(\rr d))} \\
& = \sup_{\| f \|_{L^2} \leq 1, \ \| g \|_{L^2} \leq 1} \left| (M_{-\xi} \ a(x,D) \ M_\xi f,g)_{L^2} \right| \\
& = \sup_{\| M_{-\xi} f \|_{L^2} \leq 1, \ \| M_{-\xi} g \|_{L^2} \leq 1} \left| (a(x,D) f,g)_{L^2} \right| \\
& = \| a(x,D) \|_{\Lop(l^2)} = \| U(a)(0)\|_{\Lop(l^2)},
\end{aligned}
\end{equation}
proving \eqref{operatorisom1}.
\end{proof}

\begin{cor}\label{sobolevcorollary1}
If $a \in APS_{\rho,\delta}^m$ then for any $s\in \ro$
\begin{equation}\label{sobolevkont1}
U(a)(D): \ H^{s}(\rr d, l_{s}^2) \ \mapsto \ H^{s-|s|-|m-s|}(\rr d, l_{s-m}^2)
\end{equation}
continuously.
If $m \leq 0$ then $U(a)(D) \in \Lop(L^2(\rr d, l^2))$.
\end{cor}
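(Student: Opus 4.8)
The plan is to deduce the mapping property \eqref{sobolevkont1} directly from the two estimates in Proposition \ref{operatorkont1}, treating $U(a)(D)$ as a Fourier multiplier with operator-valued symbol $\xi \mapsto U(a)(\xi)$. First I would recall from \eqref{kvantvekt0} that for $F \in \mathscr S(\rr d, l_f^2)$ we have $\widehat{U(a)(D)F}(\xi) = U(a)(\xi) \cdot \widehat F(\xi)$, so that the target Sobolev--Besicovitch norm is
\begin{equation}\nonumber
\| U(a)(D) F \|_{H^{s-|s|-|m-s|}(\rr d, l_{s-m}^2)}^2 = \int_{\rr d} \| U(a)(\xi) \cdot \widehat F(\xi) \|_{l_{s-m}^2}^2 \, {\eabs \xi}^{2(s-|s|-|m-s|)} \, d\xi.
\end{equation}
Applying the pointwise operator bound $\| U(a)(\xi) \|_{\Lop(l_s^2, l_{s-m}^2)} \leq C_s {\eabs \xi}^{|s|+|m-s|}$ from \eqref{opkont1b} inside the integrand gives
\begin{equation}\nonumber
\| U(a)(\xi) \cdot \widehat F(\xi) \|_{l_{s-m}^2}^2 \, {\eabs \xi}^{2(s-|s|-|m-s|)} \leq C_s^2 \| \widehat F(\xi) \|_{l_s^2}^2 \, {\eabs \xi}^{2s},
\end{equation}
and integrating over $\xi$ yields $\| U(a)(D) F \|_{H^{s-|s|-|m-s|}(\rr d, l_{s-m}^2)} \leq C_s \| F \|_{H^s(\rr d, l_s^2)}$. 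This proves continuity on the dense subspace $\mathscr S(\rr d, l_f^2)$, and since the estimate is a norm bound, $U(a)(D)$ extends by density to a bounded operator between the stated vector-valued Sobolev spaces.

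For the second assertion, when $m \leq 0$ one takes $s = 0$: then $|s| = 0$ and $|m-s| = |m| = -m$, so the exponent $s - |s| - |m-s| = m \leq 0$. Since ${\eabs \xi}^{2m} \leq 1$, the estimate above with $s=0$ gives $\int_{\rr d} \| U(a)(\xi) \cdot \widehat F(\xi) \|_{l^2}^2 \, d\xi \leq \int_{\rr d} \| U(a)(\xi) \cdot \widehat F(\xi) \|_{l^2}^2 \, {\eabs \xi}^{2m} \, d\xi \leq C \| F \|_{L^2(\rr d, l^2)}^2$; in fact one can use the sharper isometry \eqref{operatorisom1} together with Plancherel to get $\| U(a)(D) F \|_{L^2(\rr d, l^2)} \leq \| U(a)(0) \|_{\Lop(l^2)} \| F \|_{L^2(\rr d, l^2)}$ directly, so $U(a)(D) \in \Lop(L^2(\rr d, l^2))$.

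The only point requiring a little care — and the place I expect to be the main obstacle — is justifying that the Bochner integral defining $U(a)(D)F$ makes sense and that taking the Fourier transform commutes with it, i.e. that $\widehat{U(a)(D)F} = U(a)(\cdot) \widehat F(\cdot)$ as an identity in $L^2(\rr d, l_{s-m}^2)$ for $F$ in the dense class. This follows from the smoothness and polynomial growth of $\xi \mapsto U(a)(\xi)$ in $\Lop(l_s^2, l_{s-m}^2)$ established in \eqref{opkont1a}--\eqref{opkont1b}, together with the rapid decay of $\widehat F$: strong measurability of $\xi \mapsto U(a)(\xi)\widehat F(\xi)$ holds because $U(a)(\cdot)$ is continuous and $\widehat F$ takes values in a fixed finite-dimensional (hence separable) subspace, and integrability of its $l_{s-m}^2$-norm against any polynomial weight is immediate from $\| U(a)(\xi)\widehat F(\xi)\|_{l_{s-m}^2} \leq C_s {\eabs \xi}^{|s|+|m-s|} \|\widehat F(\xi)\|_{l_s^2}$ and $\widehat F \in \mathscr S(\rr d, l_f^2)$. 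The commutation with $\mathscr F$ then follows by approximating the Bochner integral by Riemann sums, or by pairing against test elements of $l_{m-s}^2$ and reducing to the scalar Fourier inversion formula.
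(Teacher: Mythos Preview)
Your approach is essentially identical to the paper's: compute the target Sobolev norm via Plancherel, insert the pointwise operator bound \eqref{opkont1b}, and extend by density from $\mathscr S(\rr d,l_f^2)$; for $m\leq 0$ the paper likewise invokes \eqref{operatorisom1} together with Plancherel on $L^2(\rr d,l^2)$.

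One small slip: in your $m\leq 0$ paragraph, the chain
\[
\int_{\rr d}\|U(a)(\xi)\widehat F(\xi)\|_{l^2}^2\,d\xi \;\leq\; \int_{\rr d}\|U(a)(\xi)\widehat F(\xi)\|_{l^2}^2\,\eabs{\xi}^{2m}\,d\xi
\]
is backwards (since $\eabs{\xi}^{2m}\leq 1$), and in any case the general estimate at $s=0$ lands you in $H^{m}(\rr d,l_{-m}^2)$, not $L^2(\rr d,l^2)$, so it does not by itself give $L^2\to L^2$ boundedness. Your immediate recovery via \eqref{operatorisom1} is the correct route and is exactly what the paper does; just drop the incorrect inequality. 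The extra care you take with Bochner integrability and the identity $\widehat{U(a)(D)F}=U(a)(\cdot)\widehat F(\cdot)$ is not spelled out in the paper but is a reasonable addition.
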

\begin{proof}
Let $F \in \mathscr S(\rr d, l_{f}^2)$. We obtain using \eqref{opkont1b}
\begin{equation}\nonumber
\begin{aligned}
& \| U(a)(D)F \|_{H^{s-|s|-|m-s|}(\rr d, l_{s-m}^2)}^2 \\
& = \int_{\rr d} \| U(a)(\xi) \cdot \wh F(\xi) \|_{l_{s-m}^2}^2 {\eabs \xi}^{2(s-|s|-|m-s|)} d\xi \\
& \leq \int_{\rr d} \| U(a)(\xi) \|_{\Lop(l_{s}^2,l_{s-m}^2)}^2 \| \wh F(\xi) \|_{l_{s}^2}^2 {\eabs \xi}^{2(s-|s|-|m-s|)} d\xi \\
& \leq C \| F \|_{H^{s}(\rr d, l_{s}^2)}^2.
\end{aligned}
\end{equation}
The density of $\mathscr S(\rr d, l_{f}^2)$ in $H^{s}(\rr d, l_{s}^2)$ now proves \eqref{sobolevkont1}. Finally, if $m \leq 0$ then $U(a)(D) \in \Lop(L^2(\rr d, l^2))$ follows from \eqref{operatorisom1} and Plancherel's theorem for $L^2(\rr d,l^2)$.
\end{proof}

Next we prove a result that simplifies the proofs of \cite[Prop.~4 and Thm.~2]{Wahlberg1}.

\begin{lem}\label{simplification1}
If $a \in APS_{\rho,\delta}^m$ then for any $\xi \in \rr d$ and any $s \in \ro$
\begin{equation}\label{Uekviv1}
U(a)(\xi) = (\mathscr F_{B} R M_{-\xi}) \ a(x,D) \ (\mathscr F_{B} R M_{-\xi})^*: \ l_s^2 \mapsto l_{s-m}^2
\end{equation}
is continuous.
\end{lem}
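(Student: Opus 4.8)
The plan is to obtain \eqref{Uekviv1} by assembling three facts already available: the translation identity $U(a)(\xi)=U(T_{0,-\xi}a)(0)$ (used in the proof of Proposition~\ref{operatorkont1}), the quadratic-form formula \eqref{innerproduct1} for $U(b)(0)$, and the conjugation rule \eqref{translationsymbol1}, together with the unitarity \eqref{bohrfourierunitary1} of $\mathscr F_B$ on the Sobolev--Besicovitch scale. The first step is to record the elementary identities $RM_{-\xi}=M_\xi R$ and $R^2=\mathrm{id}$, from which it follows that, with respect to the pairings $(\cdot,\cdot)_{l^2}$ and $(\cdot,\cdot)_B$ (using that $\mathscr F_B$ intertwines them via Parseval, that $R$ is self-adjoint, and that $M_{-\xi}$ is $(\cdot,\cdot)_B$-adjoint to $M_\xi$), the formal adjoint of $\mathscr F_B RM_{-\xi}$ is $M_\xi R\mathscr F_B^{-1}$, and hence
\[
(\mathscr F_B RM_{-\xi})^*\,\mathscr F_B Rf = M_\xi f,\qquad f\in TP(\rr d).
\]
Here $\mathscr F_B^{-1}\colon l_s^2\to H^s(\rrb d)$ is unitary by \eqref{bohrfourierunitary1}, $R$ is unitary on each $H^s(\rrb d)$, and $M_{\pm\xi}$ is bounded on each $H^s(\rrb d)$ (the estimate $\|f\|_{H^s}\le C_s\eabs{\xi}^{|s|}\|M_{-\xi}f\|_{H^s}$ used in the proof of Proposition~\ref{operatorkont1}); thus the right-hand side of \eqref{Uekviv1} is a composition of the bounded maps $l_s^2\to H^s(\rrb d)$, $a(x,D)\colon H^s(\rrb d)\to H^{s-m}(\rrb d)$ (bounded by \eqref{matrixnorm1}), and $H^{s-m}(\rrb d)\to l_{s-m}^2$, hence bounded $l_s^2\to l_{s-m}^2$.

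Next I would verify \eqref{Uekviv1} on the dense subspace $l_f^2=\mathscr F_B R(TP(\rr d))$ of $l_s^2$. For $f,g\in TP(\rr d)$, the displayed identity above and the fact that $M_{-\xi}$ is $(\cdot,\cdot)_B$-adjoint to $M_\xi$ give
\[
\bigl((\mathscr F_B RM_{-\xi})\,a(x,D)\,(\mathscr F_B RM_{-\xi})^*\mathscr F_B Rf,\ \mathscr F_B Rg\bigr)_{l^2}
= \bigl(a(x,D)M_\xi f,\ M_\xi g\bigr)_B
= \bigl(M_{-\xi}a(x,D)M_\xi f,\ g\bigr)_B.
\]
By \eqref{translationsymbol1} the last expression equals $\bigl((T_{0,-\xi}a)(x,D)f,g\bigr)_B$, which by \eqref{innerproduct1} (applied to the symbol $T_{0,-\xi}a\in APS_{\rho,\delta}^m$) equals $\bigl(U(T_{0,-\xi}a)(0)\,\mathscr F_B Rf,\ \mathscr F_B Rg\bigr)_{l^2}=\bigl(U(a)(\xi)\,\mathscr F_B Rf,\ \mathscr F_B Rg\bigr)_{l^2}$. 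Since $\{\mathscr F_B Rf:f\in TP(\rr d)\}$ exhausts $l_f^2$, the two sides of \eqref{Uekviv1} agree as operators on $l_f^2$.

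Finally I would pass to all of $l_s^2$ by density: the left-hand side $U(a)(\xi)$ is bounded $l_s^2\to l_{s-m}^2$ by Proposition~\ref{operatorkont1}, the right-hand side is bounded $l_s^2\to l_{s-m}^2$ by the factorisation in the first paragraph, and the two coincide on the dense subspace $l_f^2$; hence they coincide on $l_s^2$, which yields both the identity \eqref{Uekviv1} and its continuity. I expect the only point requiring care to be the interpretation of the adjoint $(\mathscr F_B RM_{-\xi})^*$ — it must be read through the $l^2$--$B^2$ (equivalently $l_s^2$--$l_{-s}^2$ and $H^s$--$H^{-s}$) dualities rather than as a Hilbert-space adjoint on the weighted spaces — together with the boundedness of $M_{\pm\xi}$ on the Sobolev--Besicovitch scale; both are routine once the convention is fixed.
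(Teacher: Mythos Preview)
Your proposal is correct and follows essentially the same route as the paper: both use the identity $U(a)(\xi)=U(T_{0,-\xi}a)(0)$, the quadratic-form computation \eqref{innerproduct1} to obtain the factorisation $U(b)(0)=\mathscr F_B R\,b(x,D)\,(\mathscr F_B R)^*$, and then \eqref{translationsymbol1} to pass from $(T_{0,-\xi}a)(x,D)$ to $M_{-\xi}a(x,D)M_\xi$. The paper is simply terser---it invokes Proposition~\ref{operatorkont1} for the continuity and reads off the factorisation \eqref{unitaryequiv1} directly from \eqref{innerproduct1}---whereas you spell out the density argument and the boundedness of the right-hand side separately; these are welcome elaborations but not a different strategy.
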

\begin{proof}
From Proposition \ref{operatorkont1} we know that $U(a)(\xi): l_s^2 \mapsto l_{s-m}^2$ is continuous for any $s \in \ro$ and any $\xi \in \rr d$,
and \eqref{innerproduct1} gives the factorization
\begin{equation}\label{unitaryequiv1}
U(a)(0) = \mathscr F_{B} R \ a(x,D) \ (\mathscr F_{B} R)^*.
\end{equation}
Combined with \eqref{translationsymbol1} this gives
\begin{equation}\nonumber
\begin{aligned}
U(a)(\xi) & = U(T_{0,-\xi} a)(0) = \mathscr F_{B} R \ (T_{0,-\xi} a)(x,D) \ (\mathscr F_{B} R)^* \\
& = (\mathscr F_{B} R M_{-\xi}) \ a(x,D) \ (\mathscr F_{B} R M_{-\xi})^*.
\end{aligned}
\end{equation}
\end{proof}

In order to formulate the following corollary of \eqref{unitaryequiv1}, we need a result by Shubin (cf. \cite[Thm.~3.4]{Shubin1}, \cite[Thm.~3.2 and Cor.~4.1]{Shubin4} and \cite[Thm.~18.1.7 and p.~94]{Hormander3}).
If $APL_{\rho,\delta}^m$ then there exists a formal adjoint operator $a(x,D)^+ = a^+(x,D) \in APL_{\rho,\delta}^m$ with symbol $a^+ \in APS_{\rho,\delta}^m$, that satisfies
\begin{equation}\nonumber
\begin{aligned}
(a(x,D)f,g)_{L^2} & = (f,a(x,D)^+ g)_{L^2}, \quad f,g \in \mathscr S(\rr d), \\
(a(x,D)f,g)_{B} & = (f,a(x,D)^+ g)_{B}, \quad f,g \in TP(\rr d).
\end{aligned}
\end{equation}
We denote by $U(a)(\xi)^+_{\la,\la'} = \overline{U(a)(\xi)}_{\la',\la}$ the Hermitean conjugation of the kernel $U(a)(\xi)$.
We use $l_f^2$ as a common domain for all operators $\{ U(a)(0), \ a \in APS_{\rho,\delta}^\infty \}$.

\begin{cor}\label{unitaryequivalent1}
The operators
\begin{equation}\nonumber
U(a)(0): l_s^2 \mapsto l_{s-m}^2 \quad \mbox{and} \quad a(y,D): H^s(\rrb d) \mapsto H^{s-m}(\rrb d)
\end{equation}
are unitarily equivalent for any $s \in \ro$. Considering the algebra $APL_{\rho,\delta}^\infty$ either as a subspace in $L(B^2(\rr d))$ or in $L(L^2(\rr d))$, the map
$$
a(y,D) \mapsto U(a)(0)
$$
is a faithful $^+$-representation of $APL_{\rho,\delta}^\infty$ on $L(l^2(\rr d))$. It is positivity preserving in the sense that $a(x,D) \geq 0$ on $TP(\rr d)$ if and only if $U(a)(0) \geq 0$ on $l_{f}^2$.
\end{cor}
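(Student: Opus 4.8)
The plan is to read everything off the operator identity \eqref{unitaryequiv1} of Lemma~\ref{simplification1}, that is $U(a)(0) = \mathscr F_B R\, a(x,D)\, (\mathscr F_B R)^*$, together with the preliminary facts on the spaces $H^s(\rrb d)$. First I would observe that the reflection $R$ is unitary on $H^s(\rrb d)$ for every $s \in \ro$: since $\wh{(Rf)}_\la = \wh f_{-\la}$ and $\eabs \la = \eabs{-\la}$, the map $R$ merely permutes the Bohr--Fourier coefficients and preserves the $H^s(\rrb d)$ norm on $TP(\rr d)$, hence extends to a unitary (with $R^2 = I$). Combined with \eqref{bohrfourierunitary1} this shows that $\mathscr F_B R\colon H^s(\rrb d) \to l_s^2$ is unitary for every $s \in \ro$, the same underlying map for all $s$. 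Therefore \eqref{unitaryequiv1} exhibits $U(a)(0)\colon l_s^2 \to l_{s-m}^2$ as the conjugate, by the single unitary $\mathscr F_B R$, of $a(x,D)\colon H^s(\rrb d) \to H^{s-m}(\rrb d)$; this is the asserted unitary equivalence, and continuity of both operators is already contained in Proposition~\ref{operatorkont1} and \eqref{matrixnorm1}.

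Next I would verify the representation properties, using $l_f^2$ as common domain on the sequence side and $TP(\rr d)$ on the Besicovitch side and noting that $\mathscr F_B R$ restricts to a bijection $TP(\rr d) \to l_f^2$. Linearity of $a(x,D) \mapsto U(a)(0)$ is immediate from \eqref{Udef}, since $a \mapsto a_\la(\xi)$ is linear; preservation of the identity is \eqref{onequant1}. For composition I would use $(\mathscr F_B R)^* \mathscr F_B R = I$ to telescope $U(a)(0)\, U(b)(0) = \mathscr F_B R\, a(x,D)\, b(x,D)\, (\mathscr F_B R)^* = U(a \wpr_0 b)(0)$, the product $a \wpr_0 b$ being well defined by \eqref{apsymbkont1}. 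For the formal adjoint I would conjugate the Besicovitch adjoint identity $(a(x,D)f,g)_B = (f, a^+(x,D)g)_B$ (equivalently the $L^2$ one, with the same symbol $a^+$) by $\mathscr F_B R$, obtaining $(U(a)(0)x,y)_{l^2} = (x, U(a^+)(0)y)_{l^2}$; applying this to Kronecker deltas identifies $U(a^+)(0)$ with the Hermitean conjugate kernel $U(a)(0)^+$, so the map intertwines the two $^+$-operations. Positivity preservation is exactly \eqref{matrixpositivity1}.

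It remains to verify faithfulness. If $U(a)(0) = 0$, then by the unitary equivalence $a(x,D)$ vanishes on $TP(\rr d)$; since $a(x,D) e_\la = a(\cdot,\la)\, e_\la$ for every $\la \in \rr d$, this forces $a(\cdot,\la) \equiv 0$ for all $\la$, so $a \equiv 0$ and $a(x,D) = 0$, and the map $a(x,D) \mapsto U(a)(0)$ is injective. (The same identity shows that $a \leftrightarrow a(x,D)$ is a bijection, so the map is well defined on operators; and it is equally valid whether $APL_{\rho,\delta}^\infty$ is viewed inside $L(B^2(\rr d))$ or inside $L(L^2(\rr d))$.) I do not expect a serious obstacle; the only point requiring genuine care is the common domain used for composition, because $a(x,D)$ does not map $TP(\rr d)$ into itself, so the composition is to be understood on the dense subspace $H^{\infty}(\rrb d) = \bigcap_{s} H^s(\rrb d)$ (carried by $\mathscr F_B R$ onto $\bigcap_{s} l_s^2$), which each $a(x,D)$ maps into itself and which, by Proposition~\ref{operatorkont1}, yields exactly the same operators $U(a)(0)$ as before.
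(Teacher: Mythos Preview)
Your proposal is correct and follows exactly the route the paper intends: the corollary is stated there without proof, as an immediate consequence of the conjugation identity \eqref{unitaryequiv1} together with \eqref{bohrfourierunitary1}, \eqref{onequant1} and \eqref{matrixpositivity1}, and your argument simply supplies those details. Your handling of the one genuine subtlety---that $TP(\rr d)$ and $l_f^2$ are not invariant under the operators, so composition should be read on $H^\infty(\rrb d)$ (respectively $\bigcap_s l_s^2$), where each operator is continuous $l_s^2 \to l_{s-m}^2$ by Proposition~\ref{operatorkont1}---is appropriate and matches how the paper later treats composition in Proposition~\ref{Ucomp1}.
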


We note that $a(x,D) \geq 0$ on $TP(\rr d)$ is equivalent to $a(x,D) \geq 0$ on $\mathscr S(\rr d)$ (cf. \cite{Shubin4} and \cite[Cor.~2]{Wahlberg1}).
As a consequence of Lemma \ref{simplification1} we also obtain the following result which contains \cite[Prop.~4 and Thm.~2]{Wahlberg1}.

\begin{prop}\label{Ucomp1}
If $a \in APS_{\rho,\delta}^m$ and $b \in APS_{\rho,\delta}^n$ then
\begin{equation}\label{Urepr1}
U(a \wpr_0 b)(\xi) = U(a)(\xi) \cdot U(b)(\xi), \quad \xi \in \rr d.
\end{equation}
We have $a(x,D) \geq 0$ on $TP(\rr d)$ if and only if $U(a)(\xi) \geq 0$ on $l_{f}^2$ for all $\xi \in \rr d$.
\end{prop}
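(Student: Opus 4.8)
The plan is to deduce both assertions from the factorization in Lemma~\ref{simplification1} together with the operator properties of $a(x,D)$ already recorded. First I would prove the composition identity \eqref{Urepr1}. By Lemma~\ref{simplification1} we have, for every $\xi \in \rr d$,
\begin{equation}\nonumber
U(a)(\xi) = (\mathscr F_{B} R M_{-\xi}) \, a(x,D) \, (\mathscr F_{B} R M_{-\xi})^*, \qquad U(b)(\xi) = (\mathscr F_{B} R M_{-\xi}) \, b(x,D) \, (\mathscr F_{B} R M_{-\xi})^*,
\end{equation}
and the same formula for $U(a \wpr_0 b)(\xi)$ with $a(x,D) b(x,D)$ in the middle, using \eqref{symbolproduct1}. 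Since $\mathscr F_{B} R M_{-\xi}$ is unitary (from \eqref{bohrfourierunitary1} applied at $s=0$, together with the fact that $R$ and $M_{-\xi}$ are unitary on $B^2(\rr d)$ and preserve $TP(\rr d)$), we have $(\mathscr F_{B} R M_{-\xi})^* (\mathscr F_{B} R M_{-\xi}) = I$ on $TP(\rr d)$, so the inner adjoint/unitary pair cancels when we multiply the two expressions, giving $U(a)(\xi) \cdot U(b)(\xi) = (\mathscr F_{B} R M_{-\xi}) a(x,D) b(x,D) (\mathscr F_{B} R M_{-\xi})^* = U(a \wpr_0 b)(\xi)$. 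One should note here that $\mathscr F_{B} R M_{-\xi}$ maps $TP(\rr d)$ bijectively onto $l_f^2$, which is the common invariant domain on which all these identities are meant to hold, and that $a(x,D) b(x,D)$ is well defined there because $APL_{\rho,\delta}^\infty$ is an algebra on $B^2(\rr d)$ by \eqref{apsymbkont1}; the continuity $l_s^2 \to l_{s-m-n}^2$ of both sides is then automatic from Proposition~\ref{operatorkont1} and \eqref{apsymbkont1}.

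For the positivity equivalence, fix $\xi \in \rr d$ and observe that for $z \in l_f^2$, writing $f = (\mathscr F_{B} R M_{-\xi})^* z \in TP(\rr d)$, Lemma~\ref{simplification1} gives
\begin{equation}\nonumber
(U(a)(\xi) z, z)_{l^2} = \big( a(x,D) (\mathscr F_{B} R M_{-\xi})^* z, (\mathscr F_{B} R M_{-\xi})^* z \big)_{B} = (a(x,D) f, f)_{B}.
\end{equation}
Since $z \mapsto f$ is a bijection of $l_f^2$ onto $TP(\rr d)$, it follows immediately that $U(a)(\xi) \geq 0$ on $l_f^2$ is equivalent to $a(x,D) \geq 0$ on $TP(\rr d)$, and since this equivalence holds for each individual $\xi$ with the \emph{same} right-hand condition, it holds simultaneously for all $\xi \in \rr d$ if and only if it holds for $\xi = 0$, which by \eqref{matrixpositivity1} is equivalent to $a(x,D) \geq 0$ on $TP(\rr d)$. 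Alternatively one invokes the translation identity $U(a)(\xi) = U(T_{0,-\xi} a)(0)$ together with \eqref{matrixpositivity1} and the elementary fact that $a(x,D) \geq 0$ on $TP(\rr d)$ is equivalent to $(T_{0,-\xi} a)(x,D) = M_{-\xi} a(x,D) M_\xi \geq 0$ on $TP(\rr d)$, the latter because $M_\xi$ is a bijection of $TP(\rr d)$ which is unitary for $(\cdot,\cdot)_B$.

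The only subtlety — and the main point requiring care rather than a genuine obstacle — is the bookkeeping of domains: one must make sure the factorization from Lemma~\ref{simplification1}, which is stated as an identity of continuous maps $l_s^2 \to l_{s-m}^2$, is used on the dense common core $l_f^2$ (equivalently $TP(\rr d)$ under $\mathscr F_B R M_{-\xi}$) where the adjoint $(\mathscr F_B R M_{-\xi})^*$ acts as the genuine inverse and where compositions $a(x,D) b(x,D)$ are literally defined pointwise on trigonometric polynomials. Everything else is a routine cancellation of unitaries, and I expect no real difficulty beyond this.
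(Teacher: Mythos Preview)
Your proposal is correct and follows essentially the same approach as the paper: the composition identity is obtained from Lemma~\ref{simplification1} by inserting the identity $(\mathscr F_{B} R M_{-\xi})^* (\mathscr F_{B} R M_{-\xi})=I$ between $a(x,D)$ and $b(x,D)$, and for the positivity statement the paper uses precisely your ``alternative'' route via $U(a)(\xi)=U(T_{0,-\xi}a)(0)$, \eqref{translationsymbol1} and \eqref{matrixpositivity1}. Your first positivity argument (pulling back the quadratic form through the unitary) is a minor variant that yields the same conclusion.
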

\begin{proof}
We obtain from \eqref{apsymbkont1}, Lemma \ref{simplification1} and \eqref{symbolproduct1}
\begin{equation}\nonumber
\begin{aligned}
& U(a \wpr_0 b)(\xi) \\
& = (\mathscr F_{B} R M_{-\xi}) \ a(x,D) \ b(x,D) \ (\mathscr F_{B} R M_{-\xi})^* \\
& = (\mathscr F_{B} R M_{-\xi}) \ a(x,D) \ (\mathscr F_{B} R M_{-\xi})^* \ (\mathscr F_{B} R M_{-\xi}) \ b(x,D) \ (\mathscr F_{B} R M_{-\xi})^* \\
& = U(a)(\xi) \cdot U(b)(\xi), \quad \xi \in \rr d,
\end{aligned}
\end{equation}
which proves \eqref{Urepr1}.
From \eqref{translationsymbol1} we may conclude that $a(x,D) \geq 0$ on $TP(\rr d)$ is equivalent to $(T_{0,-\xi} a)(x,D) \geq 0$ on $TP(\rr d)$ for any $\xi \in \rr d$, which by \eqref{matrixpositivity1} is equivalent to $U(a)(\xi) \geq 0$ on $l_f^2$ for any $\xi \in \rr d$.
\end{proof}

We get consequently the following result. Here we use $\mathscr S(\rr d, l_f^2)$ as a common domain of all operators $\{ U(a)(D), \ a \in APS_{\rho,\delta}^\infty \}$.

\begin{cor}\label{representation1}
Considering the algebra $APL_{\rho,\delta}^\infty$ either as a subspace in $L(B^2(\rr d))$ or in $L(L^2(\rr d))$, the map
\begin{equation}\nonumber
a(x,D) \mapsto \wt U (a(x,D)) = U(a)(D)
\end{equation}
is a faithful $^+$-representation of $APL_{\rho,\delta}^\infty$ on $L( L^2(\rr d, l^2) )$.
It preserves positivity in the sense that $a(x,D) \geq 0$ on $TP(\rr d)$ if and only if $U(a)(D) \geq 0$ on $\mathscr S(\rr d,l_{f}^2)$.
\end{cor}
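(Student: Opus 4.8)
The plan is to verify, one by one, the defining properties of a faithful $^+$-representation, building on Proposition~\ref{Ucomp1}, Corollary~\ref{sobolevcorollary1} and Corollary~\ref{unitaryequivalent1}. First I would check that $\wt U$ is well defined with values in $L(L^2(\rr d, l^2))$: by Corollary~\ref{sobolevcorollary1}, $U(a)(D)$ maps $\mathscr S(\rr d, l_f^2)$ continuously into a vector-valued Sobolev space, hence into $\mathscr S'(\rr d, l^2)$ (and into $L^2(\rr d, l^2)$ when $m \le 0$), so $U(a)(D)$ is an operator on $L^2(\rr d, l^2)$ with the fixed dense domain $\mathscr S(\rr d, l_f^2)$; once a densely defined formal adjoint is produced (see below) $U(a)(D)$ is closable, hence lies in $L(L^2(\rr d, l^2))$. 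Linearity of $a \mapsto \wt U(a(x,D))$ is immediate from \eqref{Udef} and \eqref{kvantvekt0}, $\wt U(I)(D) = I$ is \eqref{onequant1}, and for composition I would use \eqref{apsymbkont1} to note $a \wpr_0 b \in APS_{\rho,\delta}^{m+n}$ (so $U(a\wpr_0 b)(D)$ lies in the range of $\wt U$) and then Proposition~\ref{Ucomp1}, which gives $U(a \wpr_0 b)(\xi) = U(a)(\xi) \cdot U(b)(\xi)$ for every $\xi$; since the operator-valued symbols obey the polynomial bounds \eqref{opkont1b}, composing the Fourier multipliers on $\mathscr S(\rr d, l_f^2)$ amounts to multiplying the symbols pointwise in $\xi$, yielding $\wt U(a(x,D) b(x,D)) = U(a\wpr_0 b)(D) = U(a)(D)\, U(b)(D) = \wt U(a(x,D))\, \wt U(b(x,D))$.

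For the formal adjoint and injectivity I would argue as follows. Recall (before Corollary~\ref{unitaryequivalent1}) that $a(x,D)^+ = a^+(x,D)$ with $a^+ \in APS_{\rho,\delta}^m$; combining this with the factorization $U(a)(\xi) = (\mathscr F_{B} R M_{-\xi}) \, a(x,D) \, (\mathscr F_{B} R M_{-\xi})^*$ of Lemma~\ref{simplification1} gives the form identity $(U(a)(\xi) w, v)_{l^2} = (w, U(a^+)(\xi) v)_{l^2}$ for $w, v \in l_f^2$, which extends to the relevant weighted sequence spaces by density and \eqref{opkont1b}. Then, for $F, G \in \mathscr S(\rr d, l_f^2)$, Parseval's formula for $L^2(\rr d, l^2)$ gives
\begin{equation}\nonumber
\begin{aligned}
(U(a)(D)F, G)_{L^2(\rr d, l^2)} & = \int_{\rr d} (U(a)(\xi) \wh F(\xi), \wh G(\xi))_{l^2} \, d\xi \\
& = (F, U(a^+)(D) G)_{L^2(\rr d, l^2)},
\end{aligned}
\end{equation}
the integral converging absolutely by \eqref{opkont1b} and the rapid decay of $\wh F, \wh G$; hence $\wt U(a(x,D))^+ = U(a^+)(D) = \wt U(a(x,D)^+)$, and in particular $\mathscr S(\rr d, l_f^2)$ is contained in the domain of the formal adjoint, which closes the gap in the first paragraph. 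For injectivity I would suppose $U(a)(D) = 0$ on $\mathscr S(\rr d, l_f^2)$ and test against $F(x) = \phi(x) z$ with $\phi \in \mathscr S(\rr d)$, $\wh\phi(0) \ne 0$ and $z \in l_f^2$; the continuity \eqref{opkont1a} of $\xi \mapsto U(a)(\xi)$ then forces $U(a)(0) z = 0$ for every $z \in l_f^2$, so $U(a)(0) = 0$, and by the faithfulness part of Corollary~\ref{unitaryequivalent1} we get $a(x,D) = 0$.

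Finally, for positivity: if $a(x,D) \ge 0$ on $TP(\rr d)$ then Proposition~\ref{Ucomp1} gives $U(a)(\xi) \ge 0$ on $l_f^2$ for every $\xi$, and the Parseval identity above with $G = F$ shows $(U(a)(D)F, F)_{L^2(\rr d, l^2)} = \int_{\rr d} (U(a)(\xi) \wh F(\xi), \wh F(\xi))_{l^2} \, d\xi \ge 0$ for $F \in \mathscr S(\rr d, l_f^2)$, each integrand being nonnegative because $\wh F(\xi) \in l_f^2$ (reducing, if necessary, to $F$ a finite sum of simple tensors). Conversely, testing $U(a)(D) \ge 0$ against $F(x) = \phi_n(x) z$ with $z \in l_f^2$ and $|\wh{\phi_n}|^2 \to \delta_0$ weakly, and using the continuity of $\xi \mapsto (U(a)(\xi) z, z)_{l^2}$ from \eqref{opkont1a}, yields $(U(a)(0) z, z)_{l^2} \ge 0$ for all $z \in l_f^2$, i.e. $U(a)(0) \ge 0$ on $l_f^2$, which by \eqref{matrixpositivity1} is equivalent to $a(x,D) \ge 0$ on $TP(\rr d)$. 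The step I expect to require most care is the formal adjoint: one must lift the pointwise-in-$\xi$ identity $U(a)(\xi)^+ = U(a^+)(\xi)$ — between operators that grow in $\xi$ once $m > 0$ — to an identity of formal adjoints of Fourier multipliers on $L^2(\rr d, l^2)$, and verify that $\mathscr S(\rr d, l_f^2)$ is a genuine common domain for the whole algebra; the rest is bookkeeping on results already in hand.
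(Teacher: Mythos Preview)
Your proposal is correct and follows essentially the same route as the paper: composition via Proposition~\ref{Ucomp1}, the pointwise identity $U(a^+)(\xi)=U(a)(\xi)^+$ from Lemma~\ref{simplification1} lifted to $L^2(\rr d,l^2)$ by Parseval, and positivity handled by integrating the pointwise inequality and, for the converse, testing against $F=\phi\otimes z$ with $|\wh\phi|^2$ an approximate $\delta_0$. The only cosmetic difference is in faithfulness, where the paper reads off $U(a)(\xi)_{\la,\la'}\equiv 0$ directly and inverts via the Bohr--Fourier reconstruction \eqref{fourierreconstruction1}, whereas you reduce to $U(a)(0)=0$ and invoke Corollary~\ref{unitaryequivalent1}; both arguments are equivalent in spirit and length.
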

\begin{proof}
For $a \in APS_{\rho,\delta}^m$ and $b \in APS_{\rho,\delta}^n$ we have by \eqref{symbolproduct1} and \eqref{Urepr1}
\begin{equation}\nonumber
\begin{aligned}
\wt U (a(x,D) b(x,D)) & = \wt U(a \wpr_0 b(x,D)) = U(a)(D) \ U(b)(D) \\
& = \wt U(a(x,D) \ \wt U(b(x,D)).
\end{aligned}
\end{equation}
By Lemma \ref{simplification1},
\begin{equation}\nonumber
U(a^+)(\xi) = (\mathscr F_{B} R M_{-\xi}) \ a(x,D)^+ \ (\mathscr F_{B} R M_{-\xi})^* = U(a)(\xi)^+.
\end{equation}
This gives for $F,G \in \mathscr S(\rr d,l_f^2)$
\begin{equation}\nonumber
\begin{aligned}
\left( F, U (a)(D)^+ G \right)_{L^2(\rr d,l^2)}
& = \left( U (a)(D) F, G \right)_{L^2(\rr d,l^2)} \\
& = \iint_{\rr {2d}} e^{2 \pi i x \cdot \xi} (U(a)(\xi) \cdot \wh F(\xi),G(x))_{l^2} \ d \xi dx \\
& = \int_{\rr d} (U(a)(\xi) \cdot \wh F(\xi), \wh G(\xi))_{l^2} \ d \xi \\
& = \iint_{\rr {2d}} ( F(x), e^{2 \pi i x \cdot \xi} U(a)(\xi)^+ \cdot \wh G(\xi))_{l^2} \ d \xi dx \\
& = \left( F, U (a^+)(D) \ G \right)_{L^2(\rr d,l^2)},
\end{aligned}
\end{equation}
that is, $\wt U (a(x,D))^+ = \wt U (a(x,D)^+)$ which proves that $\wt U$ is a $^+$-repres-entation.
If $U(a)(D)=0$ then $U(a)(\xi)_{\la,\la'}=0$ for all $\xi, \la, \la' \in \rr d$, which implies that $a=0$ due to the fact that the Bohr--Fourier inversion formula \eqref{fourierreconstruction1} gives
\begin{equation}\nonumber
a(x,\xi) = \lim_{n \rightarrow \infty} \sum_{\la \in \La} K_n(\la)
\ U(a)(\xi)_{-\la,0} \ e^{2 \pi i \la \cdot x}.
\end{equation}
Thus $a(x,D)=0$ and the representation is faithful.

If $a(x,D) \geq 0$ on $TP(\rr d)$ then $U(a)(\xi) \geq 0$ on $l_{f}^2$ for all $\xi \in \rr d$ by Proposition \ref{Ucomp1}. If $F \in \mathscr S(\rr d,l_{f}^2)$ we thus have
\begin{equation}\nonumber
\left( U (a)(D) F,F \right)_{L^2(\rr d,l^2)} = \int_{\rr d} (U(a)(\xi) \cdot \wh F(\xi), \wh F(\xi) )_{l^2} \ d \xi \geq 0
\end{equation}
which proves that $\wt U$ preserves positivity. Suppose on the other hand that $U (a)(D) \geq 0$ on $\mathscr S(\rr d,l_{f}^2)$. Let $z \in l_f^2$ and pick $\varphi \in
C_c^\infty(\rr d)$ with support in the unit ball such that $\varphi
\geq 0$ and $\| \varphi \|_{L^2}=1$. With
$\varphi_\ep(x)=\ep^{-d/2}\varphi(x/\ep)$ and $F_\ep(x)_\la=\mathscr
F^{-1} \varphi_\ep(x) z_\la$ we then have
\begin{equation}\nonumber
\begin{aligned}
0 \leq \left( U(a)(D) F_\ep, F_\ep \right)_{L^2(\rr d,l^2)} & = \int_{\rr d}
( U(a)(\xi) \cdot z, z)_{l^2} \ \varphi_\ep(\xi)^2 \ d\xi  \\
& \longrightarrow (U(a)(0) \cdot z, z)_{l^2}, \quad \ep
\longrightarrow 0,
\end{aligned}
\end{equation}
where we have used Proposition \ref{operatorkont1} and the shrinking support of
$\varphi_\ep$. Therefore $U(a)(0) \geq 0$ on $l_f^2$ which
implies that $a(x,D) \geq 0$ on $TP(\rr d)$ according to
\eqref{matrixpositivity1}.
\end{proof}

\section{The representation by Coburn, Moyer and Singer}

In this section we always assume that \eqref{rhodeltakrav1} holds.
For $x \in \rr d$ and $a \in APS_{\rho,\delta}^m$ the symbol $a_x$ is defined by
\begin{equation}\nonumber
a_x(y,\xi)=a(x+y,\xi) = (T_{-x,0} a)(y,\xi) \in APS_{\rho,\delta}^m.
\end{equation}
It follows from \eqref{kndef2} and \eqref{kndef3} that
$$
a_x(y,D) = T_{-x} \circ a(y,D) \circ T_x.
$$
Abbreviating $H^s=H^s(\rr d)$, we have
\begin{equation}\label{sobolevtransl1}
\| a_x(y,D) \|_{ \Lop (H^s,H^{s-m}) } = \| a(y,D) \|_{ \Lop (H^s,H^{s-m}) }, \quad x \in \rr d.
\end{equation}

In \cite{Coburn1}, a linear transformation
\begin{equation}\label{representation3}
a(y,D) \mapsto A(a(y,D)) := A
\end{equation}
is defined, such that $A$ is an operator acting on a function of two variables $u: \rr d \times \rr d \mapsto \co$, according to
\begin{equation}\label{Adescr1}
\begin{aligned}
(A u)(x,y) & = \int_{\rr d} e^{2 \pi i \xi \cdot y} a(x+y,\xi) \ \mathscr F_2 u(x,\xi) \ d \xi \\
& = \left( a_x(y,D) \ u(x,\cdot) \right) (y),
\end{aligned}
\end{equation}
where $\mathscr F_2$ denotes partial Fourier transformation in the second $\rr d$ variable.
The operator $A$ is well defined for $a \in APS_{\rho,\delta}^m$, for example if $u \in C_{\rm ap}(\rr d) \otimes \mathscr S(\rr d)$. The study of this transformation is developed further in \cite{Shubin2,Shubin4b}.

In order to prove a result about the continuity of the operator $A$ we need the following lemma.

\begin{lem}\label{hilbertiso1}
For any $s \in \ro$ we have the Hilbert space isomorphism
\begin{equation}\nonumber
B^2(\rr d) \otimes H^s(\rr d) \simeq B^2(\rr d, H^s(\rr d) ).
\end{equation}
\end{lem}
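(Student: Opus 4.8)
The plan is to obtain the isomorphism as the continuous extension of the obvious ``scalar multiplication'' map. Consider the bilinear map $\beta\colon C_{\rm ap}(\rr d)\times H^s(\rr d)\to C_{\rm ap}(\rr d,H^s(\rr d))$, $\beta(f,v)(x)=f(x)\,v$, which is plainly well defined. It induces a linear map $T_0$ from the algebraic tensor product $C_{\rm ap}(\rr d)\otimes H^s(\rr d)$ into $C_{\rm ap}(\rr d,H^s(\rr d))\subseteq B^2(\rr d,H^s(\rr d))$. Since $C_{\rm ap}(\rr d)$ is dense in $B^2(\rr d)$ and the algebraic tensor product is dense in the Hilbert space tensor product, the domain of $T_0$ is dense in $B^2(\rr d)\otimes H^s(\rr d)$; and $C_{\rm ap}(\rr d,H^s(\rr d))$ is dense in $B^2(\rr d,H^s(\rr d))$ by construction of the latter as a completion. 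Hence it is enough to check that $T_0$ is isometric and has dense range: then $T_0$ extends to an isometry onto $B^2(\rr d,H^s(\rr d))$, i.e.\ a unitary.

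For the isometry, take $u=\sum_{k=1}^{N} f_k\otimes v_k$ with $f_k\in C_{\rm ap}(\rr d)$ and $v_k\in H^s(\rr d)$. Using $\|G\|_{B(\rr d,H^s)}^2=\mathscr M_x(\|G(x)\|_{H^s}^2)$, bilinearity of the $H^s(\rr d)$ inner product, and linearity of the mean value functional,
\[
\|T_0 u\|_{B(\rr d,H^s)}^2=\mathscr M_x\Big(\Big\|\sum_k f_k(x)v_k\Big\|_{H^s}^2\Big)=\sum_{k,l}\mathscr M_x\big(f_k(x)\overline{f_l(x)}\big)\,(v_k,v_l)_{H^s},
\]
and by \eqref{besicovitch1} the last expression equals $\sum_{k,l}(f_k,f_l)_B\,(v_k,v_l)_{H^s}$, which is precisely $\|u\|^2$ in $B^2(\rr d)\otimes H^s(\rr d)$. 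So $T_0$ is isometric (in particular well defined, independently of the representation of $u$).

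For the density of the range, fix $F\in C_{\rm ap}(\rr d,H^s(\rr d))$. Property (iii) holds for $X$-valued a.p.\ functions (cf.\ \cite{Amerio1,Corduneanu1,Levitan1}): the Bochner--Fej{\'e}r construction \eqref{fourierreconstruction1} works verbatim with coefficients in $H^s(\rr d)$, so there are $H^s(\rr d)$-valued trigonometric polynomials $P_n$ — finite sums of terms $e_{\xi}\,w$ with $\xi\in\rr d$ and $w\in H^s(\rr d)$ — with $\sup_{x\in\rr d}\|F(x)-P_n(x)\|_{H^s}\to 0$. Since $\mathscr M_x(1)=1$ we have $\|G\|_{B(\rr d,H^s)}\le\sup_x\|G(x)\|_{H^s}$ for every $G\in C_{\rm ap}(\rr d,H^s(\rr d))$, hence $P_n\to F$ in $B^2(\rr d,H^s(\rr d))$. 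Each such $P_n$ equals $T_0$ applied to the corresponding element $\sum_j e_{\xi_j}\otimes w_j$ of $C_{\rm ap}(\rr d)\otimes H^s(\rr d)$, so the range of $T_0$ is dense; being isometric, $T_0$ therefore extends to a unitary $T\colon B^2(\rr d)\otimes H^s(\rr d)\to B^2(\rr d,H^s(\rr d))$.

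The only substantive input is the $H^s(\rr d)$-valued version of Bohr's approximation theorem invoked above; everything else is formal manipulation of tensor products and completions. (Alternatively one could route the argument through $B^2(\rr d)\simeq L^2(\rrb d)$ and the standard identification $L^2(\rrb d)\otimes H^s(\rr d)\simeq L^2(\rrb d,H^s(\rr d))$, but matching that space with the completion $B^2(\rr d,H^s(\rr d))$ requires essentially the same density step, so nothing is gained.)
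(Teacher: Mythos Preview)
Your proof is correct and defines the same unitary as the paper, but you verify unitarity by a different route. The paper fixes an ONB $\{\varphi_n\}$ for $H^s(\rr d)$, sets $T(e_\la\otimes\varphi_n)=e_\la\varphi_n$, observes that the images form an orthonormal system, and then proves completeness directly: if $f\in B^2(\rr d,H^s)$ is orthogonal to every $e_\la\varphi_n$, one first uses that $\{e_\la\}$ is an ONB for $B^2$ to conclude $(f(\cdot),\varphi_n)_{H^s}=0$ in $B^2$ for each $n$, and then sums over $n$ via Parseval to get $\mathscr M_x(\|f(x)\|_{H^s}^2)=0$. Your argument instead checks isometry on the full algebraic tensor product by expanding $\mathscr M_x(\|\cdot\|_{H^s}^2)$ bilinearly, and obtains surjectivity from the $H^s$-valued Bohr--Fej{\'e}r approximation (property~(iii) for Banach-valued a.p.\ functions, already recorded in the preliminaries). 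Your approach avoids choosing an ONB for $H^s$ and makes the map basis-free; the paper's approach is slightly more self-contained in that it does not invoke the vector-valued approximation theorem, and it singles out the explicit ONB $\{e_\la\varphi_n\}$ for $B^2(\rr d,H^s)$, which is used again later (e.g.\ in Proposition~\ref{equivalent1}).
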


\begin{proof}
Let $\{\varphi_n \}_{n=0}^{\infty}$ be an ONB for $H^s(\rr d)$. Since $\{ e_\la \}_{\la \in \rr d}$ is an ONB for $B^2(\rr d)$, \cite[Prop.~II.4.2]{Reed1} implies that $\{ e_\la \otimes \varphi_n \}_{\la \in \rr d, n \in \no}$ is an ONB for $B^2(\rr d) \otimes H^s(\rr d)$.
Define $T(e_\la \otimes \varphi_n) = e_\la \varphi_n$.
Since $\{ e_\la \varphi_n \}_{\la \in \rr d, n \in \no}$ is an orthonormal system in $B^2(\rr d, H^s(\rr d) )$, $T$ extends by linearity to a continuous map $T: B^2(\rr d) \otimes H^s(\rr d) \mapsto B^2(\rr d, H^s(\rr d) )$ that preserves inner products. It remains to prove that $T$ is onto.
Suppose $f \in B^2(\rr d, H^s(\rr d) )$ and
$$
(f, e_\la \varphi_n)_{ B^2(\rr d, H^s(\rr d)) } = \mathscr M_x \left( (f(x),\varphi_n)_{H^s(\rr d)} \overline{e_\la (x)} \right) = 0, \quad \la \in \rr d, \ n \in \no.
$$
Since $\{ e_\la \}_{\la \in \rr d}$ is an ONB for $B^2(\rr d)$, this means that $x \mapsto (f(x),\varphi_n)_{H^s(\rr d)} = 0$ in $B^2(\rr d)$ for all $n \in \no$ $\Leftrightarrow$ $\mathscr M_x( |(f(x),\varphi_n)_{H^s(\rr d)}|^2 ) = 0$ for all $n \in \no$. This is equivalent to
$$
0 = \mathscr M_x \left( \sum_{n=0}^\infty |(f(x),\varphi_n)_{H^s(\rr d)}|^2 \right) = \mathscr M_x \left( \| f(x) \|_{H^s(\rr d)}^2 \right),
$$
that is $f=0$ in $B^2(\rr d, H^s(\rr d) )$.
Thus $\{ e_\la \varphi_n \}_{\la \in \rr d, n \in \no}$ is an ONB in $B^2(\rr d, H^s(\rr d) )$ and $T$ is unitary.
\end{proof}

\begin{prop}\label{Acont1}
For $a \in APS_{\rho,\delta}^m$ the map $A$ extends to a continuous map
\begin{equation}\label{Acontstatement1}
A: B^2(\rr d) \otimes H^s(\rr d) \mapsto B^2(\rr d) \otimes H^{s-m}(\rr d), \quad s \in \ro.
\end{equation}
\end{prop}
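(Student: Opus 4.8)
The plan is to move to the Hilbert space $B^2(\rr d,H^s(\rr d))$ of $H^s$-valued Besicovitch functions via Lemma~\ref{hilbertiso1}, and to estimate $A$ on the dense subspace $TP(\rr d)\otimes\mathscr S(\rr d)$ of $B^2(\rr d)\otimes H^s(\rr d)$. For $u=\sum_i e_{\la_i}\otimes\phi_i$ with $\phi_i\in\mathscr S(\rr d)$, formula~\eqref{Adescr1} gives $(Au)(x)=a_x(y,D)\big(u(x)\big)$ with $u(x)=\sum_i e_{\la_i}(x)\phi_i\in\mathscr S(\rr d)$. Granting for the moment that $x\mapsto(Au)(x)$ lies in $C_{\rm ap}(\rr d,H^{s-m}(\rr d))$, one bounds its Besicovitch norm by the pointwise estimate $\|a_x(y,D)(u(x))\|_{H^{s-m}}\le\|a_x(y,D)\|_{\Lop(H^s,H^{s-m})}\,\|u(x)\|_{H^s}$ combined with the translation invariance~\eqref{sobolevtransl1}; taking mean values,
\begin{equation}\nonumber
\|Au\|_{B^2(\rr d,H^{s-m})}^2=\mathscr M_x\!\big(\|a_x(y,D)(u(x))\|_{H^{s-m}}^2\big)\le\|a(y,D)\|_{\Lop(H^s,H^{s-m})}^2\,\|u\|_{B^2(\rr d,H^s)}^2.
\end{equation}
Here $\|a(y,D)\|_{\Lop(H^s,H^{s-m})}<\infty$ by~\eqref{apsymbkont1} and~\eqref{normlikhet1} (cf.~\eqref{matrixnorm1}), so by density of $TP(\rr d)\otimes\mathscr S(\rr d)$ in $B^2(\rr d)\otimes H^s(\rr d)$ the operator $A$ extends to a bounded map $B^2(\rr d)\otimes H^s(\rr d)\mapsto B^2(\rr d)\otimes H^{s-m}(\rr d)$ with norm $\le\|a(y,D)\|_{\Lop(H^s,H^{s-m})}$.

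Everything thus hinges on showing that $A$ does not leave the Besicovitch scale, i.e.\ that $(Au)\in C_{\rm ap}(\rr d,H^{s-m}(\rr d))$; this is the step I expect to be the main obstacle. Since a scalar a.p.\ function times an $H^{s-m}$-valued a.p.\ function is again a.p., and since $(A(e_\la\otimes\phi))(x)=e_\la(x)\,a_x(y,D)\phi$, it suffices to prove that for each fixed $\phi\in\mathscr S(\rr d)$ the map $x\mapsto a_x(y,D)\phi=T_{-x}\,a(y,D)\,T_x\phi$ belongs to $C_{\rm ap}(\rr d,H^{s-m}(\rr d))$. One cannot simply argue that $x\mapsto a_x$ is almost periodic in the symbol topology of $APS_{\rho,\delta}^m$ (it need not be), so the smoothing effect of $\phi$ must be exploited.

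To carry this out I would combine two observations. First, for every $R>0$ and all $\alpha,\beta$ the family $\{\partial_\xi^\alpha\partial_x^\beta a(\cdot,\xi):|\xi|\le R\}$ is a precompact subset of $C_{\rm ap}(\rr d)$, being the image of the compact ball $\{|\xi|\le R\}$ under the map $\xi\mapsto\partial_\xi^\alpha\partial_x^\beta a(\cdot,\xi)$, which is Lipschitz in the $C_b$-norm by the symbol estimate~\eqref{hormclass1} applied with one extra $\xi$-derivative; hence by~\eqref{fourierreconstruction2}, and because Bochner--Fej\'er summation commutes with $\partial_\xi^\alpha\partial_x^\beta$, the sums $a^{(n)}(x,\xi)=\sum_\mu K_n(\mu)a_\mu(\xi)e_\mu(x)$ satisfy $\sup_{|\xi|\le R}\sup_x|\partial_\xi^\alpha\partial_x^\beta(a-a^{(n)})(x,\xi)|\to0$ as $n\to\infty$. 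Second, choosing $\psi\in C_c^\infty(\rr d)$ equal to one near the origin and $\phi_R=\psi(D/R)\phi$, one has $\|\phi-\phi_R\|_{H^s}\to0$ as $R\to\infty$, so by~\eqref{sobolevtransl1} the map $x\mapsto a_x(y,D)\phi$ is approximated, uniformly in $x$ in the $H^{s-m}$-norm, by $x\mapsto a_x(y,D)\phi_R$. For fixed $R$ the function $\wh{\phi_R}$ is supported in a bounded set, so choosing a cutoff $\chi_R\in C_c^\infty(\rr d)$ equal to one there one has $a_x(y,D)\phi_R=(a\,\chi_R)_x(y,D)\phi_R$, and the compactly $\xi$-supported symbols $(a-a^{(n)})\chi_R$ tend to $0$ in every H\"ormander seminorm by the first observation; the Calder\'on--Vaillancourt estimate (cf.~\cite{Folland1,Hormander3}) then gives $\|((a-a^{(n)})\chi_R)(y,D)\|_{\Lop(H^s,H^{s-m})}\to0$, whence $\sup_x\|a_x(y,D)\phi_R-a_x^{(n)}(y,D)\phi_R\|_{H^{s-m}}\to0$ by~\eqref{sobolevtransl1}. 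Since $a_x^{(n)}(y,D)\phi_R=\sum_\mu K_n(\mu)e_\mu(x)\,M_\mu a_\mu(D)\phi_R$ is an $H^{s-m}$-valued trigonometric polynomial in $x$, it lies in $C_{\rm ap}(\rr d,H^{s-m}(\rr d))$, and passing first to the limit $n\to\infty$ (for fixed $R$) and then $R\to\infty$ — both uniform limits in $x$ — the claim follows. I expect the bookkeeping in this last step, in particular verifying that the relevant symbol seminorms of $(a-a^{(n)})\chi_R$ really do tend to zero, to be the only genuinely technical part; the norm estimate of the first paragraph is essentially immediate once the Besicovitch structure is known to be preserved.
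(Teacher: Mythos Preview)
Your argument is correct and follows the same overall architecture as the paper's proof: estimate $A$ on the dense subspace $TP(\rr d)\otimes\mathscr S(\rr d)$ via Lemma~\ref{hilbertiso1} and the pointwise bound coming from~\eqref{sobolevtransl1}, then extend by density. The substantive difference lies in how you verify that $Au$ actually lands in the target space.

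The paper does not prove the strong statement $x\mapsto a_x(y,D)\phi\in C_{\rm ap}(\rr d,H^{s-m})$. Instead it shows only \emph{weak} almost periodicity: using precompactness of the single family $\{a(\cdot,\xi)\,\wh\phi(\xi)\,\eabs\xi^{d+1}\}_{\xi\in\rr d}\subseteq C_{\rm ap}(\rr d)$ and~\eqref{fourierreconstruction2}, it proves that $x\mapsto(a_x(y,D)\phi,g)_{L^2}\in C_{\rm ap}(\rr d)$ for every $g\in H^{m-s}$. This gives weak, hence (by separability of $H^{s-m}$) strong, measurability of $\rrb d\ni x\mapsto a_x(y,D)\phi$, and the norm identity is then written as an integral over $\rrb d$ with respect to Haar measure. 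Your route instead establishes norm almost periodicity directly, via the Fourier truncation $\phi_R$, Bochner--Fej\'er approximation in \emph{all} symbol seminorms on compact $\xi$-sets, and the Calder\'on--Vaillancourt estimate to convert symbol convergence into operator-norm convergence. This buys you a cleaner conclusion ($Au\in C_{\rm ap}(\rr d,H^{s-m})$, no measurability or Bohr-compactification machinery) at the cost of more bookkeeping: you need derivative families rather than a single family, and you invoke the continuity of $b\mapsto b(y,D)$ in the symbol topology rather than just its boundedness. The paper's approach is more economical for the purpose at hand; yours yields a slightly stronger intermediate result.
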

\begin{proof}
We abbreviate $H^s=H^s(\rr d)$, $\mathscr S = \mathscr S(\rr d)$ and $B^2=B^2(\rr d)$.
Let $s \in \ro$ and $f \in \mathscr S$.

As a first step we claim that $x \mapsto a_x(y,D)f$ extends to a strongly measureable function $\rrb d \mapsto H^{s-m}$.

In fact, $\{ a(\cdot,\xi) \ \wh f(\xi) \eabs{\xi}^{d+1} \}_{\xi \in \rr d} \subseteq C_{\rm ap}(\rr d)$ is a precompact family of functions, since it depends continuously on $\xi$ in the $C_{\rm ap}(\rr d)$ norm \cite{Wahlberg1} and decays to zero at infinity. Thus, according to \eqref{fourierreconstruction2} there exists for any $\ep>0$ a positive integer $N_\ep$ such that
\begin{equation}\label{fourierreconstruction3}
\sup_{\xi \in \rr d} \sup_{z \in \rr d} \left| \left( a(z,\xi) - \sum_{\la \in \La} K_n(\la) \ a_\la (\xi) \ e^{2 \pi i z \cdot \la} \right) \ \wh f(\xi) \eabs{\xi}^{d+1} \right| < \ep, \quad n  \geq N_\ep.
\end{equation}
Since $|a_\la(\xi)| \leq C \eabs{\xi}^m$ we have for $g \in \mathscr S$
\begin{equation}\nonumber
p_n (x) := \sum_{\la \in \La} K_n(\la) \ e^{2 \pi i x \cdot \la} \iint_{\rr {2d}} \ e^{2 \pi i y \cdot (\xi + \la)} a_\la (\xi) \ \wh f(\xi) \ \overline{g(y)} \ d\xi dy \in TP(\rr d).
\end{equation}
Combining with \eqref{fourierreconstruction3} it follows that
\begin{equation}\nonumber
\begin{aligned}
& |(a_x(y,D) f, g)_{L^2} - p_n (x)| \\
& = \left| \iint_{\rr {2d}} e^{2 \pi i y \cdot \xi} \left( a(x+y,\xi) - \sum_{\la \in \La} K_n(\la) e^{2 \pi i (x+y) \cdot \la} a_\la (\xi) \right) \wh f(\xi) \ \overline{g(y)} \ d\xi \ dy \right| \\
& \leq \ep \ \| g \|_{L^1} \ \| \eabs{\cdot}^{-d-1} \|_{L^1}, \quad x \in \rr d, \quad n  \geq N_\ep.
\end{aligned}
\end{equation}
It follows that the function $x \mapsto (a_x(y,D) f, g)_{L^2} \in C_{\rm ap}(\rr d)$, because it is a uniform limit of trigonometric polynomials. Next let $g \in H^{m-s}$ and pick $(g_n) \subseteq \mathscr S$ such that $\| g-g_n \|_{H^{m-s}} \rightarrow 0$ as $n \rightarrow \infty$.
Then \eqref{sobolevtransl1} gives
\begin{equation}\nonumber
\begin{aligned}
& |(a_x(y,D) f, g)_{L^2} - (a_x(y,D) f, g_n)_{L^2}| \leq \| a_x(y,D) f \|_{H^{s-m}} \| g - g_n \|_{H^{m-s}} \\
& \leq \| a(y,D) \|_{\Lop(H^s,H^{s-m})} \| f \|_{H^{s}} \| g - g_n \|_{H^{m-s}}, \quad x \in \rr d,
\end{aligned}
\end{equation}
which implies that $x \mapsto (a_x(y,D) f, g)_{L^2} \in C_{\rm ap}(\rr d)$ for any $g \in H^{m-s}$. Since a function in $C_{\rm ap}(\rr d)$ can be extended to a function in $C(\rrb d)$ (cf. \cite{Shubin4}), we may conclude that $\rrb d \ni x \mapsto (a_x(y,D) f, g)_{L^2}$ is a measurable function for any $g \in H^{m-s}$.
Since the dual $(H^{s-m})'$ can be identified with $H^{m-s}$ via the form $(\cdot,\cdot)_{L^2}$, this means that $\rrb d \ni x \mapsto a_x(y,D) f$ is a weakly measureable function $\rrb d \mapsto H^{s-m}$.
By \cite[Thm.~IV.22]{Reed1} the function $\rrb d \ni x \mapsto a_x(y,D) f$ is strongly measureable. Thus we have proved our claim:
\begin{equation}\label{measurable1}
\rrb d \ni x \mapsto a_x(y,D) f  \in H^{s-m} \quad \mbox{is strongly measureable}.
\end{equation}
Let $x \mapsto u(x,\cdot) \in TP(\rr d, \mathscr S)$, which means that $u$ has the form
\begin{equation}\label{trigpolsobolev1}
u(x,\cdot) = \sum_{j=1}^n e^{2 \pi i \xi_j \cdot x} f_j, \quad \xi_j \in \rr d, \quad f_j \in \mathscr S.
\end{equation}
Then
\begin{equation}\nonumber
A u(x,\cdot) = a_x(y,D) \ u(x,\cdot) = \sum_{j=1}^n e^{2 \pi i \xi_j \cdot x} a_x(y,D) f_j
\end{equation}
extends by \eqref{measurable1} to a strongly measureable function $\rrb d \mapsto H^{s-m}$.
Therefore it may be integrated.
We obtain using Lemma \ref{hilbertiso1}, \eqref{sobolevtransl1}, and denoting the Haar measure on $\rrb d$ by $\mu$,
\begin{equation}\nonumber
\begin{aligned}
\| A u \|_{B^2 \otimes H^{s-m}}^2
& = \mathscr M_x \left( \| a_x(y,D) \ u(x,\cdot) \|_{H^{s-m}}^2 \right) \\
& = \int_{\rrb d} \| a_x(y,D) \ u(x,\cdot) \|_{H^{s-m}}^2 \ \mu(dx) \\
& \leq \int_{\rrb d} \| a_x(y,D)\|_{\Lop(H^s,H^{s-m})}^2 \| u(x,\cdot) \|_{H^{s}}^2 \ \mu(dx) \\
& = \| a(y,D)\|_{\Lop(H^s,H^{s-m})}^2 \int_{\rrb d} \| u(x,\cdot) \|_{H^{s}}^2 \ \mu(dx) \\
& = \| a(y,D)\|_{\Lop(H^s,H^{s-m})}^2 \| u \|_{B^2 \otimes H^{s}}^2.
\end{aligned}
\end{equation}
Finally \eqref{Acontstatement1} follows from the density of $TP(\rr d, \mathscr S)$ in $B^2(\rr d,H^s) \simeq B^2 \otimes H^s$.
\end{proof}

It follows in particular from Proposition \ref{Acont1} that if for $a \in APS_{\rho,\delta}^0$ then $A \in \Lop(B^2(\rr d) \otimes L^2(\rr d))$.

Next we prove that the map \eqref{representation3} is a representation.
We use $TP(\rr d,\mathscr S)$ as a common domain of all operators $\{ A(a(y,D)), \ a \in APS_{\rho,\delta}^\infty \}$.

\begin{prop}\label{representation2}
Considering the algebra $APL_{\rho,\delta}^\infty$ either as a subspace in $L(B^2(\rr d))$ or in $L(L^2(\rr d))$, the map
$$
a(y,D) \mapsto A(a(y,D))
$$
is a faithful $^+$-representation of $APL_{\rho,\delta}^\infty$ on $L( B^2(\rr d) \otimes L^2(\rr d) )$.
It preserves positivity in the sense that $a(y,D) \geq 0$ on $TP(\rr d)$ if and only if $A \geq 0$ on $TP(\rr d,\mathscr S(\rr d))$.
\end{prop}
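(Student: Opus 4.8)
The plan is to verify, one after another, the requirements of a faithful ${}^+$-representation and then the positivity equivalence, using throughout the two facts recorded at the start of this section: the pointwise formula $(Au)(x,\cdot)=a_x(y,D)\,u(x,\cdot)$ of \eqref{Adescr1}, and the conjugation relation $a_x(y,D)=T_{-x}\circ a(y,D)\circ T_x$. Linearity is trivial, and $A(I)=I$ follows since $1_x(y,\xi)\equiv1$, whence $(A(I)u)(x,\cdot)=u(x,\cdot)$. For composition, let $a\in APS_{\rho,\delta}^m$ and $b\in APS_{\rho,\delta}^n$; the conjugation relation gives
\begin{equation}\nonumber
a_x(y,D)\,b_x(y,D)=T_{-x}\,a(y,D)\,b(y,D)\,T_x=(a\wpr_0 b)_x(y,D),
\end{equation}
so that for $u$ in the common domain $(A(a)A(b)u)(x,\cdot)=a_x(y,D)\,b_x(y,D)\,u(x,\cdot)=(a\wpr_0 b)_x(y,D)\,u(x,\cdot)=(A(a\wpr_0 b)u)(x,\cdot)$. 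Since by Proposition~\ref{Acont1} all of these operators are continuous on the scale $B^2(\rr d)\otimes H^s(\rr d)$, $s\in\ro$, the identity is meaningful and extends, so $A(a(y,D)b(y,D))=A(a(y,D))A(b(y,D))$.

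For the formal adjoint, recall $a(y,D)$ has a formal adjoint $a^+(y,D)\in APL_{\rho,\delta}^m$ with symbol $a^+\in APS_{\rho,\delta}^m$; since $T_x^*=T_{-x}$ on $L^2(\rr d)$, the conjugation relation yields $a_x(y,D)^+=(a^+)_x(y,D)$, and hence for $u,v\in TP(\rr d,\mathscr S(\rr d))$
\begin{equation}\nonumber
(A(a)u,v)_{B^2\otimes L^2}=\mathscr M_x\big(a_x(y,D)u(x,\cdot),v(x,\cdot)\big)_{L^2}=\mathscr M_x\big(u(x,\cdot),(a^+)_x(y,D)v(x,\cdot)\big)_{L^2}=(u,A(a^+)v)_{B^2\otimes L^2},
\end{equation}
where the mean value makes sense because $x\mapsto(a_x(y,D)f,g)_{L^2}\in C_{\rm ap}(\rr d)$ for $f,g\in\mathscr S(\rr d)$, as shown in the proof of Proposition~\ref{Acont1}. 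Thus $A(a(y,D))^+=A(a^+(y,D))$; in particular $A(a(y,D))$ has a densely defined formal adjoint, hence is closable, so $A(a(y,D))\in L(B^2(\rr d)\otimes L^2(\rr d))$. For faithfulness, test on $u=1\otimes g$ with $g\in\mathscr S(\rr d)$: if $A(a)=0$ then $a_x(y,D)g=0$ for $\mu$-almost every $x\in\rrb d$, and as $x\mapsto(a_x(y,D)g,h)_{L^2}\in C_{\rm ap}(\rr d)$ for all $h\in\mathscr S(\rr d)$ while the Haar measure $\mu$ on $\rrb d$ has full support, $a(y,D)g=0$ for every $g$, so $a=0$.

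It remains to prove the positivity equivalence. If $a(y,D)\ge0$ on $TP(\rr d)$ — equivalently on $\mathscr S(\rr d)$ (see the remark after Corollary~\ref{unitaryequivalent1}) — then each $a_x(y,D)=T_{-x}a(y,D)T_x$ is positive on $\mathscr S(\rr d)$, so for $u\in TP(\rr d,\mathscr S(\rr d))$ the integrand in $(A(a)u,u)_{B^2\otimes L^2}=\mathscr M_x(a_x(y,D)u(x,\cdot),u(x,\cdot))_{L^2}$ is a nonnegative almost periodic function of $x$, and hence $A\ge0$. Conversely, suppose $A\ge0$ on $TP(\rr d,\mathscr S(\rr d))$. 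For $g\in\mathscr S(\rr d)$ put $\phi_g(x)=(a_x(y,D)g,g)_{L^2}\in C_{\rm ap}(\rr d)$; testing on $u(x,\cdot)=P(x)g$ with $P\in TP(\rr d)$ gives $\mathscr M_x\big(|P(x)|^2\phi_g(x)\big)\ge0$. Since $TP(\rr d)$ is dense in $C(\rrb d)$, every nonnegative $k\in C(\rrb d)$ is a uniform limit of functions $|P|^2$ with $P\in TP(\rr d)$ (approximate $\sqrt k$), so $\int_{\rrb d}k\,\phi_g\,d\mu\ge0$ for all such $k$; this forces $\phi_g\ge0$ $\mu$-almost everywhere, hence, by continuity of $\phi_g$ and full support of $\mu$, everywhere on $\rrb d$. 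Taking $x=0$ gives $(a(y,D)g,g)_{L^2}\ge0$, and since $g$ is arbitrary, $a(y,D)\ge0$ on $\mathscr S(\rr d)$, hence on $TP(\rr d)$.

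The one nonroutine point is the converse half of the positivity equivalence: testing $A$ only against the product vectors $u(x,\cdot)=P(x)g$ yields merely nonnegativity of the mean value $\mathscr M_x(|P|^2\phi_g)$, and one must upgrade this to pointwise nonnegativity of $\phi_g$. This rests on the density of $\{|P|^2:P\in TP(\rr d)\}$ in the cone of nonnegative functions in $C(\rrb d)$ together with the full support of the Haar measure of the Bohr compactification $\rrb d$. Once the conjugation relation $a_x(y,D)=T_{-x}\circ a(y,D)\circ T_x$ is in hand, all the remaining verifications are routine.
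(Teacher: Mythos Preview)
Your proof is correct and covers all the required points. It agrees with the paper's proof in spirit but differs in two places.

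For composition, you use the conjugation $a_x(y,D)=T_{-x}\,a(y,D)\,T_x$ directly to get $a_x b_x=(a\wpr_0 b)_x$, which is shorter than the paper's computation at the symbol level (the paper verifies $T_{-x,0}(a\wpr_0 b)=(T_{-x,0}a)\wpr_0(T_{-x,0}b)$ by expanding the operator $e^{2\pi i D_z\cdot D_\eta}$). Your route is the same identity read at the operator level rather than the symbol level, and it is cleaner.

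For the converse half of the positivity equivalence, the paper builds an explicit approximate identity from Bochner--Fej\'er kernels: it writes $\mathscr F_B^{-1}K_n=|f_n|^2$, approximates $f_n$ by $p_n\in TP(\rr d)$, and shows $(|p_n|^2,h)_B\to\overline{h(0)}$ for $h(x)=(a_x(y,D)\varphi,\varphi)_{L^2}$, so that the limit recovers $(a(y,D)\varphi,\varphi)_{L^2}\ge0$ directly. You instead argue abstractly: density of $\{|P|^2:P\in TP(\rr d)\}$ in the nonnegative cone of $C(\rrb d)$ (via $\sqrt{k}$) gives $\int_{\rrb d}k\,\phi_g\,d\mu\ge0$ for all nonnegative $k$, and then full support of the Haar measure together with continuity of $\phi_g$ forces $\phi_g\ge0$ pointwise, in particular at $x=0$. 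Both arguments rest on the same two ingredients (density of trigonometric polynomials and $\phi_g\in C_{\rm ap}(\rr d)$), but the paper's version is constructive and yields the value at a single point, while yours passes through the measure-theoretic characterization of nonnegative continuous functions and gives $\phi_g\ge0$ globally. Your version is slightly more conceptual; the paper's has the advantage of not needing the full-support property of Haar measure (which, however, is elementary and is used elsewhere in the paper).
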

\begin{proof}
Again we abbreviate $B^2=B^2(\rr d)$, $H^s = H^s(\rr d)$ and $\mathscr S=\mathscr S(\rr d)$.
First we prove that if $a \in APS_{\rho,\delta}^m$, $b \in APS_{\rho,\delta}^n$, $A=A(a(y,D))$ and $B=B(b(y,D))$ then $A( a \wpr_0 b (y,D) ) = A \circ B$.
We have (cf. \cite{Hormander3})
\begin{equation}\nonumber
a \wpr_0 b(x,\xi) = e^{2 \pi i D_z \cdot D_\eta} a(x,\eta) b(z,\xi) \Big|_{z=x, \ \eta = \xi}
\end{equation}
where we denote $e^{2 \pi i D_z \cdot D_\eta} = \mathscr F^{-1} \mathcal M \mathscr F$ and $\mathcal M$ is the multiplier operator $(\mathcal M f) (\wh z,\wh \eta) = e^{2 \pi i \wh z \cdot \wh \eta} f(\wh z,\wh \eta)$ in the Fourier domain.
Hence
\begin{equation}\nonumber
\begin{aligned}
a \wpr_0 b(x+y,\xi) & = \mathscr F^{-1} \left( e^{2 \pi i z \cdot \eta} \mathscr F \left( a(x+y,\cdot) b(\cdot,\xi) \right) \right) \Big|_{z=x+y, \ \eta = \xi} \\
& = \mathscr F^{-1} \left( e^{2 \pi i z \cdot (\eta+x) } \mathscr F \left( (T_{-x,0} a)(y,\cdot) b(\cdot,\xi) \right) \right) \Big|_{z=y, \ \eta = \xi} \\
& = \mathscr F^{-1} \left( e^{2 \pi i z \cdot \eta} \mathscr F \left( (T_{-x,0} a)(y,\cdot) (T_{-x,0} b)(\cdot,\xi) \right) \right) \Big|_{z=y, \ \eta = \xi} \\
& = (T_{-x,0} a) \wpr_0 (T_{-x,0} b)(y,\xi).
\end{aligned}
\end{equation}
For $x \mapsto u(x,\cdot) \in TP(\rr d, \mathscr S)$ this means that
\begin{equation}\nonumber
\begin{aligned}
A( a \wpr_0 b (y,D) ) u(x,y) & = (T_{-x,0} (a \wpr_0 b) )(y,D) u(x,\cdot)(y) \\
& = \left( (T_{-x,0} a) \wpr_0 (T_{-x,0} b) \right) (y,D) u(x,\cdot)(y) \\
& = (T_{-x,0} a)(y,D) (T_{-x,0} b) (y,D) u(x,\cdot)(y) \\
& = A (B u) (x,y).
\end{aligned}
\end{equation}
Since the map \eqref{representation3} preserves the identity operator, $a(y,D) \mapsto A(a(y,D))$ is a representation.

Suppose $A=0$ as an operator on $B^2 \otimes L^2$. Then $\mathscr M_x \| a_x(y,D) f \|_{L^2}^2 = 0$ for all $f \in \mathscr S$, which implies that $x \mapsto a_x(y,D) f=0$ in $C_{\rm ap}(\rr d,L^2)$. Thus $a_x(y,D) f=0$ for all $x \in \rr d$, in particular $x=0$, for any $f \in \mathscr S$. It follows that $a(y,D)=0$ in $L(L^2)$ and $L(B^2)$ and $a(y,D) \mapsto A(a(y,D))$ is faithful.

Next we look at the formal adjoint operation of $A$.
Since $a(y,D)^+ = a^+(y,D)$ where $a^+ \in APS_{\rho,\delta}^m$, $A(a(y,D)^+)$ extends to a bounded operator $B^2 \otimes H^s \mapsto B^2 \otimes H^{s-m}$ for any $s \in \ro$.
Let $g,f \in \mathscr S$ and $\la,\mu \in \rr d$ be the frequencies of two characters $e_\la$ and $e_\mu$. We have
\begin{equation}\nonumber
\begin{aligned}
(e_\la \otimes g,A^+ (e_\mu \otimes f) )_{B^2 \otimes L^2} & = (A (e_\la \otimes g),e_\mu \otimes f )_{B^2 \otimes L^2} \\
& = \mathscr M_x \left( e_\la(x) \ \overline{e_\mu(x)} \ (a_x(y,D)g,f)_{L^2} \right) \\
& = \mathscr M_x \left( e_\la(x) \ \overline{e_\mu(x)} \ (g,a_x(y,D)^+f)_{L^2} \right) \\
& = \mathscr M_x \left( (e_\la(x) \otimes g (\cdot) ,a_x(y,D)^+ e_\mu(x) \otimes f(\cdot))_{L^2} \right).
\end{aligned}
\end{equation}
Since $a_x(y,D)^+ = (T_{-x} a(y,D) T_x )^+ = T_{-x} a(y,D)^+ T_x$
and $\la \in \rr d$, $g \in \mathscr S$ are arbitrary,
we deduce that $A^+ = A(a(y,D)^+)$ when the operators act on the algebraic tensor product $TP(\rr d) \otimes \mathscr S(\rr d) \simeq TP(\rr d,\mathscr S(\rr d))$.
The representation $A$ is therefore $^+$-invariant.

Finally we prove the preservation of positivity property. Suppose $a(y,D) \geq 0$ on $TP(\rr d)$, which is equivalent to $a(y,D) \geq 0$ on $\mathscr S(\rr d)$. Since $a_x(y,D) \geq 0$ on $\mathscr S (\rr d)$ we have for $x \mapsto u(x,\cdot) \in TP(\rr d,\mathscr S)$
\begin{equation}\nonumber
( A u(x,\cdot), u (x,\cdot) )_{L^2} = ( a_x(y,D) u(x,\cdot), u(x,\cdot) )_{L^2} \geq 0, \quad x \in \rr d,
\end{equation}
which gives
\begin{equation}\nonumber
( A u, u )_{B^2 \otimes L^2} = \mathscr M_x ( A u(x,\cdot), u (x,\cdot) )_{L^2} \geq 0.
\end{equation}
Thus $A \geq 0$ on $TP(\rr d,\mathscr S)$.

On the other hand, suppose that $( A u, u )_{B^2 \otimes L^2} \geq 0$ for all $u \in TP(\rr d,\mathscr S)$.
Let $h \in C_{\rm ap}(\rr d)$ and define the sequence of functions $(K_n)_{n=1}^\infty$, used in the Bohr--Fourier reconstruction formula \eqref{fourierreconstruction1}, based on the frequencies $\La$ corresponding to $h$.
Then $\mathscr F_B^{-1} K_n \in TP(\rr d)$ is nonnegative \cite{Levitan1} and we may hence write
$\mathscr F_B^{-1} K_n = |f_n|^2$ for $f_n \in C_{\rm ap}(\rr d)$ (cf. \cite{Levitan1}).

Next we need Parseval's formula $(f,g)_B = (\mathscr F_B f, \mathscr F_B g)_{l^2}$, $f,g \in B^2(\rr d)$, which is the bilinear generalization of Plancherel's formula \eqref{plancherel1}.
For $p \in TP(\rr d)$ we obtain
\begin{equation}\nonumber
\begin{aligned}
& \left| \left( |p|^2, h \right)_B - \overline{h(0)} \right|
\leq \left| \left( |p|^2-|f_n|^2 , h \right)_B \right|
+ \left| \sum_{\la \in \La} K_n(\la) \overline{{\wh h}_\la} - \overline{h(0)} \right| \\
& \leq \| h \|_{L^\infty} ( \| p \|_{L^\infty} + \| f_n \|_{L^\infty})  \| p-f_n \|_{L^\infty}
+ \left| \sum_{\la \in \La} K_n(\la) {\wh h}_\la - h(0) \right|.
\end{aligned}
\end{equation}
From \eqref{fourierreconstruction1} it follows that the right hand side may be made arbitrarily small by first picking $n$ sufficiently large and then picking $p \in TP(\rr d)$ in order to make  $\| p-f_n \|_{L^\infty}$ as small as necessary. It follows that there exists a sequence $(p_n)_{n=1}^\infty \subseteq TP(\rr d)$, depending on $h$, such that
\begin{equation}\label{deltaconvsequence1}
\lim_{n \rightarrow +\infty} \left( |p_n|^2, h \right)_B = \overline{h(0)}.
\end{equation}
Let $\varphi \in \mathscr S$. From the proof of Proposition \ref{Acont1} we know that $h(x)=( a_x(y,D) \varphi, \varphi )_{L^2} \in C_{\rm ap}(\rr d)$.
Put $u_{n} = p_n \varphi \in TP(\rr d,\mathscr S)$ where the sequence $(p_n)_{n=1}^\infty \subseteq TP(\rr d)$ is chosen in order to satisfy \eqref{deltaconvsequence1}.
The assumption and \eqref{deltaconvsequence1} give
\begin{equation}\nonumber
\begin{aligned}
0 \leq \mathscr M_x (A u_n (x,\cdot), u_n (x,\cdot) )_{L^2}
& = \mathscr M_x \left( |p_n (x) |^2 ( a_x(y,D) \varphi, \varphi )_{L^2} \right) \\
& \rightarrow ( \varphi, a(y,D)  \varphi )_{L^2}, \quad n \rightarrow +\infty.
\end{aligned}
\end{equation}
It follows that $a(y,D) \geq 0$ on $\mathscr S$ as well as on $TP(\rr d)$.
\end{proof}

\section{Unitary equivalence for nonpositive order}

Also in this section we assume that \eqref{rhodeltakrav1} holds.
Let $\{\varphi_n \}_{n=0}^{\infty} \subseteq \mathscr S(\rr d)$ be an ONB for $L^2(\rr d)$.
We define the map $Q$ from an ONB in $B^2(\rr d) \otimes L^2(\rr d)$ to $L^2(\rr d, \l^2(\rr d))$ by
\begin{equation}\label{Qdef}
Q (e_{\la} \otimes \varphi_n) = \varphi_n (x) \ e_{-\la} (x) \ \delta_{(-\la)}, \quad \la \in \rr d, \quad n \in \no, \quad x \in \rr d.
\end{equation}
Initially $Q$ is defined only on the ONB $\{ e_{\la} \otimes \varphi_n \}_{\la \in \rr d,n \in \no}$. The following result extends its domain and range.
\begin{lem}
$Q: B^2(\rr d) \otimes L^2(\rr d) \mapsto L^2(\rr d, l^2(\rr d))$ defined by \eqref{Qdef} extends to a unitary transformation.
\end{lem}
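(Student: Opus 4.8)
The plan is to verify that the map $Q$ defined on the orthonormal basis $\{ e_\la \otimes \varphi_n \}_{\la \in \rr d, n \in \no}$ of $B^2(\rr d) \otimes L^2(\rr d)$ sends it to an orthonormal basis of $L^2(\rr d, l^2(\rr d))$; once this is done, $Q$ extends by linearity and continuity to a unitary, by the standard fact that a bijection between Hilbert space orthonormal bases extends uniquely to a unitary operator.

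First I would check that $\{ \varphi_n(x) e_{-\la}(x) \delta_{(-\la)} \}_{\la \in \rr d, n \in \no}$ is an orthonormal system in $L^2(\rr d, l^2(\rr d))$. Indeed, writing $\psi_{\la,n}(x) = \varphi_n(x) e_{-\la}(x) \delta_{(-\la)}$, the $l^2(\rr d)$-valued function $\psi_{\la,n}$ is supported (in the $l^2$-index) at the single index $-\la$, so
\begin{equation}\nonumber
(\psi_{\la,n}, \psi_{\mu,k})_{L^2(\rr d, l^2)} = \int_{\rr d} ( \psi_{\la,n}(x), \psi_{\mu,k}(x) )_{l^2} \, dx = \delta_{(-\la)}(-\mu) \int_{\rr d} \varphi_n(x) \overline{\varphi_k(x)} e_{-\la}(x) \overline{e_{-\mu}(x)} \, dx,
\end{equation}
which vanishes unless $\la = \mu$, in which case it reduces to $\int_{\rr d} \varphi_n(x) \overline{\varphi_k(x)} \, dx = \delta_{n,k}$. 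So $Q$ maps an ONB into an orthonormal system and extends to an isometry.

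The main point — and the only step requiring real work — is showing that $\{ \psi_{\la,n} \}_{\la \in \rr d, n \in \no}$ is \emph{complete} in $L^2(\rr d, l^2(\rr d))$, equivalently that $Q$ is onto. Suppose $F \in L^2(\rr d, l^2(\rr d))$ is orthogonal to every $\psi_{\la,n}$. Writing $F(x) = (F_\mu(x))_{\mu \in \rr d}$, strong measurability forces $F$ to take values in a separable subspace of $l^2(\rr d)$ almost everywhere, so only countably many of the component functions $F_\mu$ are not identically zero, and each $F_\mu \in L^2(\rr d)$ with $\sum_\mu \| F_\mu \|_{L^2}^2 = \| F \|_{L^2(\rr d, l^2)}^2 < \infty$. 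Orthogonality to $\psi_{\la,n}$ gives
\begin{equation}\nonumber
0 = (F, \psi_{\la,n})_{L^2(\rr d, l^2)} = \int_{\rr d} F_{-\la}(x) \overline{\varphi_n(x)} \, \overline{e_{-\la}(x)} \, dx = ( F_{-\la} e_\la, \varphi_n )_{L^2(\rr d)}
\end{equation}
for every $n \in \no$; since $\{ \varphi_n \}$ is an ONB for $L^2(\rr d)$, this forces $F_{-\la} e_\la = 0$ in $L^2(\rr d)$, hence $F_{-\la} = 0$ in $L^2(\rr d)$, for every $\la \in \rr d$. Thus $F = 0$ and $Q$ is onto, hence unitary. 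The only mild subtlety is the separability/measurability bookkeeping needed to justify expanding $F$ componentwise and interchanging the integral with the (countable) $l^2$-sum, which is handled exactly as in the Preliminaries discussion of $L_{\rm loc}^2(\rr d, l_s^2)$; everything else is a routine orthonormality computation.
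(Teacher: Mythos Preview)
Your proof is correct and follows essentially the same approach as the paper: verify orthonormality of the image system by a direct computation, then prove completeness by taking $F$ orthogonal to all $\psi_{\la,n}$, using separability of the essential range (the paper cites Pettis's measurability theorem explicitly) to reduce to countably many component functions, and concluding $F_{-\la}=0$ in $L^2$ for each $\la$ from completeness of $\{\varphi_n\}$. The only cosmetic difference is that the paper phrases the final step as $\|F\|_{L^2(\rr d,l^2)}^2 = \int_{\rr d \setminus N} \sum_{\mu \in \La} |F_\mu(x)|^2\,dx = 0$, making the null set and countable index set explicit.
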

\begin{proof}
Since $\{ e_\mu \otimes \varphi_n \}_{\mu \in \rr d, n \in \no}$ is an ONB for $B^2(\rr d) \otimes L^2(\rr d)$, we have
\begin{equation}\nonumber
\begin{aligned}
& ( Q (e_\mu \otimes \varphi_n), Q (e_{\mu'} \otimes \varphi_{n'}) )_{L^2(\rr d, l^2(\rr d) )} \\
& = ( \varphi_n  e_{-\mu},\varphi_{n'} e_{-\mu'})_{L^2(\rr d)} \ ( \delta_{(-\mu)},\delta_{(-\mu')})_{l^2(\rr d)}
= \delta_{(\mu-\mu')} \delta_{(n-n')},
\end{aligned}
\end{equation}
so $\{ Q (e_\mu \otimes \varphi_n) \}_{\mu \in \rr d, n \in \no}$ is an orthonormal set in $L^2(\rr d, l^2(\rr d))$. To prove that it is an ONB, suppose $F \in L^2(\rr d, l^2(\rr d))$ and
\begin{equation}\label{orthonormality2}
(F, Q (e_\mu \otimes \varphi_n) )_{L^2(\rr d,l^2(\rr d))} = 0 \quad \forall \mu \in \rr d \quad \forall n \in \no.
\end{equation}
By Pettis's measurability theorem \cite{Diestel1}, $F(x)$ takes values in a separable subset of $U \subseteq l^2(\rr d)$ for almost all $x \in \rr d$.
Thus there exists a null set $N \subseteq \rr d$ and a countable index set $\La \subseteq \rr d$
such that $x \in \rr d \setminus N$ and $\mu \in \rr d \setminus \La$ imply $(F(x), \delta_{(\mu)})_{l^2}=0$.
Denoting $F_{\mu} (x) = ( F(x), \delta_{(\mu)})_{l^2}$ for $\mu \in \rr d$, \eqref{orthonormality2} yields
\begin{equation}\nonumber
\left( F_{-\mu} e_\mu, \varphi_n \right)_{L^2(\rr d)} = 0 \quad \forall n \in \no
\quad \Longleftrightarrow \quad F_{-\mu} = 0 \quad \mbox{in} \quad L^2(\rr d),
\end{equation}
for any $\mu \in \rr d$.
Thus
$$
\| F \|_{L^2(\rr d,l^2)}^2 = \int_{\rr d \setminus N} \| F(x) \|_{l^2}^2 \ dx =  \int_{\rr d \setminus N} \sum_{\mu \in \La} |F_{\mu}(x)|^2 \ dx = 0,
$$
which implies that $\{ Q (e_\mu \otimes \varphi_n) \}_{\mu \in \rr d, n \in \no}$ is an ONB and $Q$ extends to a unitary transformation.
\end{proof}

\begin{lem}\label{fubinityp1}
Suppose $f \in C_b^\infty(\rr {d+n})$, $f(\cdot,y) \in C_{\rm ap}(\rr d)$ for all $y \in \rr n$ and $y \mapsto \| f(\cdot,y) \|_{L^\infty(\rr d)} \in L^1(\rr n)$.
Then $\int_{\rr n} f(\cdot,y) dy \in C_{\rm ap}(\rr d)$ and
\begin{equation}\nonumber
\mathscr M_x \left( \int_{\rr n} f(x,y) \ dy \right) = \int_{\rr n} \mathscr M_x( f(x,y) ) \ dy.
\end{equation}
\end{lem}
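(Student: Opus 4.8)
The plan is to establish the two assertions in turn: first that $g(x):=\int_{\rr n}f(x,y)\,dy$ is well defined and belongs to $C_{\rm ap}(\rr d)$, and then that $\mathscr M(g)$ is obtained by integrating $\mathscr M_x(f(x,y))$ over $y$.

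For the first assertion I would regard $y\mapsto f(\cdot,y)$ as a map $F\colon\rr n\to C_b(\rr d)$, the latter being a Banach space under the supremum norm in which $C_{\rm ap}(\rr d)$ sits as a closed subspace (a uniform limit of a.p. functions being a.p.). Since $f\in C_b^\infty(\rr{d+n})$, the first-order $y$-derivatives of $f$ are bounded, so the mean value theorem gives $\|F(y)-F(y')\|_{L^\infty(\rr d)}\le C|y-y'|$; thus $F$ is continuous, hence strongly measurable (its range is separable, $\rr n$ being $\sigma$-compact, and weak measurability is automatic for a continuous map), and by hypothesis $y\mapsto\|F(y)\|_{L^\infty(\rr d)}=\|f(\cdot,y)\|_{L^\infty(\rr d)}$ lies in $L^1(\rr n)$. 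Hence $F$ is Bochner integrable, its integral lies in the closed subspace $C_{\rm ap}(\rr d)$, and applying the bounded evaluation functional at $x$ identifies that integral, pointwise in $x$, with $\int_{\rr n}f(x,y)\,dy=g(x)$. So $g\in C_{\rm ap}(\rr d)$. (A self-contained alternative avoiding vector-valued integration: approximate $\int_{|y|\le R}f(\cdot,y)\,dy$ uniformly in $x$ by Riemann sums, which are finite linear combinations of a.p. functions hence a.p., using the uniform-in-$x$ continuity of $f$ in $y$; then pass $R\to\infty$, the tail being controlled uniformly in $x$ by $\int_{|y|>R}\|f(\cdot,y)\|_{L^\infty(\rr d)}\,dy$.)

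For the identity, since $g\in C_{\rm ap}(\rr d)$ we have $\mathscr M(g)=\lim_{T\to\infty}T^{-d}\int_{K_T}g(x)\,dx$. For each fixed $T$, classical Fubini applies on $K_T\times\rr n$ because $\int_{K_T}\int_{\rr n}|f(x,y)|\,dy\,dx\le T^d\int_{\rr n}\|f(\cdot,y)\|_{L^\infty(\rr d)}\,dy<\infty$, which gives $T^{-d}\int_{K_T}g(x)\,dx=\int_{\rr n}\big(T^{-d}\int_{K_T}f(x,y)\,dx\big)\,dy$. I would then let $T\to\infty$ inside the $y$-integral: for each $y$ the inner average tends to $\mathscr M_x(f(x,y))$ since $f(\cdot,y)\in C_{\rm ap}(\rr d)$, while $|T^{-d}\int_{K_T}f(x,y)\,dx|\le\|f(\cdot,y)\|_{L^\infty(\rr d)}$ uniformly in $T$, with majorant in $L^1(\rr n)$. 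Dominated convergence then yields $\mathscr M(g)=\int_{\rr n}\mathscr M_x(f(x,y))\,dy$; the right-hand integrand is measurable (indeed continuous, being $\mathscr M\circ F$) and integrable by the same bound.

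The one point requiring care is precisely this interchange of $\lim_{T\to\infty}$ with $\int_{\rr n}\,dy$ in the last step; it is exactly the hypothesis $y\mapsto\|f(\cdot,y)\|_{L^\infty(\rr d)}\in L^1(\rr n)$ that furnishes the $T$-independent dominating function needed for the dominated convergence theorem. Everything else — well-definedness of $g$ and its almost periodicity — is routine once one uses that $C_{\rm ap}(\rr d)$ is closed in $C_b(\rr d)$ and that integration (Bochner, or as a uniform limit of Riemann sums) preserves membership in it.
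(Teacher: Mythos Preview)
Your proof is correct and follows essentially the same approach as the paper's own proof: for the almost periodicity of $g$, the paper simply notes that $\int_{\rr n} f(\cdot,y)\,dy$ can be approximated uniformly in $x$ by finite sums of $C_{\rm ap}(\rr d)$ functions (which is precisely your Riemann-sum alternative, and your Bochner-integral packaging is an equivalent way to say the same thing); for the identity, the paper applies Fubini for fixed $T$ and then dominated convergence with the majorant $y\mapsto\|f(\cdot,y)\|_{L^\infty(\rr d)}$, exactly as you do. Your version is more detailed but the structure is the same.
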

\begin{proof}
The integral $\int_{\rr n} f(x,y) dy$ can be approximated, uniformly in $x$, by a finite sum of $C_{\rm ap}(\rr d)$ functions. Therefore it belongs to $C_{\rm ap}(\rr d)$.
By Fubini's theorem we have
\begin{equation}\nonumber
\begin{aligned}
\mathscr M_x \left( \int_{\rr k} f(x,y) \ dy \right) & = \lim_{T \rightarrow + \infty} T^{-d}
\int_{K_T} \left( \int_{\rr n} f(x,y) \ dy \right) dx \\
& = \lim_{T \rightarrow + \infty}
\int_{\rr n} T^{-d} \left( \int_{K_T} f(x,y) \ dx \right) dy.
\end{aligned}
\end{equation}
The result follows from Lebesgue's dominated convergence theorem, since
the integrand with respect to $y$ is dominated by $y \mapsto \| f(\cdot,y) \|_{L^\infty(\rr d)}$.
\end{proof}

For $a \in APS_{\rho,\delta}^0$, Corollary \ref{sobolevcorollary1} implies $U(a)(D) \in \Lop(L^2(\rr d,l^2))$, and $A(a(y,D)) \in \Lop(B^2(\rr d) \otimes L^2(\rr d))$ by Proposition \ref{Acont1}.
The next result says that these operators are unitarily equivalent.

\begin{prop}\label{equivalent1}
If $a \in APS_{\rho,\delta}^0$ then the operators
\begin{equation}\nonumber
\begin{aligned}
U(a)(D): & \quad L^2(\rr d, l^2(\rr d)) \mapsto L^2(\rr d, l^2(\rr d)), \\
A(a(y,D)): & \quad B^2(\rr d) \otimes L^2(\rr d) \mapsto B^2(\rr d) \otimes L^2(\rr d)
\end{aligned}
\end{equation}
are unitarily equivalent.
\end{prop}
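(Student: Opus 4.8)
The plan is to prove that the unitary $Q$ of \eqref{Qdef} intertwines the two operators, i.e.\ that $U(a)(D)\,Q = Q\,A(a(y,D))$; conjugating by $Q$ then yields the asserted unitary equivalence. Since $a\in APS_{\rho,\delta}^0$, Corollary \ref{sobolevcorollary1} gives $U(a)(D)\in\Lop(L^2(\rr d,l^2(\rr d)))$ and Proposition \ref{Acont1} gives $A:=A(a(y,D))\in\Lop(B^2(\rr d)\otimes L^2(\rr d))$, while by construction $Q$ carries the orthonormal basis $\{e_\la\otimes\varphi_n\}_{\la\in\rr d,\,n\in\no}$ onto an orthonormal basis of $L^2(\rr d,l^2(\rr d))$. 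Hence it suffices to verify $U(a)(D)\,Q(e_\la\otimes\varphi_n)=Q\,A(e_\la\otimes\varphi_n)$ on each basis vector; the general identity then follows by linearity, the density of $\linspan\{e_\la\otimes\varphi_n\}$, and the boundedness of all maps involved. Note that $e_\la\otimes\varphi_n\in TP(\rr d,\mathscr S(\rr d))$, so $A$ acts on it by the explicit formula \eqref{Adescr1} (which agrees there with the continuous extension of Proposition \ref{Acont1}).

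For the left-hand side I would first record, from \eqref{Qdef}, that $Q(e_\la\otimes\varphi_n)(x)=\varphi_n(x)e_{-\la}(x)\,\delta_{(-\la)}$, so its Fourier transform in $x$ is $\xi\mapsto\wh\varphi_n(\xi+\la)\,\delta_{(-\la)}$. Applying the Fourier multiplier \eqref{kvantvekt0} and using the kernel formula \eqref{Udef}, namely $U(a)(\xi)_{\mu,-\la}=a_{-\la-\mu}(\xi+\la)$ with $a_\nu$ the Bohr--Fourier coefficients \eqref{bohrfourier1}, the $\mu$-component of $U(a)(D)\,Q(e_\la\otimes\varphi_n)(x)$ equals $\int_{\rr d}e^{2\pi i\xi\cdot x}a_{-\la-\mu}(\xi+\la)\wh\varphi_n(\xi+\la)\,d\xi$, which after the substitution $\eta=\xi+\la$ becomes $e_{-\la}(x)\,[a_{-\la-\mu}(D)\varphi_n](x)$, where $a_\nu(D)$ denotes the ordinary Fourier multiplier with the bounded symbol $a_\nu$.

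For the right-hand side, $A(e_\la\otimes\varphi_n)(x,y)=e_\la(x)\,[a_x(y,D)\varphi_n](y)$. Regarding $x\mapsto[a_x(y,D)\varphi_n]$ as an element of $B^2(\rr d)\otimes L^2(\rr d)\simeq B^2(\rr d,L^2(\rr d))$ (Lemma \ref{hilbertiso1}), I would compute its Bohr--Fourier coefficients $\wh h_\nu=\mathscr M_x\big(e_{-\nu}(x)\!\int_{\rr d}e^{2\pi i\xi\cdot y}a(x+y,\xi)\wh\varphi_n(\xi)\,d\xi\big)$: interchanging $\mathscr M_x$ with the $\xi$-integral by Lemma \ref{fubinityp1} and then using translation invariance of the mean (the substitution $w=x+y$ inside $\mathscr M_x$) gives $\wh h_\nu=M_\nu(a_\nu(D)\varphi_n)$, hence $A(e_\la\otimes\varphi_n)=\sum_{\nu}e_{\la+\nu}\otimes M_\nu(a_\nu(D)\varphi_n)$ (the sum being over the countable frequency set of $a$), the series converging in $B^2(\rr d)\otimes L^2(\rr d)$. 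Since $Q$ is continuous it may be applied term by term; the term indexed by $\nu$ contributes only to the $l^2$-coordinate $-(\la+\nu)$, so reading off the $\mu$-component means taking $\nu=-\la-\mu$, and the modulations $M_\nu$ and $e_{-(\la+\nu)}$ from \eqref{Qdef} combine to $e_{-\la}$, yielding $e_{-\la}(x)\,[a_{-\la-\mu}(D)\varphi_n](x)$. This coincides with the $\mu$-component found on the left-hand side, so the intertwining identity holds on the basis, and the proposition follows.

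I expect the main technical obstacle to be the justification of exchanging the mean value functional with the $\xi$-integral, that is, checking the hypotheses of Lemma \ref{fubinityp1} for the integrand $(x,\xi)\mapsto e_{-\nu}(x)e^{2\pi i\xi\cdot y}a(x+y,\xi)\wh\varphi_n(\xi)$: here the hypothesis $a\in APS_{\rho,\delta}^0$ (so that $a(x,\xi)$, and all its derivatives after multiplication by the Schwartz factor $\wh\varphi_n$, are bounded) together with the almost periodicity of $a(\cdot,\xi)$ is exactly what makes the lemma applicable; the remaining difficulty is the more routine bookkeeping of the several Fourier and Bohr--Fourier transforms and of the substitutions $\eta=\xi+\la$ and $w=x+y$. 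One should also confirm, using Proposition \ref{Acont1} and Lemma \ref{hilbertiso1}, that $A(e_\la\otimes\varphi_n)$ genuinely lies in $B^2(\rr d)\otimes L^2(\rr d)$ with a norm-convergent Bohr--Fourier expansion to which the continuous map $Q$ may be applied term by term.
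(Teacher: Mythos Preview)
Your proposal is correct and follows essentially the same approach as the paper: verify the intertwining relation $U(a)(D)\,Q = Q\,A$ on the basis vectors $e_\la\otimes\varphi_n$, computing both sides via the Bohr--Fourier coefficients $a_\nu$ and invoking Lemma~\ref{fubinityp1} to exchange $\mathscr M_x$ with the $\xi$-integral. Your organization is slightly more streamlined than the paper's: you expand $A(e_\la\otimes\varphi_n)$ only in the $B^2$-direction (as $\sum_\nu e_{\la+\nu}\otimes M_\nu(a_\nu(D)\varphi_n)$ via Lemma~\ref{hilbertiso1}) rather than in the full tensor ONB $\{e_{\la'}\otimes\varphi_m\}$, which lets you read off the $\mu$-component of $QA(e_\la\otimes\varphi_n)$ as a single term and thereby sidestep the resummation over $m$ (the paper's \eqref{mellanled1}) and the attendant null-set bookkeeping with Pettis's theorem.
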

\begin{proof}
Let $\{\varphi_n \}_{n=0}^{\infty} \subseteq \mathscr S(\rr d)$ be an ONB for $L^2(\rr d)$,
and let $n \in \no$ and $\mu \in \rr d$.
We have
\begin{equation}\nonumber
\left( U(a)(\xi) \cdot \delta_{(-\mu)} \right)_\la = a_{-\mu - \la}(\xi+\mu), \quad \xi, \la \in \rr d,
\end{equation}
and hence for $y,\la \in \rr d$
\begin{equation}
\begin{aligned}
\left( U(a)(D) (Q e_\mu \otimes \varphi_n)(y) \right)_\la & = \int_{\rr d} e^{2 \pi i y \cdot \xi} \left( U(a)(\xi) \cdot \delta_{(-\mu)} \right)_\la \wh{\varphi_n e_{-\mu}}(\xi) \ d\xi \\
& = \int_{\rr d} e^{2 \pi i y \cdot (\xi-\mu)} \ a_{-\mu - \la}(\xi) \ \wh{\varphi_n}(\xi) \ d\xi \\
& = e_{-\mu} (y) \ a_{-\mu - \la} (D) \ \varphi_n (y). \label{mellanled0}
\end{aligned}
\end{equation}
On the other hand we have, writing $A=A(a(y,D))$,
\begin{equation}\nonumber
\begin{aligned}
A(e_\mu \otimes \varphi_n)(x,y) = e_\mu(x) \int_{\rr d} e^{2 \pi i y \cdot \xi} \ a(x+y,\xi) \ \wh \varphi_n(\xi) \ d \xi.
\end{aligned}
\end{equation}
We compute the coefficients of $A(e_\mu \otimes \varphi_n)$ with respect to $e_{\la'} \otimes \varphi_m$, $\la' \in \rr d$, $m \in \no$. By Lemma \ref{hilbertiso1} and Lemma \ref{fubinityp1} we have
\begin{equation}\nonumber
\begin{aligned}
& \left( A(e_\mu \otimes \varphi_n), e_{\la'} \otimes \varphi_m \right)_{B^2(\rr d) \otimes L^2(\rr d)} \\
& = \mathscr M_x \left( e_{-\la'}(x) \iint_{\rr {2d}} e^{2 \pi i (x \cdot \mu+y \cdot \xi)} \ a(x+y,\xi) \ \wh \varphi_n(\xi) \ \overline{\varphi_m(y)} \ d \xi \ dy \right) \\
& = \iint_{\rr {2d}} \mathscr M_x \left( e^{2 \pi i (x \cdot (\mu-\la')+y \cdot \xi)} \ a(x+y,\xi) \right) \ \wh \varphi_n(\xi) \ \overline{\varphi_m(y)} \ d \xi \ dy \\
& = \iint_{\rr {2d}} a_{\la'-\mu}(\xi) \ e^{2 \pi i (y \cdot (\la'-\mu)+y \cdot \xi)} \ \wh \varphi_n(\xi) \ \overline{\varphi_m(y)} \ d \xi \ dy \\
& = ( e_{\la'-\mu} \ a_{\la'-\mu}(D) \ \varphi_n, \varphi_m)_{L^2(\rr d)}.
\end{aligned}
\end{equation}
Since $A(e_\mu \otimes \varphi_n) \in B^2(\rr d) \otimes L^2(\rr d)$, we have
$$
\sum_{m=0}^\infty \left| \left( A(e_\mu \otimes \varphi_n), e_{\la'} \otimes \varphi_m \right)_{B^2 \otimes L^2} \right|^2 < \infty
$$
for each $\la' \in \rr d$. Therefore for any $\la' \in \rr d$
\begin{equation}
\begin{aligned}
& \sum_{m=0}^\infty \left( A(e_\mu \otimes \varphi_n), e_{\la'} \otimes \varphi_m \right)_{B^2 \otimes L^2} \varphi_m(y) \\
& = \sum_{m=0}^\infty ( e_{\la'-\mu} \ a_{\la'-\mu}(D) \ \varphi_n , \varphi_m)_{L^2} \ \varphi_m(y)
= e_{\la'-\mu} (y) \ a_{\la'-\mu}(D) \ \varphi_n (y)  \label{mellanled1}
\end{aligned}
\end{equation}
with convergence in $L^2(\rr d)$. The identity \eqref{mellanled1} thus holds for all $y \in \rr d \setminus N_{\mu,n,\la'}$
where $N_{\mu,n,\la'} \subseteq \rr d$ is a null set depending on $\mu,n,\la'$.
We obtain for any $\la \in \rr d$, and $y \in \rr d \setminus N_{\mu,n,-\la}$, using \eqref{Qdef}, \eqref{mellanled1} and \eqref{mellanled0},
\begin{equation}\nonumber
\begin{aligned}
& \left( Q A(e_\mu \otimes \varphi_n) (y) \right)_\la \\
& = Q \left( \sum_{\la' \in \rr d, \ m \in \no} \left( A(e_\mu \otimes \varphi_n), e_{\la'} \otimes \varphi_m \right)_{B^2 \otimes L^2} e_{\la'} \otimes \varphi_m \right) (y)_\la \\
& = \sum_{\la' \in \rr d, \ m \in \no} \left( A(e_\mu \otimes \varphi_n), e_{\la'} \otimes \varphi_m \right)_{B^2 \otimes L^2} \ Q (e_{\la'} \otimes \varphi_m)(y)_\la \\
& = \sum_{\la' \in \rr d, \ m \in \no} \left( A(e_\mu \otimes \varphi_n), e_{\la'} \otimes \varphi_m \right)_{B^2 \otimes L^2} \ \varphi_m(y) \ e_{-\la'}(y) \ \left( \delta_{(-\la')} \right)_\la \\
& = \sum_{m \in \no} \left( A(e_\mu \otimes \varphi_n), e_{-\la} \otimes \varphi_m \right)_{B^2 \otimes L^2} \ \varphi_m(y) \ e_{\la}(y) \\
& = e_{-\mu} (y) \ a_{-\la-\mu}(D) \ \varphi_n (y)
= \left( U(a)(D) \ (Q e_\mu \otimes \varphi_n)(y) \right)_\la.
\end{aligned}
\end{equation}
Since $U(a)(D) (Q e_\mu \otimes \varphi_n), \ Q A(e_\mu \otimes \varphi_n) \in L^2(\rr d,l^2)$ there exists by Pettis's measurability theorem
a null set $N_{\mu,n} \subseteq \rr d$ and a countable set $\La_{\mu,n} \subseteq \rr d$ such that $y \in \rr d \setminus N_{\mu,n}$ implies $(U(a)(D) (Q e_\mu \otimes \varphi_n) (y))_\la = ( Q A(e_\mu \otimes \varphi_n) (y) )_\la = 0$ for $\la \in \rr d \setminus \La_{\mu,n}$. Define the null set
$$
N = N_{\mu,n} \cup \bigcup_{\la \in \La_{\mu,n}} N_{\mu,n,-\la} \subseteq \rr d.
$$
We have
\begin{equation}\nonumber
\begin{aligned}
& \| Q A(e_\mu \otimes \varphi_n) - U(a)(D) (Q e_\mu \otimes \varphi_n) \|_{L^2(\rr d, l^2)}^2 \\
& = \int_{\rr d \setminus N} \sum_{\la \in \La_{\mu,n}} | Q A(e_\mu \otimes \varphi_n)(y)_\la - U(a)(D) (Q e_\mu \otimes \varphi_n)(y)_\la |^2 \ dy = 0.
\end{aligned}
\end{equation}
Therefore $A$ and $Q^* U(a)(D) Q$ are equal when they act on the ONB $\{ e_\mu \otimes \varphi_n \}_{\mu \in \rr d, n \in \no}$, and hence $A = Q^* U(a)(D) Q$ on the whole Hilbert space $B^2(\rr d) \otimes L^2(\rr d)$.
\end{proof}

\section{Applications to spectral theory}

A closed operator $T$ on a Hilbert space $H$ is called Fredholm if $\dim \Ker \ T < \infty$ and $\dim \Coker \ T < \infty$. Then $\Ran \ T = T \ \Dom \ T$ is automatically closed \cite[Thm.~I.3.2]{Edmunds1}.

We recall some facts about the spectrum of an unbounded, closed, densely defined operator $T$ on a Hilbert space $H$ (cf. \cite{Reed1}).
The resolvent set $\rho_H(T)$ consists of all $s \in \co$ such that $T - s I$ is injective, surjective and has a bounded inverse.
The spectrum is the complement $\sigma_H=\sigma_H (T) = \co \setminus \rho_H (T)$.
It is partitioned as $\sigma_H=\sigma_{H,d} \bigcup \sigma_{H,\rm ess}$ into the essential spectrum $\sigma_{H,\rm ess}$ and the discrete spectrum $\sigma_{H,d}= \sigma_H \setminus \sigma_{H,\rm ess}$.
There are several definitions of the essential spectrum (cf. \cite{Edmunds1,Reed1,Kato1}). We use the following: $s \in \sigma_{H,\rm ess}$ if and only if $T-sI$ is not a Fredholm operator.
If $T$ is selfadjoint we have the following characterization of $\sigma_{H,d}$ \cite[Thm.~IX.1.6]{Edmunds1}.
\begin{equation}\label{discreteselfadjoint1}
s \in \sigma_{H,d} \quad \mbox{if and only if $s$ is isolated in $\sigma_H$ and $\ \dim \Ker (T - sI) < \infty$}.
\end{equation}
Here $s$ isolated in $\sigma_H$ means that there exists an $\ep>0$ such that $(s-\ep,s+\ep) \cap \sigma_H=\{s\}$.

The spectrum is also partitioned as $\sigma_H = \sigma_{H,\rm p} \bigcup \sigma_{H,\rm cont} \bigcup \sigma_{H,\rm res}$ \cite{Edmunds1}. The point spectrum $\sigma_{H,\rm p}$ consists of all $s \in \co$ such that $T - s I$ is not injective,
the continuous spectrum $\sigma_{H,\rm cont}$ consists of all $s \in \co$ such that $T - s I$ is injective and
$\overline{\Ran (T - s I)} = H$, and
the residual spectrum $\sigma_{H,\rm res}$ is all $s \in \co$ such that $T - s I$ is injective and
$\overline{\Ran (T - s I)} \subsetneq H$.
Let $T$ be a densely defined closable, not necessarily closed, operator $T$ with closure $\overline T$.
Then $s \notin \sigma_H(\overline T)$ if and only if for some $C>0$
\begin{equation}\label{notspectrum}
\begin{aligned}
\| (T - s I) f \|_H & \geq C \| f \|_H, \quad f \in \Dom \ T, \\
\| (T - s I)^* f \|_H & \geq C \| f \|_H, \quad f \in \Dom \ T^*,
\end{aligned}
\end{equation}
where $T^*$ denotes the adjoint of $T$.

In \cite[Thm.~11.1]{Shubin2} Shubin proves the following result for a.p. pseudodifferential operators, considered as possibly unbounded operators on $L^2(\rr d)$. Here $\overline{a(x,D)}$ denotes the closure in $L^2(\rr d)$ of a closable operator $a(x,D)$ with original domain $C_c^\infty(\rr d)$.

\begin{prop}\label{shubinfredholm1}
(Shubin \cite{Shubin2})
Suppose that \eqref{rhodeltakrav1} holds and $a \in APS_{\rho,\delta}^0$,
or suppose that $0 \leq \delta < \rho \leq 1$ and $a \in APHS_{\rho,\delta}^{m,m_0}$ where $m \geq m_0>0$.
Let $s \in \co$.
If the operator $\overline{a(x,D)} - sI$ is Fredholm on $L^2(\rr d)$ then it has a bounded inverse.
\end{prop}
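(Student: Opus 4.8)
The plan is to argue by contradiction. Suppose $T := \overline{a(x,D)} - sI$ is Fredholm on $L^2(\rr d)$ but has no bounded inverse. Since $T$ is Fredholm its range is closed, so the failure of invertibility means $\Ker T \neq \{0\}$ or $\Coker T = \Ker T^{*} \neq \{0\}$. The operator $T^{*}$ acts as $\overline{a^{+}(x,D)} - \overline{s}\,I$, where $a^{+}\in APS_{\rho,\delta}^{0}$ (resp. $a^{+}\in APHS_{\rho,\delta}^{m,m_{0}}$) is the formal adjoint symbol, so the two cases are symmetric and it suffices to derive a contradiction from $K := \Ker T \neq \{0\}$; then $k := \dim K$ is finite and positive. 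The target contradiction is to exhibit, for a suitable small $\ep>0$, a system of $k+1$ almost orthonormal vectors inside the $k$-dimensional space $K$.

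The first ingredient is approximate translation invariance. With $a_{\tau}(y,\xi) = a(y+\tau,\xi)$ one has the covariance $a_{\tau}(x,D) = T_{-\tau}\circ a(x,D)\circ T_{\tau}$ coming from \eqref{kndef2}--\eqref{kndef3}, and since $a$ and all its $x$-derivatives are almost periodic uniformly in the relevant symbol seminorms, the map $\tau \mapsto a_{\tau}(x,D) - a(x,D)$ is almost periodic with values in $\Lop(L^{2}(\rr d))$ when $a\in APS_{\rho,\delta}^{0}$ (using the Calder\'on--Vaillancourt estimate, i.e.\ boundedness of the symbol-to-operator map controlled by finitely many seminorms), and with values in $\Lop(H^{m}(\rr d),L^{2}(\rr d))$ in the hypoelliptic case. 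Hence for every $\ep>0$ the set
\[
\Omega_{\ep} = \{\, \tau\in\rr d : \| a_{\tau}(x,D) - a(x,D)\|_{\Lop(L^{2})} < \ep \,\}
\]
(respectively with the $\Lop(H^{m},L^{2})$ norm) is relatively dense. In the hypoelliptic case one first uses the parametrix $b(x,D)$ of \eqref{regularizer1}: if $Tf=0$ then $a(x,D)f=sf$ gives $f = s\,b(x,D)f + r(x,D)f$, and the bootstrap $f\in H^{t}\Rightarrow f\in H^{t+m_{0}}$ (with $m_{0}>0$) yields $K\subseteq H^{\infty}(\rr d)$, so each $f\in K$ lies in the regularity class where the $H^{m}\to L^{2}$ bound applies, and $T_{\tau}f\in\Dom\overline{a(x,D)}$.

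The second ingredient is the Fredholm coercivity estimate: there is $C>0$ with $\|g\|_{L^{2}}\le C\|Tg\|_{L^{2}}$ for every $g\in\Dom T$ with $g\perp K$. Now fix $f\in K$, $\|f\|_{L^{2}}=1$, and $\tau\in\Omega_{\ep}$; the covariance identity gives $a(x,D)(T_{\tau}f) = T_{\tau}\big(a_{\tau}(x,D)f\big) = T_{\tau}\big(sf + (a_{\tau}(x,D)-a(x,D))f\big)$, so $T(T_{\tau}f) = T_{\tau}(a_{\tau}(x,D)-a(x,D))f$ has $L^{2}$-norm at most $\ep$, and writing $T_{\tau}f = h_{\tau}+g_{\tau}$ with $h_{\tau}\in K$, $g_{\tau}\perp K$, we get $\|g_{\tau}\|_{L^{2}}\le C\ep$, i.e.\ $\operatorname{dist}_{L^{2}}(T_{\tau}f,K)\le C\ep$. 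Since $\Omega_{\ep}$ is relatively dense it contains, for any $R>0$, points $\tau_{1},\dots,\tau_{k+1}$ that are pairwise at distance at least $R$; for $g_{i}:=T_{\tau_{i}}f$ one has $\langle g_{i},g_{j}\rangle_{L^{2}} = \langle f, T_{\tau_{j}-\tau_{i}}f\rangle_{L^{2}}$, which tends to $0$ as $R\to\infty$ for each of the finitely many pairs $i\neq j$ (because $T_{\sigma}f\rightharpoonup 0$ in $L^{2}$ as $|\sigma|\to\infty$). Choosing $\ep$ small and then $R$ large, the associated vectors $h_{i}\in K$ with $\|g_{i}-h_{i}\|_{L^{2}}\le C\ep$ have $\|h_{i}\|$ close to $1$ and $|\langle h_{i},h_{j}\rangle|$ close to $0$ for $i\neq j$, so their Gram matrix is strictly diagonally dominant, hence positive definite, hence $h_{1},\dots,h_{k+1}$ are linearly independent in the $k$-dimensional space $K$ — a contradiction. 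Therefore $\Ker T=\{0\}$, and the same argument applied to $T^{*}$ gives $\Coker T=\Ker T^{*}=\{0\}$; a Fredholm operator with trivial kernel and cokernel is a bijection with closed range, hence has a bounded inverse by the open mapping theorem.

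I expect the main obstacle to be the first ingredient: one must check carefully that the $x$-derivatives of $a$ are almost periodic uniformly in the symbol seminorms, so that $\tau\mapsto a_{\tau}(x,D)$ is genuinely almost periodic in the operator topology used, and in the hypoelliptic case one must combine this with the parametrix-based regularity $K\subseteq H^{\infty}$ and with Shubin's identification of the minimal and maximal realizations in order to justify $T^{*}=\overline{a^{+}(x,D)}-\overline{s}\,I$ and the covariance identity at the level of the closed operators. Alternatively, when $m\le 0$ one can bypass the translation argument entirely: here $T\in\mathscr A_{B}$, the orthogonal projection onto $\Ker T$ also lies in $\mathscr A_{B}$, and since $\mathscr A_{B}$ is a factor of type II$_{\infty}$ it contains no nonzero finite-rank projection, forcing $\Ker T=\{0\}$, and likewise $\Ker T^{*}=\{0\}$.
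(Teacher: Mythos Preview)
The paper does not give its own proof of this proposition; it is stated as Shubin's result \cite{Shubin2} and only the $B^2$ analogue (Proposition~\ref{besicovitchfredholm1}) is proved in detail, explicitly following Shubin's argument. Comparing your proposal against that proof, the core mechanism is the same: start from a nonzero kernel element, use translations by $\ep$-almost periods of the symbol to manufacture more near-kernel vectors than $\dim\Ker$ allows, and derive a contradiction from the Fredholm coercivity estimate.

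Two organizational differences are worth noting. First, the paper (and Shubin) approximate the kernel element by a compactly supported function and translate that; you work with $f\in K$ itself and rely on $K\subseteq H^\infty$ via the parametrix in the hypoelliptic case. Your route is cleaner for the $L^2$ statement, while the paper's detour through $C_c^\infty$ is needed because it is really setting up the $B^2$ proof, where kernel elements do not live in $L^2$. Second, the final contradiction is packaged differently: you project the $k+1$ translates onto $K$ and argue their Gram matrix is diagonally dominant, hence they are independent in a $k$-dimensional space; the paper instead builds an $N$-dimensional subspace $V$ of near-kernel vectors with $N>\dim K$, invokes the elementary Lemma~\ref{hilbertsubspacelemma1} to find $0\neq\psi\in V\cap K^\perp$, and contradicts coercivity directly. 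These are dual ways of exploiting the same dimension mismatch.

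Your flagged obstacle --- that $\tau\mapsto a_\tau(x,D)$ is almost periodic in the relevant operator norm --- is exactly the point where the paper's proof invokes the precompactness of $\{b_s(\cdot,\xi)\eabs{\xi}^{-2m'}\}_{\xi}$ in $C_{\rm ap}(\rr d)$ to obtain common relatively dense almost periods; your Calder\'on--Vaillancourt reduction to finitely many symbol seminorms, each controlled by such a precompact family, is the right way to close this. The type~II$_\infty$ shortcut you sketch for $m\le 0$ (no nonzero finite-rank projections in a II$_\infty$ factor) is correct and does not appear in the paper's treatment of this proposition, though the relevant von Neumann algebra structure is discussed later in Section~6.
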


As a corollary it follows that the whole spectrum of certain a.p. pseudodifferential operators, considered as possibly unbounded operators on $L^2(\rr d)$, is essential.

\begin{cor}\label{shubinessential1}
(Shubin \cite{Shubin2})
Suppose that \eqref{rhodeltakrav1} holds and $a \in APS_{\rho,\delta}^0$,
or suppose that $0 \leq \delta < \rho \leq 1$ and $a \in APHS_{\rho,\delta}^{m,m_0}$ where $m \geq m_0>0$.
Then $\sigma_{L^2}(\overline{a(x,D)}) = \sigma_{L^2,\rm ess}(\overline{a(x,D)})$.
\end{cor}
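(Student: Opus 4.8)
The plan is short: it suffices to show that the discrete spectrum $\sigma_{L^2,d}(\overline{a(x,D)})$ is empty. Indeed, by construction $\sigma_{L^2} = \sigma_{L^2,d} \cup \sigma_{L^2,\rm ess}$ with $\sigma_{L^2,d} = \sigma_{L^2} \setminus \sigma_{L^2,\rm ess}$, and the inclusion $\sigma_{L^2,\rm ess}(\overline{a(x,D)}) \subseteq \sigma_{L^2}(\overline{a(x,D)})$ holds trivially; hence $\sigma_{L^2,d}(\overline{a(x,D)}) = \emptyset$ is equivalent to the claimed identity $\sigma_{L^2}(\overline{a(x,D)}) = \sigma_{L^2,\rm ess}(\overline{a(x,D)})$.

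First I would fix an arbitrary $s \in \sigma_{L^2,d}(\overline{a(x,D)})$ and unwind the definition: on the one hand $s \in \sigma_{L^2}(\overline{a(x,D)})$, and on the other hand $s \notin \sigma_{L^2,\rm ess}(\overline{a(x,D)})$, which by the adopted definition of the essential spectrum means that $\overline{a(x,D)} - sI$ is a Fredholm operator on $L^2(\rr d)$. The hypotheses of the corollary coincide verbatim with those of Proposition~\ref{shubinfredholm1} (the two alternatives $a \in APS_{\rho,\delta}^0$ under \eqref{rhodeltakrav1}, respectively $a \in APHS_{\rho,\delta}^{m,m_0}$ with $m \ge m_0 > 0$ under $0 \le \delta < \rho \le 1$, are carried along unchanged), so that proposition applies and yields that $\overline{a(x,D)} - sI$ has a bounded inverse. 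Thus $\overline{a(x,D)} - sI$ is injective, surjective and boundedly invertible, i.e. $s$ belongs to the resolvent set $\rho_{L^2}(\overline{a(x,D)})$, contradicting $s \in \sigma_{L^2}(\overline{a(x,D)})$. Hence no such $s$ exists and $\sigma_{L^2,d}(\overline{a(x,D)}) = \emptyset$, which proves the corollary.

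I do not anticipate any genuine obstacle in this deduction: all of the analytic substance is absorbed into Proposition~\ref{shubinfredholm1}, and what remains is the elementary observation that a point of $\sigma_{L^2}$ at which the shifted operator is Fredholm would be forced, by that proposition, to lie in the resolvent set — an impossibility. The only minor bookkeeping is to confirm that "Fredholm plus bounded inverse" is indeed synonymous with membership in $\rho_{L^2}$, which is immediate since a boundedly invertible operator is in particular bijective.
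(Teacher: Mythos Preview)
Your proposal is correct and is precisely the intended deduction: the paper does not give an explicit proof of this corollary but simply records it as an immediate consequence of Proposition~\ref{shubinfredholm1}, and your argument spells out exactly that implication.
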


The corollary gives the following result that nonzero operators with symbols in $APS_{\rho,\delta}^0$ cannot be compact on $L^2(\rr d)$. For related results, see \cite[p.~292]{Coburn1} and \cite[Cor.~5.2]{Rabinovich1}.

\begin{prop}\label{compactimplieszero1}
If \eqref{rhodeltakrav1} holds and $a \in APS_{\rho,\delta}^0$ then $a(x,D)$, considered as an operator on $L^2(\rr d)$, is not compact unless it is zero.
\end{prop}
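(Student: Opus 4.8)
The plan is to reduce to a self-adjoint operator and invoke Corollary \ref{shubinessential1}. Denote by $A \in \Lop(L^2(\rr d))$ the bounded operator associated with $a(x,D)$ (boundedness by \eqref{normlikhet1}), and suppose $A$ is compact; we must show $A=0$. The core observation, which I would record for an arbitrary symbol $b \in APS_{\rho,\delta}^0$ whose associated $L^2$-operator $B$ is compact, is that then $\sigma_{L^2}(B) \subseteq \{0\}$. Indeed, for $s \neq 0$ we have $B - sI = -s(I - s^{-1}B)$ with $s^{-1}B$ compact, so $B-sI$ is Fredholm by the Riesz theory; hence $s \notin \sigma_{L^2,\rm ess}(B)$, i.e. $\sigma_{L^2,\rm ess}(B) \subseteq \{0\}$. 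On the other hand Corollary \ref{shubinessential1} applies (since \eqref{rhodeltakrav1} holds, $b \in APS_{\rho,\delta}^0$, and the $L^2$-closure of $b(x,D)|_{C_c^\infty(\rr d)}$ is the bounded operator $B$), giving $\sigma_{L^2}(B) = \sigma_{L^2,\rm ess}(B)$, hence the claim.

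Applying this with $B = A$ only tells us that $A$ is quasinilpotent, which is not enough since $A$ need not be normal. So instead I would apply it to $b := a^+ \wpr_0 a$, where $a^+ \in APS_{\rho,\delta}^0$ is the formal-adjoint symbol of $a$ recalled before Corollary \ref{unitaryequivalent1}; then $b \in APS_{\rho,\delta}^0$ by \eqref{apsymbkont1}. On $C_c^\infty(\rr d)$ one has $b(x,D) = a^+(x,D)\,a(x,D)$, and since $a(x,D)$ maps $\mathscr S(\rr d)$ into itself while $a^+(x,D)$ agrees on $\mathscr S(\rr d)$ with the Hilbert-space adjoint $A^*$, the $L^2$-closure of $b(x,D)$ equals $A^*A$, which is bounded, positive, self-adjoint, and compact (product of a compact and a bounded operator). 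By the previous paragraph $\sigma_{L^2}(A^*A) \subseteq \{0\}$, hence $\sigma_{L^2}(A^*A) = \{0\}$ as the spectrum is nonempty. Since $A^*A$ is self-adjoint its norm equals its spectral radius, so $\|A^*A\|_{\Lop(L^2)} = 0$, and therefore $\|Af\|_{L^2}^2 = (A^*Af,f)_{L^2} = 0$ for all $f \in L^2(\rr d)$, i.e. $A=0$, which is the assertion.

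The only genuinely essential point — and the main, if mild, obstacle — is precisely the passage from the inclusion $\sigma_{L^2}(A) \subseteq \{0\}$ to $A = 0$: a compact quasinilpotent operator need not vanish, so one must pass to a self-adjoint operator, here $A^*A$ (alternatively one could use the self-adjoint operators $\tfrac12(A+A^*)$ and $\tfrac1{2i}(A-A^*)$, whose symbols also lie in $APS_{\rho,\delta}^0$, and conclude that each vanishes). Everything else — boundedness of $A$, the identification of the $L^2$-closure of $b(x,D)$ with $A^*A$, and the Fredholmness of $B - sI$ for $s \neq 0$ — is routine.
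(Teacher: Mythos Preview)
Your proof is correct and follows essentially the same route as the paper: pass to the self-adjoint operator $A^*A$ (with symbol $a^+ \wpr_0 a \in APS_{\rho,\delta}^0$), use Corollary \ref{shubinessential1} to force its spectrum to be $\{0\}$, and conclude via the spectral radius formula. The only cosmetic difference is that the paper deduces $\sigma_{L^2}(A^*A)=\{0\}$ from the eigenvalue structure of a compact self-adjoint operator together with the characterization \eqref{discreteselfadjoint1} of the discrete spectrum, whereas you argue directly that $B-sI$ is Fredholm for $s\neq 0$ by Riesz theory; both are standard and equivalent here.
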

\begin{proof}
Suppose $a(x,D)$ is compact as an operator on $L^2(\rr d)$. Then $b(x,D) = a(x,D)^* a(x,D)$ is compact and selfadjoint. The spectrum of $b(x,D)$ with respect to $L^2(\rr d)$ is a sequence of eigenvalues $\mu \in \ro$ such that $\dim \Ker (b(x,D)-\mu I) < \infty$ if $\mu \neq 0$, with the only possible limit point zero \cite[Thms.~I.1.7 and I.1.9]{Edmunds1}. But according to Corollary \ref{shubinessential1} the spectrum of $b(x,D)$ is essential. By \eqref{discreteselfadjoint1} the spectrum is thus $\{ 0 \}$. A selfadjoint operator $b(x,D)$ with spectrum equal to zero is zero, due to the formula (see \cite[Thm.~VI.6]{Reed1})
\begin{equation}\nonumber
\sup_{s \in \sigma(b(x,D))} |s| = \| b(x,D) \|_{\Lop(L^2)}.
\end{equation}
Hence $a(x,D)^* a(x,D) = 0$ which is equivalent to $a(x,D) = 0$.
\end{proof}

The next result concerns the statements corresponding to Proposition \ref{shubinfredholm1} and Corollary \ref{shubinessential1} when the operators act on $B^2(\rr d)$ instead of $L^2(\rr d)$. A brief sketch of a proof of the following proposition is given in \cite[pp.~189--190]{Rozenblum1}. We give a detailed proof based on the ideas in the proof of  \cite[Thm.~11.1]{Shubin2}.
Here $\overline{a(x,D)}$ denotes the closure in $B^2(\rr d)$ of a closable operator $a(x,D)$ with original domain $TP(\rr d)$.

\begin{prop}\label{besicovitchfredholm1}
Suppose that \eqref{rhodeltakrav1} holds and $a \in APS_{\rho,\delta}^0$,
or suppose that $0 \leq \delta < \rho \leq 1$ and $a \in APHS_{\rho,\delta}^{m,m_0}$ where $m \geq m_0>0$.
Let $s \in \co$.
If the operator $\overline{a(x,D)} - sI$ is Fredholm on $B^2(\rr d)$ then it has a bounded inverse.
\end{prop}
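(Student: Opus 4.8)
The plan is to reduce to $s=0$, then to showing that $\Ker\,\overline{a(x,D)}=\{0\}$ in $B^2(\rr d)$, and to prove the latter by translation covariance on $B^2(\rr d)$ in place of the weak null convergence of translations on $L^2(\rr d)$ that drives the proof of \cite[Thm.~11.1]{Shubin2}. For the reduction to $s=0$: if $a\in APS_{\rho,\delta}^0$ then $a-s\in APS_{\rho,\delta}^0$; if $a\in APHS_{\rho,\delta}^{m,m_0}$ with $m_0>0$, then $|a(x,\xi)-s|\ge\tfrac12\,C\,\eabs{\xi}^{m_0}$ for $|\xi|$ large by \eqref{hypoelliptic1}, and since $\partial_\xi^\alpha\partial_x^\beta(a-s)=\partial_\xi^\alpha\partial_x^\beta a$ for $(\alpha,\beta)\ne(0,0)$ the quotient bounds in \eqref{hypoelliptic1} survive up to the bounded factor $|a\,(a-s)^{-1}|$, so $a-s\in APHS_{\rho,\delta}^{m,m_0}$; moreover the formal adjoint $a^+$ lies in the same class (in the hypoelliptic case because $a^+=\overline a+r$ with $r\in APS_{\rho,\delta}^{m_0-\rho+\delta}$ and $\rho>\delta$). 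Fredholmness of $\overline{a(x,D)}$ on $B^2(\rr d)$ gives a closed range, hence a constant $c>0$ with $\|\overline{a(x,D)}u\|_B\ge c\operatorname{dist}_B(u,\Ker\,\overline{a(x,D)})$ for $u\in\Dom\,\overline{a(x,D)}$, and $\dim\Ker\,\overline{a(x,D)}=:k<\infty$. It therefore suffices to prove $\Ker\,\overline{a(x,D)}=\{0\}$ for every symbol satisfying the hypotheses: applied to $a^+$ this kills the cokernel $\Coker\,\overline{a(x,D)}\cong\Ker\,(\overline{a(x,D)})^*$, and together with the closed range it makes $\overline{a(x,D)}-sI$ invertible.

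\emph{Translation mechanism.} Suppose $k\ge1$ and fix an orthonormal basis $v_1,\dots,v_k$ of $\Ker\,\overline{a(x,D)}$. For $\tau\in\rr d$ the translation $T_\tau$ is unitary on $B^2(\rr d)$ and $T_\tau\,\overline{a(x,D)}\,T_{-\tau}=\overline{a^{(-\tau)}(x,D)}$ with $a^{(-\tau)}(x,\xi)=a(x-\tau,\xi)$ in the same symbol class and with the same constants; in particular $\|a^{(\tau)}(x,D)\|_{\Lop(B^2)}=\|a(x,D)\|_{\Lop(B^2)}$ by \eqref{normlikhet1}. Enumerate by $(\mu_j)_{j\ge1}$ the countable union of the Bohr frequency sets of $v_1,\dots,v_k$ together with a countable dense subset of $\rr d$. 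Since each $a(\cdot,\mu)\in C_{\rm ap}(\rr d)$ and joint $\ep$-almost periods of finitely many a.p.\ functions are relatively dense, one can pick $\tau_i\in\rr d$ with $|\tau_i|\to\infty$ and $\tau_i$ a joint $1/i$-almost period of $a(\cdot,\mu_1),\dots,a(\cdot,\mu_i)$. Splitting each $v_\ell$ into a finite Bohr--Fourier sum plus a $B^2$-small remainder and using the uniform operator bound gives $(a^{(\tau_i)}(x,D)-a(x,D))v_\ell\to0$ in $B^2(\rr d)$, whence, using $\overline{a(x,D)}v_\ell=0$, $\overline{a(x,D)}(T_{\tau_i}v_\ell)=T_{\tau_i}\big(\overline{a^{(\tau_i)}(x,D)}-\overline{a(x,D)}\big)v_\ell\to0$ in $B^2(\rr d)$. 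By the closed-range estimate $\operatorname{dist}_B(T_{\tau_i}v_\ell,\Ker\,\overline{a(x,D)})\to0$ for each $\ell$; as $\{T_{\tau_i}v_\ell\}_\ell$ is orthonormal, for large $i$ its orthogonal projection onto $\Ker\,\overline{a(x,D)}$ is again a basis of that $k$-dimensional space, so the $k$-planes $T_{\tau_i}(\Ker\,\overline{a(x,D)})=\Ker\,\overline{a^{(-\tau_i)}(x,D)}$ converge to $\Ker\,\overline{a(x,D)}$ in the Grassmannian of $B^2(\rr d)$.

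\emph{Crux and conclusion.} The main obstacle — and where the detailed work lies, since on $B^2(\rr d)$ the translates never escape to infinity — is to upgrade this asymptotic invariance to $T_\tau(\Ker\,\overline{a(x,D)})=\Ker\,\overline{a(x,D)}$ for all $\tau\in\rr d$. I would argue from: the pointwise bound $\operatorname{dist}_B(T_\tau v,\Ker\,\overline{a(x,D)})\le c^{-1}\|\overline{a(x,D)}\,T_\tau v\|_B=c^{-1}\|(\overline{a^{(\tau)}(x,D)}-\overline{a(x,D)})v\|_B$, valid for every $\tau$, whose right-hand side is an almost periodic function of $\tau$ vanishing at $\tau=0$ and along $(\tau_i)$; the almost periodicity of $\tau\mapsto T_\tau(\Ker\,\overline{a(x,D)})$ into the precompact Grassmannian; and, in the delicate case that $\Ker\,\overline{a(x,D)}$ sits over only finitely many Bohr frequencies, the mean-value identity $\mathscr M_\tau\|\overline{a(x,D)}\,T_\tau v\|_B^2=\sum_{\mu}|\wh v_\mu|^2\,\mathscr M_x(c(x,\mu))$ with $c$ the symbol of $a(x,D)^+a(x,D)$. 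A finite-dimensional $T_\tau$-invariant subspace of $B^2(\rr d)=L^2(\rrb d)$ is necessarily $\linspan(e_{\mu_1},\dots,e_{\mu_k})$; then $\overline{a(x,D)}e_{\mu_\ell}=a(\cdot,\mu_\ell)e_{\mu_\ell}=0$ forces $a(\cdot,\mu_\ell)\equiv0$, and by continuity of $\mu\mapsto\|a(\cdot,\mu)\|_{L^\infty}$ there is $\mu\in\rr d\setminus\{\mu_1,\dots,\mu_k\}$ with $\|a(\cdot,\mu)\|_B\le\|a(\cdot,\mu)\|_{L^\infty}<c$; since $e_\mu\perp\Ker\,\overline{a(x,D)}$ and $\|e_\mu\|_B=1$, this contradicts $\|\overline{a(x,D)}e_\mu\|_B\ge c\operatorname{dist}_B(e_\mu,\Ker\,\overline{a(x,D)})=c$. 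Hence $k=0$, which by the reduction above finishes the proof.

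\emph{A structural shortcut.} One can also avoid the Grassmannian step: by Lemma \ref{simplification1}, Corollary \ref{unitaryequivalent1} and Proposition \ref{equivalent1}, together with the realization \eqref{kvantvekt0} of $U(a)(D)$ as the direct integral $\int_{\rr d}^{\oplus}U(a)(\xi)\,d\xi$ whose fibres are unitarily equivalent to $a(x,D)$ on $B^2(\rr d)$ via the $\xi$-continuous unitaries $\mathscr F_B R M_{-\xi}$, the von Neumann algebra generated on $B^2(\rr d)$ by $\{a(x,D):a\in APS_{\rho,\delta}^0\}$ is, up to amplification by $L^2(\rr d)$, isomorphic to the one generated on $L^2(\rr d,l^2(\rr d))$ by $\{U(a)(D)\}$, which is a factor of type $\mathrm{II}_\infty$; the former is then a type II factor and has no nonzero finite-rank projections. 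Since $\overline{a(x,D)}$ is affiliated with this algebra (in the hypoelliptic case through its parametrix $b(x,D)\in APS_{\rho,\delta}^{-m_0}\subseteq APS_{\rho,\delta}^0$ from \eqref{regularizer1}), the orthogonal projection onto $\Ker\,\overline{a(x,D)}$ lies in it, hence vanishes by Fredholmness; applying this to $a^+(x,D)$ and using the closed range again gives invertibility. I expect the write-up to follow the first route to stay close to the ideas of \cite{Shubin2}, the subtle point being the passage from asymptotic to genuine translation invariance of a finite-dimensional kernel together with the matching statement for the cokernel.
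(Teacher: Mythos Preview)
Both routes in your proposal have genuine gaps.

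\textbf{Route 1 (translation invariance of the kernel).} You correctly flag the crux: upgrading the convergence $T_{\tau_i}(\Ker\,\overline{a(x,D)})\to\Ker\,\overline{a(x,D)}$ along a chosen sequence of almost-periods to translation invariance for \emph{every} $\tau$. But your sketch (``I would argue from: the pointwise bound \dots\ almost periodicity \dots\ mean-value identity'') is not a proof. An almost periodic function of $\tau$ can vanish at $0$ and along a sequence $|\tau_i|\to\infty$ without vanishing identically, so the smallness of $\|(\overline{a^{(\tau)}(x,D)}-\overline{a(x,D)})v\|_B$ along $(\tau_i)$ does not force it to vanish for all $\tau$. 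The closed-range bound only yields $\operatorname{dist}_B(T_\tau v,\Ker)\le c^{-1}\|\overline{a(x,D)}T_\tau v\|_B$, which is small along $(\tau_i)$ but need not be zero elsewhere. Without exact invariance you cannot conclude that $\Ker$ is spanned by characters, and the contradiction never fires. (There is also a secondary issue in the hypoelliptic case: your ``uniform operator bound'' used to control the remainder in the Bohr--Fourier splitting of $v_\ell$ presupposes $a(x,D)$ bounded on $B^2$.)

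\textbf{Route 2 (von Neumann algebra shortcut).} The assertion that the von Neumann algebra generated on $B^2(\rr d)$ by $\{a(x,D):a\in APS_{\rho,\delta}^0\}$ is a type II factor is false. Under $\mathscr F_B:B^2\to l^2(\rr d)$, Fourier multipliers $a(x,\xi)=\phi(\xi)$ become diagonal operators $e_\mu\mapsto\phi(\mu)e_\mu$ and characters $a(x,\xi)=e_\la(x)$ become shifts $e_\mu\mapsto e_{\mu+\la}$; any operator commuting with all of these must be scalar, so the double commutant is all of $\Lop(l^2(\rr d))$, a type I$_\infty$ factor, which does contain nonzero finite-rank projections. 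The direct-integral formula $U(a)(D)=\int^\oplus U(a)(\xi)\,d\xi$ with fibres unitarily equivalent to $a(x,D)|_{B^2}$ (Lemma \ref{simplification1}) is correct, but it is \emph{not} an amplification: the intertwining unitaries $\mathscr F_B R M_{-\xi}$ depend on $\xi$, and the resulting algebra on $L^2(\rr d,l^2)$ is not $\Lop(B^2)$ tensored with anything.

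\textbf{What the paper actually does.} The paper does not try to prove translation invariance of the kernel. It shows directly that $0<M=\dim\Ker A_s<\infty$ together with closed range is impossible by a dimension-counting contradiction that passes through $L^2$. From an approximate kernel element $f\in TP(\rr d)$ one transfers to $L^2$ via cutoffs and \cite[Lemma 4.1]{Shubin4} to obtain $h\in C_c^\infty$ with $\|A_s h\|_{L^2}$ small. In $L^2$, large translations (within a relatively dense set of almost-periods of the symbol) produce $N>M$ compactly supported, hence orthonormal, functions $h_1,\dots,h_N$ with $\|A_s h_k\|_{L^2}$ small. One then transfers back to $B^2$ by convolving with an a.p.\ approximate identity (\cite[Lemmata 4.2--4.3]{Shubin4}), orthonormalizes via the Gram matrix, and obtains an $N$-dimensional subspace $V\subseteq B^2$ on which $\|A_s\psi\|_B<\ep\|\psi\|_B$. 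Since $N>M$, the elementary Lemma \ref{hilbertsubspacelemma1} forces $V\cap(\Ker A_s)^\perp\ne 0$, contradicting the closed-range lower bound. The translation mechanism lives in $L^2$, where disjoint supports give exact orthogonality; the passage $B^2\to L^2\to B^2$ is the substitute for the step you were unable to carry out directly in $B^2$.
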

\begin{proof}
Set $A_s=\overline{a(x,D)} - sI$. We will prove the implication
\begin{equation}\label{propstatement1}
0 < \dim \Ker \ A_s < \infty \quad \Longrightarrow \quad \Ran \ A_s \quad \mbox{is not closed}.
\end{equation}
This implies the claimed result.
In fact, suppose \eqref{propstatement1} holds.
The symbol $a^+$ of the formal adjoint of $a(x,D)$ behaves as follows.
In the first case \eqref{rhodeltakrav1} and $a \in APS_{\rho,\delta}^0$, we have $a^+ \in APS_{\rho,\delta}^{m}$ \cite[Thm.~3.4]{Shubin1}.
In the second case $0 \leq \delta < \rho \leq 1$, $m \geq m_0>0$ and $a \in APHS_{\rho,\delta}^{m,m_0}$,
we have $a^+ \in APHS_{\rho,\delta}^{m,m_0}$ (cf. \cite[Prop.~I.5.3]{Shubin5}).
Denoting $A_s^+=\overline{a^+(x,D)} - sI$,
it thus it follows from \eqref{propstatement1}
\begin{equation}\label{propstatement2}
0 < \dim \Ker \ A_s^+ < \infty \quad \Longrightarrow \quad \Ran \ A_s^+ \quad \mbox{is not closed}.
\end{equation}
According to \cite[Thm.~4.2]{Shubin3} we have $A_s^+ = A_s^*$ when $a \in APHS_{\rho,\delta}^{m,m_0}$, $0 \leq \delta < \rho \leq 1$ and $m \geq m_0>0$.
(For \eqref{rhodeltakrav1} and $a \in APS_{\rho,\delta}^0$ this equality is trivial.)
Suppose $A_s$ is Fredholm. Then $\Ran \ A_s$ is closed, and by \cite[Thm.~IV.5.13]{Kato1} also
$\Ran \ A_s^*$ is closed.
The implication \eqref{propstatement1} gives
$$
\dim \Ker \ A_s = 0 \quad \mbox{or} \quad \dim \Ker \ A_s = \infty,
$$
and since $\Ker \ A_s^*= (\Ran \ A_s )^\perp$ \cite{Kato1},
\eqref{propstatement2} together with $A_s^+ = A_s^*$ give
$$
\dim \Coker \ A_s = 0 \quad \mbox{or} \quad \dim \Coker \ A_s = \infty.
$$
Hence $A_s$ cannot be Fredholm unless $\dim \Ker \ A_s = \dim \Coker \ A_s = 0$, which by the Closed Graph Theorem implies that $A_s$ has a bounded inverse.

It remains to show the implication \eqref{propstatement1}.
Suppose that $0 < M=\dim \Ker \ A_s < \infty$ and $\Ran \ A_s$ is closed. By \cite[Thm.~I.3.4]{Edmunds1} there exists $\ep>0$ such that
\begin{equation}\label{reductioadabsurdum1}
\| A_s \psi \|_B \geq \ep \| \psi \|_B, \quad \psi \in (\Ker \ A_s)^\perp.
\end{equation}
Let $N>M$ be an integer and let $\delta < \ep/N$.
Pick $g \in \Ker \ A_s$ that satisfies $\| g \|_B=1$ and $A_s g = 0$. Since $A_s$ is the closure of $a(x,D) - sI$ with domain $TP(\rr d)$ there exists $f \in TP(\rr d)$ such that $\| f \|_B=1$ and $\| A_s f \|_B < \delta/4$.

Let $\varphi_R \in C_c^\infty(\rr d)$ be a cutoff function parametrized by $R>0$ such that $\varphi_R(x)=1$ for $|x| \leq R$ and $\varphi_R(x)=0$ for $|x| \geq R+R^\kappa$ where $0<\kappa<1$, and let $|B_R|$ denote the Lebesgue measure of the ball $B_R \subseteq \rr d$ of radius $R$ centered at the origin. Then by \cite[Lemma~4.1]{Shubin4} we have
\begin{equation}\nonumber
\| A_s f \|_B^2 = \lim_{R \rightarrow +\infty} |B_R|^{-1} \| A_s (\varphi_R f) \|_{L^2}^2, \quad f \in TP(\rr d).
\end{equation}
Hence
$$
1 = \| f \|_B^2 = \lim_{R \rightarrow +\infty} |B_R|^{-1} \| \varphi_R f \|_{L^2}^2
$$
and we obtain by choosing
$h=f_R/\| f_R \|_{L^2}$, where $f_R = |B_R|^{-1/2} \varphi_R f$ and $R>0$ is sufficiently large, a function $h \in C_c^\infty(\rr d)$ such that $\| h \|_{L^2}=1$ and
\begin{equation}\label{smallnorm0}
\| A_s h \|_{L^2} < \delta/3.
\end{equation}
In the next step we construct, as in the proof of \cite[Thm.~11.1]{Shubin2}, $(h_k)_{k=1}^N \subseteq C_c^\infty(\rr d)$ such that
\begin{align}
(h_k,h_\ell)_{L^2} & = \delta_{(k-\ell)}, \quad 1 \leq k,\ell \leq N, \label{orthonormality1} \\
\| A_s h_k \|_{L^2} & < \delta/3, \quad \quad 1 \leq k \leq N. \label{smallnorm1}
\end{align}
The construction proceeds inductively starting from $N=1$ and $h_1=h$.
Supposing that \eqref{orthonormality1} and \eqref{smallnorm1} hold for $N=j-1$,
one defines $h_j=h(\cdot-y_j)$ for some translation parameter $y_j \in \rr d$. For sufficiently large $y_j$ the orthonormality \eqref{orthonormality1} is satisfied for $1 \leq k,\ell \leq j$. The bound \eqref{smallnorm1} for $k=j$
is obtained by means of \eqref{smallnorm0} and a (large) translation $y_j$ within a set of common $\theta$-almost periods of $\{ b_s(\cdot,\xi) \eabs{\xi}^{-2m'} \}_{\xi \in \rr d}$, where $m'>m$, $b_s$ is the symbol of $(a(x,D)-sI)^+ (a(x,D)-sI)$, and $\theta>0$ is sufficiently small.
For any $\theta>0$, the set of common $\theta$-almost periods for $\{ b_s(\cdot,\xi) \eabs{\xi}^{-2m'} \}_{\xi \in \rr d}$ is relatively dense, since $\{ b_s(\cdot,\xi) \eabs{\xi}^{-2m'} \}_{\xi \in \rr d}$ is precompact in $C_{\rm ap}(\rr d)$.

Now we use \cite[Lemmata 4.2 and 4.3]{Shubin4}. More precisely, we define the convolution
$$
\psi_j = \chi_j * h \in C_{\rm ap}^\infty(\rr d), \quad h \in C_c^\infty(\rr d), \quad \chi_j \in C_{\rm ap}^\infty(\rr d),
$$
where the sequence $(\chi_j)_{j=1}^\infty \subseteq C_{\rm ap}^\infty(\rr d)$ is chosen in such a way that
\begin{align}
g_j(y) & := \mathscr M_x ( \chi_j(x-y) \overline{\chi_j(x)} ) \rightarrow \delta_0 \quad \mbox{in} \quad \mathscr S'(\rr d), \quad j \rightarrow \infty, \nonumber \\
\| A_s h \|_{L^2} & = \lim_{j \rightarrow \infty} \| A_s \psi_j \|_B, \quad h \in C_c^\infty(\rr d). \label{smallnorm2}
\end{align}
Defining $\psi_{k,j} = \chi_j * h_k$ this gives, using Fubini's theorem,
\begin{equation}\nonumber
\begin{aligned}
(\psi_{k,j}, \psi_{\ell,j})_B = \iint_{\rr {2d}} h_k(y) \ \overline{h_\ell(z)} \ g_j(y-z) \ dy dz \longrightarrow (h_k,h_\ell)_{L^2}=\delta_{(k-\ell)},
\end{aligned}
\end{equation}
as $j \rightarrow \infty$, $1 \leq k,\ell \leq N$. The hermitian (Gramian) matrix $G_j=(\psi_{k,j}, \psi_{\ell,j})_B \in \mathbb C^{N \times N}$ is thus nonsingular for $j$ sufficiently large. It may be factorized as
$$
G_j = U_j \Sigma_j U_j^+
$$
where $U_j \in \mathbb C^{N \times N}$ is unitary and $\Sigma_j \in \mathbb C^{N \times N}$ is diagonal with diagonal elements $(\sigma_{j,k})_{k=1}^N$ equal to the nonzero eigenvalues of $G_j$. By Ger\v sgorin's theorem \cite{Horn1}, $\sigma_{j,k} \rightarrow 1$ as $j \rightarrow \infty$ for each $1 \leq k \leq N$.

We define
$$
\wt \psi_{k,j} = \sum_{n=1}^N (U_j^+)_{k,n} \psi_{n,j}, \quad \gamma_{k,j} = \wt \psi_{k,j}/\| \wt \psi_{k,j} \|_B, \quad 1 \leq k \leq N.
$$
Then $(\gamma_{k,j})_{k=1}^N \subseteq B^2 (\rr d)$ is an orthonormal system and $\| \wt \psi_{k,j} \|_B^2 = \sigma_{j,k}$ for $1 \leq k \leq N$.
By \eqref{smallnorm1} and \eqref{smallnorm2},  $\| A_s \psi_{n,j} \|_B < \delta/2$ for $1 \leq n \leq N$, for $j$ sufficiently large.
Therefore the fact that $U_j$ is unitary and $\sigma_{j,k} \rightarrow 1$ give
\begin{equation}\label{penultimastima1}
\| A_s \gamma_{k,j} \|_B \leq \sigma_{j,k}^{-1/2} \sum_{n=1}^N | (U_j^+)_{k,n}| \| A_s \psi_{n,j} \|_B
\leq \sigma_{j,k}^{-1/2} \delta/2 \sqrt{N} \leq \delta \sqrt{N}
\end{equation}
for all $1 \leq k \leq N$, provided $j$ is sufficiently large.

Let $\psi \in B^2(\rr d)$ belong to the subspace $V$ spanned by $(\gamma_{k,j})_{k=1}^N$. Then $\psi = \sum_{k=1}^N c_k \gamma_{k,j}$, $\| \psi \|_B^2 = \sum_{k=1}^N |c_k|^2$, and \eqref{penultimastima1} yields
\begin{equation}\label{ultimastima1}
\| A_s \psi \|_B \leq \sum_{k=1}^N |c_k| \| A_s \gamma_{k,j} \|_B \leq \delta N \| \psi \|_B < \ep \| \psi \|_B.
\end{equation}
According to the following Lemma \ref{hilbertsubspacelemma1} we may choose $\psi \in V \cap (\Ker \ A_s)^\perp$ such that $\psi \neq 0$. But this means that \eqref{ultimastima1} contradicts \eqref{reductioadabsurdum1}.
Therefore $0 < \dim \Ker \ A_s < \infty$ and $\Ran \ A_s$ is closed cannot hold, which proves the implication \eqref{propstatement1}.
\end{proof}

\begin{lem}\label{hilbertsubspacelemma1}
Let $H$ be a complex Hilbert space and let $U \subseteq H$ and $V \subseteq H$ be two nonzero finite-dimensional subspaces.
Then
$$
\dim(V) > \dim(U) \quad \Longrightarrow \quad V \cap U^\perp \neq 0.
$$
\end{lem}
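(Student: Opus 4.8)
The plan is to deduce the statement from the rank--nullity theorem applied to a single linear map. Since $U$ is finite-dimensional it is a closed subspace of $H$, so the orthogonal projection $P\colon H \to H$ onto $U$ is well defined. I would then consider the restriction of $P$ to $V$, namely the linear map $P|_V\colon V \to U$ between the finite-dimensional spaces $V$ and $U$.

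The crucial observation is the kernel identity $\Ker(P|_V) = V \cap \Ker P = V \cap U^\perp$, which holds because the kernel of the orthogonal projection onto $U$ is precisely $U^\perp$. Applying the rank--nullity theorem to $P|_V$ then gives
\[
\dim\bigl(V \cap U^\perp\bigr) = \dim V - \dim \Ran(P|_V) \geq \dim V - \dim U > 0,
\]
where I have used $\Ran(P|_V) \subseteq U$ and the hypothesis $\dim(V) > \dim(U)$. Hence $V \cap U^\perp$ is a nonzero subspace, which is exactly the assertion.

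I do not expect any genuine obstacle here: the only points to verify are the well-definedness of the orthogonal projection (guaranteed by the finite-dimensionality, hence closedness, of $U$) and the elementary computation of $\Ker(P|_V)$. An alternative, equally short route avoids projections altogether: choosing an orthonormal basis $u_1,\dots,u_k$ of $U$, a vector $v \in V$ lies in $U^\perp$ precisely when it satisfies the $k$ homogeneous linear conditions $(v,u_i)_H = 0$ for $i=1,\dots,k$; since $k = \dim(U) < \dim(V)$, this system has a nontrivial solution inside $V$. Either way the proof is essentially a one-liner.
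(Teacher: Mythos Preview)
Your proof is correct and follows essentially the same idea as the paper's: both consider the orthogonal projection $P$ onto $U$ restricted to $V$ and exploit the identification $\Ker(P|_V)=V\cap U^\perp$. The paper argues the contrapositive by showing that $V\cap U^\perp=0$ forces $P|_V$ to be injective (hence $\dim V\le\dim U$), whereas you apply rank--nullity directly to $P|_V$; these are two phrasings of the same argument, yours being slightly more streamlined.
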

\begin{proof}
We prove the equivalent implication
\begin{equation}\label{lemmastatement1}
V \cap U^\perp = 0 \quad \Longrightarrow \quad \dim(V) \leq \dim(U).
\end{equation}
Let $n=\dim(V)$ and let $(f_i)_{i=1}^n \subseteq V$ be an ONB for $V$. Let $P$ denote the orthogonal projection $P:H \mapsto U$. Then $P f_i \in U \setminus 0$ for each $1 \leq i \leq n$, since $P f_i=0$ gives the contradiction $f_i = (I-P)f_i \in V \cap U^\perp = 0$.
Suppose that $\sum_{i=1}^n a_i P f_i = 0$ where $a_i \in \co$ for $1 \leq i \leq n$. Then we have
$$
P \left( \sum_{i=1}^n a_i f_i \right) = 0,
$$
which implies $\sum_{i=1}^n a_i f_i \in V \cap U^\perp=0$. Thus $a_i=0$ for $1 \leq i \leq n$ which shows that
$(Pf_i)_{i=1}^n \subseteq U$ is a linearly independent set. Hence $\dim(U) \geq n = \dim(V)$.
\end{proof}

Proposition \ref{besicovitchfredholm1} has the following consequences, the second of which is proved analogously to Proposition \ref{compactimplieszero1}.

\begin{cor}\label{besicovitchessential1}
Suppose that \eqref{rhodeltakrav1} holds and $a \in APS_{\rho,\delta}^0$,
or suppose that $0 \leq \delta < \rho \leq 1$ and $a \in APHS_{\rho,\delta}^{m,m_0}$ where $m \geq m_0>0$.
Then $\sigma_{B^2}(\overline{a(x,D)}) = \sigma_{B^2,\rm ess}(\overline{a(x,D)})$.
\end{cor}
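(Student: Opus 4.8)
The plan is to read this off directly from Proposition~\ref{besicovitchfredholm1}, in exact analogy with the way Corollary~\ref{shubinessential1} is obtained from Proposition~\ref{shubinfredholm1}. The inclusion $\sigma_{B^2,\rm ess}(\overline{a(x,D)}) \subseteq \sigma_{B^2}(\overline{a(x,D)})$ is immediate from the definitions, since the discrete spectrum $\sigma_{B^2,d}$ was defined as $\sigma_{B^2} \setminus \sigma_{B^2,\rm ess}$; so it suffices to prove the reverse inclusion, i.e.\ that $\sigma_{B^2,d}(\overline{a(x,D)}) = \emptyset$.

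To this end, let $s \in \sigma_{B^2}(\overline{a(x,D)})$ and suppose, aiming at a contradiction, that $s \notin \sigma_{B^2,\rm ess}(\overline{a(x,D)})$. By the definition of the essential spectrum this means precisely that $\overline{a(x,D)} - sI$ is a Fredholm operator on $B^2(\rr d)$. Proposition~\ref{besicovitchfredholm1} then gives that $\overline{a(x,D)} - sI$ has a bounded inverse, and in particular it is injective and surjective; thus $s$ lies in the resolvent set $\rho_{B^2}(\overline{a(x,D)})$, contradicting $s \in \sigma_{B^2}(\overline{a(x,D)})$. Hence $\sigma_{B^2}(\overline{a(x,D)}) \subseteq \sigma_{B^2,\rm ess}(\overline{a(x,D)})$, and the two sets coincide.

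There is no genuine obstacle in this deduction: the whole weight of the statement sits in Proposition~\ref{besicovitchfredholm1} (and behind it the identity $A_s^+ = A_s^*$ together with the $APS^{-\infty}$-parametrix used in the hypoelliptic case). The only point requiring a little care is the convention that ``$\overline{a(x,D)} - sI$ has a bounded inverse'' includes that the inverse is defined on all of $B^2(\rr d)$, so that surjectivity of $\overline{a(x,D)} - sI$ is part of the conclusion and $s$ genuinely falls into the resolvent set. Optionally one could also remark, as in the $L^2$ case treated in Proposition~\ref{compactimplieszero1}, that this forces a nonzero operator with symbol in $APS_{\rho,\delta}^0$ not to be compact on $B^2(\rr d)$, but that is a separate corollary.
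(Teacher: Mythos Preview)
Your proof is correct and follows exactly the approach the paper intends: the corollary is stated in the paper without an explicit proof, as an immediate consequence of Proposition~\ref{besicovitchfredholm1}, in direct analogy with how Corollary~\ref{shubinessential1} follows from Proposition~\ref{shubinfredholm1}. Your contradiction argument (if $s$ were in $\sigma_{B^2}\setminus\sigma_{B^2,\rm ess}$ then $\overline{a(x,D)}-sI$ would be Fredholm, hence invertible by Proposition~\ref{besicovitchfredholm1}, hence $s\in\rho_{B^2}$) is precisely the intended deduction.
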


\begin{cor}\label{compactimplieszero2}
If \eqref{rhodeltakrav1} holds and $a \in APS_{\rho,\delta}^0$ then $a(x,D)$, considered as an operator on $B^2(\rr d)$, is not compact unless it is zero.
\end{cor}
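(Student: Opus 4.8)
The plan is to mimic the proof of Proposition~\ref{compactimplieszero1}, replacing $L^2(\rr d)$ by $B^2(\rr d)$ throughout and invoking Corollary~\ref{besicovitchessential1} in place of Corollary~\ref{shubinessential1}. First I would note that since $a \in APS_{\rho,\delta}^0$ the operator $a(x,D)$ is already bounded and everywhere defined on $B^2(\rr d)$ by \eqref{normlikhet1}, so no closure is needed. Its Hilbert space adjoint on $B^2(\rr d)$ coincides with the formal adjoint $a^+(x,D)$ --- both are bounded operators on $B^2(\rr d)$ and they agree on the dense subspace $TP(\rr d)$ by the formal-adjoint identity --- so I would set $b(x,D) := a(x,D)^* a(x,D) = a^+(x,D) a(x,D)$, which has symbol $b = a^+ \wpr_0 a \in APS_{\rho,\delta}^0$ by \eqref{apsymbkont1} and is bounded, selfadjoint and nonnegative on $B^2(\rr d)$, with $(b(x,D) f, f)_B = \| a(x,D) f \|_B^2$.

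Next, assuming $a(x,D)$ compact on $B^2(\rr d)$, the operator $b(x,D)$ is compact and selfadjoint, so by the spectral theorem for compact selfadjoint operators (cf. \cite[Thms.~I.1.7 and I.1.9]{Edmunds1}) its $B^2(\rr d)$-spectrum consists of real eigenvalues whose only possible accumulation point is $0$, with $\dim \Ker(b(x,D) - \mu I) < \infty$ for each nonzero $\mu$ in the spectrum. By \eqref{discreteselfadjoint1} every such $\mu$ then lies in the discrete spectrum $\sigma_{B^2,d}(b(x,D))$. On the other hand $b \in APS_{\rho,\delta}^0$, so Corollary~\ref{besicovitchessential1} gives $\sigma_{B^2}(b(x,D)) = \sigma_{B^2,\rm ess}(b(x,D))$, i.e. $\sigma_{B^2,d}(b(x,D)) = \emptyset$. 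Hence $\sigma_{B^2}(b(x,D)) \subseteq \{0\}$, and since the spectrum of a bounded operator is nonempty, $\sigma_{B^2}(b(x,D)) = \{0\}$.

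Finally I would conclude that a bounded selfadjoint operator with spectrum $\{0\}$ vanishes, since $\| b(x,D) \|_{\Lop(B^2)} = \sup_{s \in \sigma_{B^2}(b(x,D))} |s| = 0$ by \cite[Thm.~VI.6]{Reed1}; then $\| a(x,D) f \|_B^2 = (b(x,D) f, f)_B = 0$ for all $f \in B^2(\rr d)$, so $a(x,D) = 0$. I do not expect a genuine obstacle here: the argument is essentially a transcription of the proof of Proposition~\ref{compactimplieszero1} into the Besicovitch setting. The only point deserving a word of care is the identification of the Hilbert space adjoint on $B^2(\rr d)$ with the formal adjoint $a^+(x,D)$ (and hence the selfadjointness of $b(x,D)$ on $B^2(\rr d)$); and the whole argument of course rests on Corollary~\ref{besicovitchessential1}, i.e. on Proposition~\ref{besicovitchfredholm1}, which is where the substantive work lies.
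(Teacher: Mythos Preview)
Your proposal is correct and is exactly the approach the paper intends: it states that the corollary ``is proved analogously to Proposition~\ref{compactimplieszero1}'', i.e.\ by replacing $L^2(\rr d)$ with $B^2(\rr d)$ and invoking Corollary~\ref{besicovitchessential1} in place of Corollary~\ref{shubinessential1}, which is precisely what you do.
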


Next we study the spectrum of operators whose symbols depend on only one variable, that is, the operator is either a Fourier multiplier or the operator of multiplication with a $C_{\rm ap}^\infty(\rr d)$ function.

\begin{prop}\label{singlevariabledependence1}
Let $m \in \ro$ and $a \in APS_{\rho,\delta}^m$.
\begin{enumerate}
\item[\rm(i)] If $a(x,\xi)=a(x)$ for all $x,\xi \in \rr d$ then
\begin{equation}\label{spectrumequality1}
\sigma_{B^2} ( a(x,D) ) = \sigma_{B^2,\rm ess} ( a(x,D) ) = \overline{\Ran (a)}.
\end{equation}
\item[\rm(ii)] If $a(x,\xi)=a(\xi) \in \ro$ for all $x,\xi \in \rr d$ then
\begin{align}
\sigma_{B^2} ( \overline{a(D)} ) & = \sigma_{B^2,\rm ess} ( \overline{a(D)} ) = \overline{ \Ran (a)}, \label{spectrumequality2a} \\
\sigma_{B^2,\rm p} ( \overline{a(D)} ) & = \Ran (a), \label{spectrumequality2b} \\
\sigma_{B^2,\rm cont} ( \overline{a(D)} ) & = \overline{ \Ran (a)} \setminus \Ran (a). \label{spectrumequality2c}
\end{align}
\end{enumerate}
\end{prop}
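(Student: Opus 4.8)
The plan is to treat (i) and (ii) separately, in each case exploiting that $a(x,D)$ is diagonalized by a natural orthonormal basis of $B^2(\rr d)$.

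For (i), observe first that since $a$ does not depend on $\xi$, evaluating \eqref{hormclass1} at $\xi=0$ shows $a\in C_{\rm ap}^\infty(\rr d)\subseteq APS_{\rho,\delta}^0$, so $a(x,D)$ is simply the bounded operator of multiplication by $a$ on $B^2(\rr d)$. Under the isometric isomorphism $B^2(\rr d)\simeq L^2(\rrb d)$ this becomes multiplication by the continuous extension $\widetilde a\in C(\rrb d)$ of $a$. I would then invoke the standard description of the spectrum of a multiplication operator on an $L^2$-space: $\sigma_{L^2(\rrb d)}$ of multiplication by $\widetilde a$ equals the essential range of $\widetilde a$ with respect to the Haar measure $\mu$. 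Since $\mu$ has full support and $\widetilde a$ is continuous, the essential range coincides with the (compact, hence closed) range $\widetilde a(\rrb d)$; and because $i_B(\rr d)$ is dense in $\rrb d$ with $\widetilde a\circ i_B=a$, we get $\widetilde a(\rrb d)=\overline{\Ran(a)}$. (The inclusion $\sigma_{B^2}(a(x,D))\subseteq\overline{\Ran(a)}$ can also be seen directly without leaving $B^2$: if $s\notin\overline{\Ran(a)}$ then $1/(a-s)\in C_{\rm ap}^\infty\subseteq APS_{\rho,\delta}^0$ and multiplication by it is a bounded inverse.) Finally, $a(x,D)$ being bounded equals its own closure, so Corollary \ref{besicovitchessential1} applies with $a\in APS_{\rho,\delta}^0$ and yields $\sigma_{B^2}(a(x,D))=\sigma_{B^2,\rm ess}(a(x,D))$, completing \eqref{spectrumequality1}; alternatively, non-atomicity of $\mu$ shows directly that the multiplication operator has empty discrete spectrum.

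For (ii), recall that $a(D)e_\la=a(\la)e_\la$ for every $\la\in\rr d$, and $\{e_\la\}_{\la\in\rr d}$ is an ONB of $B^2(\rr d)$. Since $a$ is real-valued, $a(D)$ is symmetric on $TP(\rr d)$, and its closure $\overline{a(D)}$ is the self-adjoint ``diagonal'' operator with $\Dom(\overline{a(D)})=\{f=\sum_\la c_\la e_\la\in B^2(\rr d):\ \sum_\la |a(\la)|^2|c_\la|^2<\infty\}$ and $\overline{a(D)}f=\sum_\la a(\la)c_\la e_\la$. From this description the spectral identities follow by elementary spectral theory of diagonal operators. First, each $a(\la_0)$ is an eigenvalue with eigenvector $e_{\la_0}$, and conversely an eigenvector $\sum_\la c_\la e_\la$ for eigenvalue $t$ forces $c_\la=0$ unless $a(\la)=t$, so that $t\in\Ran(a)$; hence $\sigma_{B^2,\rm p}(\overline{a(D)})=\Ran(a)$, which is \eqref{spectrumequality2b}. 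Next, if $s\notin\overline{\Ran(a)}$ then $\inf_\la|a(\la)-s|>0$ and $g\mapsto\sum_\la(a(\la)-s)^{-1}g_\la e_\la$ is a bounded inverse; while if $s\in\overline{\Ran(a)}\setminus\Ran(a)$ one picks $\la_n$ with $a(\la_n)\to s$ and pairwise distinct (possible since otherwise a constant subsequence would force $s\in\Ran(a)$), so $(e_{\la_n})$ is an orthonormal Weyl sequence with $\|(\overline{a(D)}-s)e_{\la_n}\|_B=|a(\la_n)-s|\to 0$ and $s\in\sigma_{B^2,\rm ess}(\overline{a(D)})$. Together with the point spectrum computation this gives $\sigma_{B^2}(\overline{a(D)})=\overline{\Ran(a)}$. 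Finally, a self-adjoint operator has empty residual spectrum, so $\sigma_{B^2,\rm cont}(\overline{a(D)})=\sigma_{B^2}(\overline{a(D)})\setminus\sigma_{B^2,\rm p}(\overline{a(D)})=\overline{\Ran(a)}\setminus\Ran(a)$, which is \eqref{spectrumequality2c}.

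It remains to upgrade the previous step to $\sigma_{B^2}(\overline{a(D)})=\sigma_{B^2,\rm ess}(\overline{a(D)})$ in \eqref{spectrumequality2a}, i.e. to show the discrete spectrum is empty. Here I would use that $\Ran(a)$ is connected ($a$ continuous, $\rr d$ connected), hence an interval, so $\overline{\Ran(a)}$ is either a single point or a non-degenerate closed interval. In the latter case $\sigma_{B^2}(\overline{a(D)})$ has no isolated points, so by \eqref{discreteselfadjoint1} (valid since $\overline{a(D)}$ is self-adjoint) $\sigma_{B^2,\rm d}(\overline{a(D)})=\emptyset$. In the former case $a\equiv c$ and $\overline{a(D)}=cI$ on the infinite-dimensional space $B^2(\rr d)$, so $cI-cI$ has infinite-dimensional kernel, is not Fredholm, and $\{c\}=\sigma_{B^2,\rm ess}(\overline{a(D)})$; all the claimed identities then hold trivially. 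I expect the only real subtlety — more bookkeeping than obstacle — to be keeping the three spectral parts in (ii) consistently assigned, in particular noting that in the non-constant case no point of $\Ran(a)$ is isolated in the spectrum, which is exactly what makes the discrete spectrum vanish; and, in (i), checking that the essential range of $\widetilde a$ genuinely collapses to $\overline{\Ran(a)}$ by density of $\rr d$ in $\rrb d$.
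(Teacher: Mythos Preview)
Your proof is correct and shares the same underlying strategy as the paper's---reduce to a multiplication/diagonal operator and read off the spectrum---but you make two tactical choices that differ from the paper. In (i), you invoke the general fact that the spectrum of multiplication by $\widetilde a\in C(\rrb d)$ on $L^2(\rrb d)$ is the essential range, then identify this with $\overline{\Ran(a)}$ via density of $i_B(\rr d)$; the paper instead argues both inclusions by hand, in particular showing $\overline{\Ran(a)}\subseteq\sigma_{B^2}$ by building, for each $s\in\overline{\Ran(a)}$ and $\ep>0$, an explicit test function $\chi_{U_\ep}\in L^2(\rrb d)$ supported where $|\widetilde a-s|<\ep$ (using that Haar measure charges nonempty open sets). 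Your route is quicker if one is willing to cite the multiplication-operator theorem; the paper's is self-contained. In (ii), for the equality $\sigma_{B^2}=\sigma_{B^2,\rm ess}$ you exploit that $\Ran(a)$ is connected (hence $\overline{\Ran(a)}$ is an interval with no isolated points, so \eqref{discreteselfadjoint1} forces $\sigma_{B^2,\rm d}=\emptyset$), whereas the paper argues by contradiction: if $M_a-sI$ were Fredholm then $a^{-1}(\{s\})$ would be finite and the closed-range inequality would give a uniform lower bound $|a(\xi)-s|\geq C$ off that finite set, contradicting continuity of $a$. Your connectedness argument is slicker and leans on the real-valuedness hypothesis (which you also use, legitimately, to get self-adjointness and hence empty residual spectrum); the paper's Fredholm argument does not need $a$ real for that particular step, though the hypothesis is present anyway.
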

\begin{proof}
(i) In this case $a \in APS_{1,0}^0$ and $a(x,D) \in \Lop(B^2)$. The first equality of \eqref{spectrumequality1} follows from Corollary \ref{besicovitchessential1}.
The second equality is proved by means of the equivalence: $s \notin \sigma_{B^2}(a(x,D))$ if and only if \eqref{notspectrum} holds true for some $C>0$, with $T=a(x,D)$ and $H = \Dom \ a(x,D) = \Dom \ a(x,D)^* = B^2(\rr d)$.

In fact, if $s \notin \overline{\Ran (a)}$ then $\ep := \inf_{x \in \rr d} |a(x)-s| > 0$.
Hence for $f \in B^2(\rr d)$ we have
\begin{equation}\nonumber
\begin{aligned}
\| (a(x,D) - s I) f \|_B^2 & = \mathscr M_x ( | (a(x)-s)f(x)|^2 ) \geq \ep^2 \| f \|_B^2, \\
\| (a(x,D) - s I)^* f \|_B^2 & = \mathscr M_x ( | (\overline{a(x)-s)}f(x)|^2 ) \geq \ep^2 \| f \|_B^2,
\end{aligned}
\end{equation}
so $s \notin \sigma_{B^2}(a(x,D))$ follows from \eqref{notspectrum}. Thus $\sigma_{B^2} ( a(x,D) ) \subseteq \overline{\Ran (a)}$.

Suppose on the other hand $s \in \overline{\Ran (a)}$.
Let $\ep>0$. There exists $x_\ep \in \rr d$ such that $|a(x_\ep)-s| < \ep$.
The function $\rr d \ni x \mapsto a(x)-s$ may be extended to a continuous function on $\rrb d$.
Therefore there exists an open neighborhood $U_\ep \subseteq \rrb d$ containing $x_\ep$ such that $|a(x)-s| < \ep$ for $x \in U_\ep$.
The Haar measure $\mu(U_\ep)$ of $U_\ep$ must be nonzero, since otherwise we could cover the compact group $\rrb d$ with a finite number of translations of $U_\ep$, implying the contradiction that the Haar measure of $\rrb d$ is zero. The indicator function $\chi_{U_\ep}$ has thus nonzero $L^2(\rrb d)$ norm, and it follows that $\| (a(x,D) - s I) \chi_{U_\ep} \|_{B} \leq \ep \| \chi_{U_\ep} \|_B$. Since $\ep>0$ is arbitrary it follows that \eqref{notspectrum} cannot hold. Hence $\overline{\Ran (a)} \subseteq \sigma_{B^2} ( a(x,D) )$ and the second equality of \eqref{spectrumequality1} is proved.

(ii) In this case $a(D)$ is a Fourier multiplier operator and acts on $TP(\rr d)$ as $a(D)= \mathscr F_B^{-1} M_a \mathscr F_B$ where $(M_a x)_\la = a(\la) \ x_\la$, $\la \in \rr d$, $x \in l^2(\rr d)$.
We take the closure of $a(D)$ as an unbounded operator on $B^2(\rr d)$.
By the proofs of \cite[Thms.~III.9.2 and IX.6.2]{Edmunds1},
the closure is
\begin{equation}\label{unitaryequiv2}
\overline{a(D)}= \mathscr F_B^{-1} M_a \mathscr F_B
\end{equation}
with $\Dom \ \overline{a(D)}  = \{ f \in B^2(\rr d): \ \sum_{\la \in \rr d} |a(\la) \wh f_\la|^2 < \infty \}$, and $M_a$ is a closed operator with domain $\Dom \ M_a = \{ x \in l^2(\rr d): \ \sum_{\la \in \rr d} |a(\la) x_\la|^2 < \infty \}$. Moreover $M_a^\ast = M_{\overline a}$.

If $s \notin \overline{\Ran(a)}$, then $\ep=\inf_{\xi \in \rr d} | a(\xi)-s| >0$ and
\begin{equation}\nonumber
\begin{aligned}
\| (M_a - sI) z \|_{l^2} & \geq \ep \| z \|_{l^2}, \quad z \in \Dom \ M_a, \\
\| (M_a - sI)^* z \|_{l^2} & \geq \ep \| z \|_{l^2}, \quad z \in \Dom \ M_a^*.
\end{aligned}
\end{equation}
Hence $s \notin \sigma_{l^2} ( M_a )$, and thus $\sigma_{B^2}(\overline{a(D)}) = \sigma_{l^2} ( M_a ) \subseteq \overline{\Ran(a)}$, the first equality being a consequence of \eqref{unitaryequiv2}.

Suppose $s \in \overline{\Ran(a)}$. Let $\ep>0$ and pick $\xi \in \rr d$ such that $| a(\xi)-s| < \ep$.
We have
\begin{equation}\nonumber
\| (M_a - sI) \delta_{(\xi)} \|_{l^2} \leq \ep \| \delta_{(\xi)} \|_{l^2}, \quad \| (M_a - sI)^* \delta_{(\xi)} \|_{l^2} \leq \ep \| \delta_{(\xi)} \|_{l^2},
\end{equation}
and since $\ep>0$ is arbitrary we may conclude that $s \in \sigma_{l^2} ( M_a )$. Therefore $\sigma_{B^2}(\overline{a(D)}) = \overline{\Ran(a)}$.

Next we prove $\sigma_{B^2}(\overline{a(D)}) = \sigma_{B^2,\rm ess}(\overline{a(D)})$.
Suppose $s \in \sigma_{B^2}(\overline{a(D)}) \setminus \sigma_{B^2,\rm ess}(\overline{a(D)})$, so that $s \in \overline{\Ran(a)}$ and $M_a - sI$ is Fredholm on $l^2(\rr d)$. Then either $\Ker \ (M_a - sI) = 0$ or $\Ker \ (M_a - sI) = \linspan( \delta_{(\xi_1)},\dots ,\delta_{(\xi_n)} )$ where $a(\xi)=s$ $\Leftrightarrow$ $\xi \in \{ \xi_j \}_{j=1}^n$. Moreover, we have
\begin{equation}\label{lowerboundassumption1}
\| (M_a - sI) \ z \|_{l^2}^2
\geq C^2 \inf_{x \in \Ker \ (M_a - sI)} \| z-x \|_{l^2}^2
= C^2 \sum_{\xi \in \rr d \setminus \{ \xi_j \}_{j=1}^n } | z_\xi|^2
\end{equation}
for some $C>0$, for all $z \in \Dom \ (M_a - sI)$, according to \cite[Thm.~I.3.4]{Edmunds1}.
Because $a$ is continuous it is possible to pick $\xi \in \rr d \setminus \{ \xi_j \}_{j=1}^n$ such that $|a(\xi)-s|<C$. Then $z=\delta_{(\xi)}$ gives a contradiction to \eqref{lowerboundassumption1}. It follows that $\sigma_{B^2}(\overline{a(D)}) = \sigma_{B^2, \rm ess}(\overline{a(D)})$. Thus \eqref{spectrumequality2a} is proved and it remains to show \eqref{spectrumequality2b} and \eqref{spectrumequality2c}.

If $s=a(\xi)$ then $z=\delta_{(\xi)}$ belongs to the kernel of $M_a - sI$ and hence $e_\xi$ belongs to the kernel of $\overline{a(D)}-sI$, which proves $\Ran (a) \subseteq \sigma_{B^2,\rm p} ( \overline{a(D)} )$.
If $z \in l^2(\rr d) \setminus 0$ belongs to the kernel of $M_a - sI$ then $(a(\xi)-s) \ z_\xi = 0$ for all $\xi \in \rr d$, and thus $a(\xi)=s$ must hold for some $\xi \in \rr d$. Hence $\sigma_{B^2,\rm p} ( \overline{a(D)} ) \subseteq \Ran (a)$
which confirms \eqref{spectrumequality2b}.

If $\overline{ \Ran (a)} \setminus \Ran (a) = \emptyset$ then $\sigma_{B^2,\rm cont} = \emptyset$ also.
Suppose $\overline{ \Ran (a)} \setminus \Ran (a) \neq \emptyset$ and let $s \in \overline{ \Ran (a)} \setminus \Ran (a)$. Then $a(\xi) \neq s$ for all $\xi \in \rr d$ and thus $0=\Ker \ (M_a - sI) = ( \Ran (M_a - sI)^* )^\perp$.
Hence $\Ran (M_a - sI)^* = \Ran (M_{\overline a} - \overline s I)$ is dense in $l^2(\rr d)$, which is equivalent to
$\Ran (M_a - sI)$ being dense in $l^2(\rr d)$.
Thus $\overline{ \Ran (a)} \setminus \Ran (a) \subseteq \sigma_{B^2,\rm cont} ( \overline{a(D)} )$.
Finally $\sigma_{B^2,\rm cont} ( \overline{a(D)} ) \subseteq \sigma_{B^2} ( \overline{a(D)} ) \setminus \sigma_{B^2,\rm p} ( \overline{a(D)} ) = \overline{ \Ran (a)} \setminus \Ran (a)$ which proves \eqref{spectrumequality2c}.
\end{proof}

\begin{rem}
It is interesting to compare Proposition \ref{singlevariabledependence1} (ii) with \cite[Thm.~IX.6.2]{Edmunds1}. The latter result says that if $a(D)$ is a partial differential operator with constant coefficients
then the spectrum of $\overline{a(D)}$ acting on $L^2(\rr d)$ is
$\sigma_{L^2} ( \overline{a(D)} ) = \sigma_{L^2,\rm ess} ( \overline{a(D)} ) = \sigma_{L^2,\rm cont} ( \overline{a(D)} = \overline{ \Ran (a)}$, and thus
$\sigma_{L^2,\rm p} ( \overline{a(D)} ) = \sigma_{L^2,\rm res} ( \overline{a(D)} ) = \emptyset$.
\end{rem}

In the next theorem we use the unitary equivalence \eqref{unitaryequiv1} between $a(x,D)$ acting on $B^2(\rr d)$ and $U(a)(0)$ acting on $l^2(\rr d)$.

\begin{prop}\label{essentialspectrum1}
Suppose \eqref{rhodeltakrav1} holds, $m \in \ro$ and $a \in APS_{\rho,\delta}^m$.
If there exists $\xi_0 \in \rr d$ such that $a(x,\xi_0) = \mathscr M (a(\cdot,\xi_0))$ for all $x \in \rr d$,
then $\mathscr M (a(\cdot,\xi_0)) \in \sigma_{B^2,\rm ess} ( \overline{a(x,D)} )$.
\end{prop}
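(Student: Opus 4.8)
The plan is to show that $T := \overline{a(x,D)} - cI$, with $c := \mathscr M(a(\cdot,\xi_0))$, is not a Fredholm operator on $B^2(\rr d)$; by the definition of the essential spectrum used in this paper that is exactly the claim $c \in \sigma_{B^2,\rm ess}(\overline{a(x,D)})$, and via the unitary equivalence \eqref{unitaryequiv1} it is equivalent to $\overline{U(a)(0)} - cI$ failing to be Fredholm on $l^2(\rr d)$. The hypothesis says precisely that the symbol slice $a(\cdot,\xi_0)$ equals the constant $c$, so that $a(x,D) e_{\xi_0} = a(x,\xi_0) e_{\xi_0} = c\, e_{\xi_0}$; thus $e_{\xi_0} \in TP(\rr d)$ is a genuine eigenfunction and $c$ already lies in the point spectrum. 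To upgrade this to the essential spectrum I would produce a singular (Weyl) sequence at $c$.

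Concretely, I would fix a nonzero $v \in \rr d$, set $\xi_n = \xi_0 + v/n$ (pairwise distinct, converging to $\xi_0$), and take $u_n = e_{\xi_n} \in TP(\rr d) \subseteq \Dom \ T$. Since $\{ e_\la \}_{\la \in \rr d}$ is an orthonormal basis of $B^2(\rr d)$, the $u_n$ form an orthonormal sequence and hence converge weakly to $0$. Using $a(x,D) e_{\xi_n} = a(\cdot,\xi_n) e_{\xi_n}$ and $|e_{\xi_n}| \equiv 1$ I would then estimate $\| T u_n \|_B = \| a(\cdot,\xi_n) - c \|_B \le \| a(\cdot,\xi_n) - a(\cdot,\xi_0) \|_{L^\infty}$, which tends to $0$ because $\xi \mapsto a(\cdot,\xi) \in C_{\rm ap}(\rr d)$ is continuous in the supremum norm (as recalled from \cite{Wahlberg1} in the proof of Proposition~\ref{Acont1}) and $a(\cdot,\xi_0) \equiv c$ by hypothesis. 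So $(u_n)$ is an orthonormal sequence in $\Dom \ T$ with $\| T u_n \|_B \to 0$.

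Finally I would invoke the standard fact that such a sequence rules out Fredholmness. If $T$ were Fredholm, then $\Ker \ T$ would be finite-dimensional and $\Ran \ T$ closed, so by \cite[Thm.~I.3.4]{Edmunds1} there would be $\gamma > 0$ with $\| T \psi \|_B \ge \gamma \| \psi \|_B$ for $\psi \in \Dom \ T \cap (\Ker \ T)^\perp$; writing $u_n = P u_n + (I-P) u_n$ with $P$ the finite-rank orthogonal projection onto $\Ker \ T$, weak convergence gives $\| P u_n \|_B \to 0$, whence $\| T u_n \|_B \ge \gamma \| (I-P) u_n \|_B \to \gamma > 0$, contradicting $\| T u_n \|_B \to 0$. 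I do not anticipate a serious obstacle: the content is simply that constancy of $a(\cdot,\xi_0)$ turns $e_{\xi_0}$ into an exact eigenfunction and continuity in $\xi$ turns the neighbouring characters $e_{\xi_n}$ into approximate eigenfunctions. The only points needing care are verifying that the $e_{\xi_n}$ are genuinely orthonormal in $B^2(\rr d)$ (so that the weak limit is $0$ for free) and applying the Weyl-type criterion in its correct non-selfadjoint form, i.e.\ using only that closed range forces a lower bound on the orthocomplement of a finite-dimensional kernel.
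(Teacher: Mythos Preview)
Your proposal is correct and follows essentially the same approach as the paper: both construct a Weyl (singular) sequence from the characters $e_{\xi_n}$ with $\xi_n \to \xi_0$ and then invoke the Weyl-type criterion for the essential spectrum. Your version is in fact slightly more direct, since you compute $\| (a(x,D)-cI) e_{\xi_n} \|_B = \| a(\cdot,\xi_n)-c \|_B$ straight in $B^2$, whereas the paper takes a small detour through the unitary equivalence \eqref{unitaryequiv1} and Plancherel in $l^2$ before translating back; the paper then cites \cite[Thm.~IX.1.3]{Edmunds1} for the Weyl criterion while you prove it by hand from \cite[Thm.~I.3.4]{Edmunds1}, which is a harmless variation.
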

\begin{proof}
Note that $a_0(\xi_0) = \mathscr M (a(\cdot,\xi_0))$ by \eqref{bohrfourier1}.
Plancherel's formula \eqref{plancherel1} yields for any $\xi \in \rr d$
\begin{equation}\label{limitapproachingzero1}
\begin{aligned}
\| ( U(a)(0) - sI) \ \delta_{(-\xi)} \|_{l^2}^2 & = \sum_{\la \in \rr d} \left|  a_{-\xi-\la} (\xi) - s \ \left( \delta_{(-\xi)} \right)_\la  \right|^2 \\
& = |a_0(\xi)-s|^2 + \sum_{\la \neq 0} | a_\la (\xi) |^2 \\
& = |a_0(\xi)-s|^2 + \| a(\cdot,\xi) \|_B^2 - |a_0(\xi)|^2.
\end{aligned}
\end{equation}
By the mean value theorem we have for $\xi,\eta \in \rr d$
\begin{equation}\nonumber
a(x,\xi+\eta)-a(x,\xi) = \left( \nabla_2 \re \ a(x,\xi+\theta_1 \eta) + i \nabla_2 \im \ a(x,\xi+\theta_2 \eta)\right) \cdot \eta
\end{equation}
where $\nabla_2$ denotes the gradient in the second $\rr d$ variable
and $0 \leq \theta_1,\theta_2 \leq 1$. It follows from this and \eqref{hormclass1} that for any $\xi \in \rr d$ we have
\begin{equation}\nonumber
\| a(\cdot,\xi+\eta)-a(\cdot,\xi) \|_{L^\infty} \rightarrow 0, \quad \eta \rightarrow 0,
\end{equation}
which implies
\begin{equation}\nonumber
\left| \ \| a(\cdot,\xi+\eta) \|_B - \| a(\cdot,\xi) \|_{B} \right| \rightarrow 0, \quad \eta \rightarrow 0.
\end{equation}
In particular $ \| a(\cdot,\xi_0+\eta) \|_B \rightarrow \| a(\cdot,\xi_0) \|_B = |a_0(\xi_0)|$ as $\eta \rightarrow 0$. If we pick a sequence $(\xi_j)_{j=1}^\infty$ of distinct vectors in $\rr d$ such that $\xi_j \rightarrow \xi_0$, we thus obtain from \eqref{limitapproachingzero1} with $s=a_0(\xi_0)$ and the continuity of $\xi \mapsto a_0(\xi)$ (cf. \cite{Wahlberg1})
\begin{equation}\nonumber
\begin{aligned}
\| ( U(a)(0) - a_0(\xi_0) ) I) \ \delta_{(-\xi_j)} \|_{l^2}^2 = & \ |a_0(\xi_j)-a_0(\xi_0)|^2 + \| a(\cdot,\xi_j) \|_B^2 - |a_0(\xi_j)|^2 \\
& \longrightarrow 0, \quad j \rightarrow \infty.
\end{aligned}
\end{equation}
Finally \eqref{unitaryequiv1} gives
\begin{equation}\label{singularsequence1}
\begin{aligned}
\| (a(x,D) - a_0(\xi_0) I) \ e_{\xi_j} \|_B
= & \ \| (\mathscr F_B R )^*\ (U(a)(0) - a_0(\xi_0) I) \ \mathscr F_B R \ e_{\xi_j} \|_B \\
= & \ \| (U(a)(0) - a_0(\xi_0) I) \ \delta_{(-\xi_j)} \|_{l^2} \\
& \longrightarrow 0, \quad j \rightarrow \infty.
\end{aligned}
\end{equation}
This means that the first inequality of \eqref{notspectrum} does not hold for any $C>0$.
Therefore $a_0(\xi_0) \in \sigma_{B^2}(\overline{a(x,D)})$.

Furthermore, we may use Weyl's criterion (see \cite[Thm.~IX.1.3]{Edmunds1} and \cite{Reed1} for the case of bounded operators).
In fact, the sequence $(e_{\xi_j})_{j=1}^\infty \subseteq B^2(\rr d)$ is orthonormal and therefore converges weakly to zero. Thus \eqref{singularsequence1} combined with \cite[Thm.~IX.1.3]{Edmunds1}  imply that $a_0(\xi_0) \in \sigma_{B^2,\rm ess} ( \overline{a(x,D)} )$.
\end{proof}

Our final result concerns the invariance of the spectra of an operator and its representations.

\begin{prop}\label{spectruminvariant1}
The spectrum is invariant as follows.

\begin{enumerate}
\item[\rm(i)] If \eqref{rhodeltakrav1} holds and $a \in APS_{\rho,\delta}^0$ then
\begin{equation}\nonumber
\begin{aligned}
\sigma_{L^2}(a(y,D)) & = \sigma_{B^2}(a(y,D)) = \sigma_{l^2}(U(a)(0)) \\
& = \sigma_{B^2 \otimes L^2}(A(a(y,D))) = \sigma_{L^2(\rr d, l^2)}(U(a)(D)).
\end{aligned}
\end{equation}

\item[\rm(ii)] If $0 \leq \delta < \rho \leq 1$ and $a \in APHS_{\rho,\delta}^{m,m_0}$ where $m \geq m_0>0$ then
\begin{equation}\nonumber
\begin{aligned}
\sigma_{L^2}(\overline{a(x,D)}) & = \sigma_{B^2}(\overline{a(x,D)}) = \sigma_{l^2}(\overline{U(a)(0)}) \\
& = \sigma_{B^2 \otimes L^2}(\overline{A(a(y,D))}).
\end{aligned}
\end{equation}
\end{enumerate}
\end{prop}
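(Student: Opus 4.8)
The idea is to prove every identity by transporting all operators that occur to the single operator $a(x,D)$ on $B^2(\rr d)$, using the unitary equivalences and representation properties from Sections~2--4 together with the comparison between the $L^2$- and $B^2$-realisations. I would rely on three building blocks. (a) The equality $\sigma_{L^2}(\overline{a(x,D)})=\sigma_{B^2}(\overline{a(x,D)})$: in case (ii) this is Shubin's theorem \cite{Shubin3}; in case (i), where $a\in APS_{\rho,\delta}^0$ and $a(x,D)$ is bounded, it holds under the full range \eqref{rhodeltakrav1} by \eqref{normlikhet1}, because applying the norm identity both to the positive operators $T_1=(a-s)^+(x,D)\,(a-s)(x,D)$ and $T_2=(a-s)(x,D)\,(a-s)^+(x,D)$ and to the shifts $\|T_j\|_{\Lop(L^2)}\,I-T_j$ (all of order $0$) forces the top and bottom of the spectrum of $T_j$ to agree over $L^2(\rr d)$ and $B^2(\rr d)$, whence $s\in\sigma_{B^2}(a(x,D))$ iff $\inf\sigma_{B^2}(T_1)=0$ or $\inf\sigma_{B^2}(T_2)=0$, iff the same holds for $L^2$, iff $s\in\sigma_{L^2}(a(x,D))$. (b) By \eqref{unitaryequiv1} the unitary $\mathscr F_B R:B^2(\rr d)\to l^2(\rr d)$ conjugates $a(x,D)$ into $U(a)(0)$ and maps $TP(\rr d)$ onto $l_f^2$, so by Corollary~\ref{unitaryequivalent1} it intertwines the two operators and, in case (ii), their closures, giving $\sigma_{B^2}(a(y,D))=\sigma_{l^2}(U(a)(0))$, respectively $\sigma_{B^2}(\overline{a(x,D)})=\sigma_{l^2}(\overline{U(a)(0)})$. (c) Proposition~\ref{equivalent1} gives, for $a\in APS_{\rho,\delta}^0$, that $\sigma_{B^2\otimes L^2}(A(a(y,D)))=\sigma_{L^2(\rr d,l^2)}(U(a)(D))$.

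After (a)--(c), case (i) reduces to one remaining link between the clusters $\{\sigma_{L^2}(a(y,D)),\ \sigma_{B^2}(a(y,D)),\ \sigma_{l^2}(U(a)(0))\}$ and $\{\sigma_{B^2\otimes L^2}(A(a(y,D))),\ \sigma_{L^2(\rr d,l^2)}(U(a)(D))\}$, which I would supply by proving $\sigma_{L^2(\rr d,l^2)}(U(a)(D))=\sigma_{B^2}(a(x,D))$ by the standard Fourier-multiplier argument. By Corollary~\ref{sobolevcorollary1}, $U(a)(D)$ is the Fourier multiplier on $L^2(\rr d,l^2)$ with operator-valued symbol $\xi\mapsto U(a)(\xi)$, which by Proposition~\ref{operatorkont1} is norm continuous $\rr d\to\Lop(l^2)$ and of constant norm \eqref{operatorisom1}; by Lemma~\ref{simplification1} each fibre satisfies $U(a)(\xi)=(\mathscr F_B R M_{-\xi})\,a(x,D)\,(\mathscr F_B R M_{-\xi})^*$ with $\mathscr F_B R M_{-\xi}:B^2(\rr d)\to l^2(\rr d)$ unitary, so $\sigma_{l^2}(U(a)(\xi))=\sigma_{B^2}(a(x,D))$ for every $\xi$ and $\|(U(a)(\xi)-sI)^{-1}\|_{\Lop(l^2)}$ is independent of $\xi$ whenever $s\notin\sigma_{B^2}(a(x,D))$. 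For such $s$ the field $\xi\mapsto(U(a)(\xi)-sI)^{-1}$ is then norm continuous and bounded and defines $(U(a)(D)-sI)^{-1}$; for $s\in\sigma_{B^2}(a(x,D))=\sigma_{l^2}(U(a)(\xi_0))$ one builds a Weyl singular sequence for $U(a)(D)-sI$---or, if $U(a)(\xi_0)-sI$ is bounded below, for its adjoint $U(a)(D)^*=U(a^+)(D)$, using Corollary~\ref{representation1} and $U(a)(\xi)^+=U(a^+)(\xi)$---by tensoring a singular sequence for $U(a)(\xi_0)-sI$ in $l^2(\rr d)$ with scalar functions whose Fourier transforms concentrate at $\xi_0$, the norm continuity of $U(a)(\cdot)$ forcing $\|(U(a)(D)-sI)F_k\|\to0$ with $\|F_k\|=1$. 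This yields the missing equality, so all five sets in (i) coincide. (Alternatively one may at this step invoke $\sigma_{L^2}(a(y,D))=\sigma_{B^2\otimes L^2}(A(a(y,D)))$ from \cite{Shubin4b}.)

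In case (ii) the operators are unbounded, Proposition~\ref{equivalent1} is unavailable, and accordingly $\sigma_{L^2(\rr d,l^2)}(\overline{U(a)(D)})$ does not appear in the statement; so after (a) and (b) the only point left is $\sigma_{L^2}(\overline{a(x,D)})=\sigma_{B^2\otimes L^2}(\overline{A(a(y,D))})$. This is exactly the Coburn--Moyer--Singer and Shubin identity \cite{Coburn1,Shubin2,Shubin4b}, and I would reproduce its proof, checking that it goes through under the present hypotheses $0\le\delta<\rho\le1$, $m\ge m_0>0$: for $s\in\co$ one has $\overline{A(a(y,D))}-sI=\overline{A((a-s)(y,D))}$ with $a-s\in APHS_{\rho,\delta}^{m,m_0}$, one applies the representation $A$ to the parametrix identities \eqref{regularizer1} for $a-s$, and one uses that $A$ carries $APS^{-\infty}$ operators into the ideal of $\tau$-compact operators of the type~II$_\infty$ von Neumann algebra $\mathscr A_B$ to which $A$ is adjoined, obtaining that $\overline{A(a(y,D))}-sI$ is invertible iff $\overline{a(x,D)}-sI$ is. The main obstacle is precisely this last identity: there is no unitary-equivalence shortcut when $m>0$, and the relative (Breuer--)Fredholm theory in $\mathscr A_B$ cannot be replaced by ordinary Fredholm theory, since by Corollary~\ref{compactimplieszero2} a nonzero operator with symbol in $APS^{-\infty}\subseteq APS_{\rho,\delta}^0$ is never compact on $B^2(\rr d)$. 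In case (i) the only delicate point is the Fourier-multiplier identification, where constructing approximate eigenvectors localised simultaneously in the $l^2$-variable and in frequency takes some care; the uniform resolvent bound needed for the easy inclusion is, conveniently, automatic here because all fibres $U(a)(\xi)$ are unitarily equivalent to the single operator $a(x,D)$.
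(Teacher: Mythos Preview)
Your proposal is correct, but the route differs from the paper's in an instructive way.

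The paper does not use the unitary equivalences \eqref{unitaryequiv1} and Proposition~\ref{equivalent1} to transport spectra, nor does it argue fibrewise with the Fourier multiplier $U(a)(D)$. Instead it reduces everything in (i) to a single device: $s\notin\sigma_H(T)$ is rewritten as the pair of positivity conditions
\[
(a(x,D)-sI)^+(a(x,D)-sI)-C^2\ge 0,\qquad (a(x,D)-sI)(a(x,D)-sI)^+-C^2\ge 0
\]
on $TP(\rr d)$, and then the positivity-preservation statements of Corollary~\ref{unitaryequivalent1}, Corollary~\ref{representation1} and Proposition~\ref{representation2} carry these inequalities verbatim to $l_f^2$, $\mathscr S(\rr d,l_f^2)$ and $TP(\rr d,\mathscr S)$ respectively. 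All four representation spectra thus coincide with $\sigma_{B^2}(a(x,D))$ in one stroke; the equality with $\sigma_{L^2}$ is again Shubin. Your approach instead chains three separate mechanisms (unitary conjugation for $U(a)(0)$, Proposition~\ref{equivalent1} for $A\leftrightarrow U(a)(D)$, and the fibre/Weyl-sequence argument for $U(a)(D)$); it works, and the fibre argument is a nice illustration of why \eqref{operatorisom1} and the $\xi$-independence of $\sigma_{l^2}(U(a)(\xi))$ matter, but the paper's positivity transport is shorter and treats all representations symmetrically.

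For (ii) the contrast is sharper. You defer the identity $\sigma_{L^2}(\overline{a(x,D)})=\sigma_{B^2\otimes L^2}(\overline{A})$ to the Breuer--Fredholm machinery in $\mathscr A_B$ from \cite{Shubin2,Shubin4b}. The paper avoids the von Neumann algebra entirely: it keeps the positivity argument from (i), and the only additional work is to justify that the formal adjoint of $A$ closes to the true adjoint, $\overline{A^+}=(\overline{A})^*$. This is done by showing $A^+A$ is essentially selfadjoint on $TP(\rr d,\mathscr S)$: if $((A^+A)^*\pm iI)f=0$ one applies $A(b(y,D))$, where $b$ is a parametrix for $a^+(y,D)a(y,D)\pm iI$, and Proposition~\ref{Acont1} forces $f\in\bigcap_s B^2\otimes H^s$, whence $f=0$ by symmetry. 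So the parametrix enters, as in your sketch, but it is used for a regularity bootstrap rather than for $\tau$-compactness, and no relative index theory is needed. Your route is legitimate but imports more external structure; the paper's is self-contained within the representation framework it has built.
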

\begin{proof}
\rm(i) We use: $s \notin \sigma_{B^2}(a(x,D))$
if and only if there is a $C>0$ such that
\eqref{notspectrum} holds with $T=a(x,D)$, $H=B^2(\rr d)$ and $\Dom \ a(x,D) = \Dom \ a(x,D)^* = TP(\rr d)$.
Note that, for bounded operators, the adjoint equals the closure of the formal adjoint.
Thus \eqref{notspectrum} may be formulated equivalently as
\begin{equation}\label{notspectrum1}
\begin{aligned}
(a(x,D) - sI)^+ (a(x,D) - sI) - C^2 \geq 0, \\
(a(x,D) - sI) (a(x,D) - sI)^+ - C^2 \geq 0
\end{aligned}
\end{equation}
on $TP(\rr d)$.
By \eqref{symbolproduct1}, \eqref{matrixpositivity1}, \eqref{onequant1} and Corollary \ref{unitaryequivalent1}
this is equivalent to the positivity on $l_f^2$
\begin{equation}\nonumber
\begin{aligned}
(U(a)(0) - sI)^+ (U(a)(0) - sI) - C^2 & \geq 0, \\
(U(a)(0) - sI) (U(a)(0) - sI)^+ - C^2 & \geq 0.
\end{aligned}
\end{equation}
Moreover, \eqref{notspectrum1} is by Corollary \ref{representation1} equivalent to the positivity on $\mathscr S(\rr d,l_f^2)$
\begin{equation}\nonumber
\begin{aligned}
(U(a)(D) - sI)^+ (U(a)(D) - sI) - C^2 & \geq 0 \\
(U(a)(D) - sI) (U(a)(D) - sI)^+ - C^2 & \geq 0.
\end{aligned}
\end{equation}
Finally, \eqref{notspectrum1} is equivalent to the positivity on $TP(\rr d,\mathscr S(\rr d))$
\begin{equation}\nonumber
\begin{aligned}
( A(a(y,D)) - sI)^+ (A(a(y,D)) - sI) - C^2 & \geq 0 \\
(A(a(y,D)) - sI) (A(a(y,D)) - sI)^+ - C^2 & \geq 0
\end{aligned}
\end{equation}
according to Proposition \ref{representation2}.
The equivalences
\begin{equation}\nonumber
\begin{aligned}
s \in \sigma_{B^2}(a(x,D)) & \ \Leftrightarrow  \ s \in \sigma_{l^2}(U(a)(0)) \ \Leftrightarrow \ s \in \sigma_{L^2(\rr d, l^2)}(U(a)(D)) \\
& \ \Leftrightarrow  \ s \in \sigma_{B^2 \otimes L^2}( A(a(y,D)))
\end{aligned}
\end{equation}
follow.
Finally, if $0 \leq \delta < \rho \leq 1$ then $\sigma_{L^2}(a(x,D)) = \sigma_{B^2}(a(x,D))$ is \cite[Thm.~5.1]{Shubin3}, and the proof extends to the assumption \eqref{rhodeltakrav1}.
For an alternative proof of $\sigma_{L^2}(a(x,D)) = \sigma_{B^2 \otimes L^2}(A(a(y,D)))$ we refer to \cite{Shubin4b}.

\rm(ii) For unbounded operators the closure of the formal adjoint $\overline{a(x,D)^+}$ differs in general from the adjoint $a(x,D)^*$.
However, here the assumptions imply $\overline{a(x,D)^+} = (\overline{a(x,D)})^*$ (cf. the proof of Proposition \ref{besicovitchfredholm1} and \cite{Shubin3}),
that is, the closure of the formal adjoint equals the adjoint of the closure.
(We note that generally $(\overline{T})^*=T^*$ for a closable operator $T$ \cite[Thm.~VIII.1]{Reed1}.)
Thus \eqref{notspectrum1} is equivalent to $s \notin \sigma_{B^2}(\overline{a(x,D)})$.
By \eqref{unitaryequiv1} and \cite[Thm.~13.2]{Rudin1} we have
\begin{equation}\nonumber
\begin{aligned}
\overline{U(a)(0)^+} & = \mathscr F_B R \ \overline{a(x,D)^+} \ (\mathscr F_B R)^* \\
& = \mathscr F_B R \ (\overline{a(x,D)})^* \ (\mathscr F_B R)^* \\
& = (\overline{U(a)(0)})^*.
\end{aligned}
\end{equation}
The arguments used in the proof of \rm(i) now proves that $\sigma_{B^2}(\overline{a(y,D)}) = \sigma_{l^2}(\overline{U(a)(0)})$. The identity $\sigma_{L^2}(\overline{a(y,D)}) = \sigma_{B^2}(\overline{a(y,D)})$ is \cite[Thm.~5.2]{Shubin3}.

Finally we prove $\sigma_{B^2}(\overline{a(x,D)}) = \sigma_{B^2 \otimes L^2}(\overline{A})$ where $A=A(a(y,D))$.
The arguments in the proof of (i) are valid for this purpose, provided that we can show that $\overline{(A^+)} = (\overline{A})^*$. According to \cite[Thm.~4.1]{Shubin3} it suffices to prove that $A^+ A$ is essentially selfadjoint, which means that its closure is selfadjoint.
By \cite[Thm.~VIII.3]{Reed1} this is equivalent to
\begin{equation}\label{injective1}
\Ker ( (A^+ A)^* \pm i I) = \{ 0 \}.
\end{equation}
To prove the theorem it thus suffices to establish \eqref{injective1}.
We know that $a(y,D)^+ \in APHL_{\rho,\delta}^{m,m_0}$, $a(y,D)^+ a(y,D) \in APHL_{\rho,\delta}^{2m,2m_0}$ and
\begin{equation}\nonumber
a(y,D)^+ a(y,D) +iI \in APHL_{\rho,\delta}^{2m,2m_0}
\end{equation}
(cf. \cite[Props.~I.5.2, I.5.3 and Lemma I.5.3]{Shubin5}).
According to \eqref{regularizer1} there exists a symbol $b \in APHS_{\rho,\delta}^{-2m_0,-2m}$ such that
\begin{equation}\label{parametrix1}
b(y,D) \left( a(y,D)^+ a(y,D) +iI \right) = I - T \quad \mbox{where $T \in APL^{-\infty}$}.
\end{equation}
The operator $A^+A$ has domain $\Dom (A^+A) = TP(\rr d, \mathscr S(\rr d))$ and is symmetric.
We may extend the domain and define the operator $B=A^+A$ with domain $\Dom (B) = \bigcap_{s \in \ro} B^2 \otimes H^s$. Then $A^+A \subseteq B$ and $B$ is symmetric.

Suppose that $f \in \Dom (A^+ A)^*  \subseteq B^2 \otimes L^2$ and
\begin{equation}\label{kernelelement1}
((A^+ A)^* + i I)f = 0.
\end{equation}
This is equivalent to
\begin{equation}\nonumber
0 = ( ( (A^+ A)^* + i I)f,g )_{B^2 \otimes L^2} = (f, ( A^+ A -iI ) g)_{B^2 \otimes L^2}
\end{equation}
for all $g \in TP(\rr d, \mathscr S)$.
Let $(f_n)_{n=1}^\infty \subseteq TP(\rr d, \mathscr S)$ be a sequence such that $f_n \rightarrow f$ in $B^2 \otimes L^2$.
By Proposition \ref{Acont1}, $( A^+ A +iI ) f_n \rightarrow ( A^+ A +iI ) f$ in $B^2 \otimes H^{-2m}$.
For $F \in B^2 \otimes H^{-2m}$ and $G \in B^2 \otimes H^{2m}$
we have the inequality
$$
\left| \left( F,G \right)_{B^2 \otimes L^2} \right| \leq \| F \|_{B^2 \otimes H^{-2m}} \| G \|_{B^2 \otimes H^{2m}}.
$$
Thus the symmetry of $A^+A$ gives for any $g \in TP(\rr d, \mathscr S)$
\begin{equation}\nonumber
\begin{aligned}
(f, ( A^+ A -iI ) g)_{B^2 \otimes L^2}
& = \lim_{n \rightarrow \infty} (f_n, ( A^+ A -iI ) g)_{B^2 \otimes L^2} \\
& = \lim_{n \rightarrow \infty} ( ( A^+ A +iI ) f_n, g)_{B^2 \otimes L^2} \\
& = ( ( A^+ A +iI ) f,g)_{B^2 \otimes L^2}.
\end{aligned}
\end{equation}
Hence $(A^+ A + i I)f = 0$ in $B^2 \otimes L^2$. If we now appeal to Proposition \ref{representation2} and use \eqref{parametrix1} we get
\begin{equation}\nonumber
\begin{aligned}
0 & = A(b(y,D)) (A^+ A + i I)f \\
& = A(b(y,D)) A( a(y,D)^+ a(y,D) +iI ) f \\
& = A \left( b(y,D) (a(y,D)^+ a(y,D) +iI) \right) f \\
& = A( I - T ) f \\
& = f - A(T) f.
\end{aligned}
\end{equation}
Since $A(T): B^2 \otimes L^2 \mapsto \bigcap_{s \in \ro} B^2 \otimes H^s$ by Proposition \ref{Acont1}, and we know a priori that $f \in B^2 \otimes L^2$, we may conclude that actually $f \in \bigcap_{s \in \ro} B^2 \otimes H^s$. This means that $f \in \Dom ( B + i I )$, and $(B + i I)f = 0$ has only the trivial solution $f=0$ since $B$ is symmetric \cite[Thm.~13.16]{Rudin1}. By \eqref{kernelelement1} we have now proved the ``$+$'' case of \eqref{injective1}, and the ``$-$'' case follows similarly.
\end{proof}

\begin{rem}
Comparing case (i) and (ii) of Proposition \ref{spectruminvariant1}, the spectrum $\sigma_{L^2(\rr d, l^2)}(\overline{U(a)(D)})$ is conspicuously missing in case (ii). However,
for $0 \leq \delta < \rho \leq 1$, $m \geq m_0>0$ and $a \in APHS_{\rho,\delta}^{m,m_0}$ it seems difficult to prove that $\sigma_{B^2}(\overline{a(x,D)}) = \sigma_{L^2(\rr d, l^2)}(\overline{U(a)(D)})$. The reason is that Corollary \ref{sobolevcorollary1} gives, for negative $m$, only $U(a)(D): L^2(\rr d, l^2) \mapsto H^{m}(\rr d, l_{-m}^2)$, whereas we need $H^{-m}(\rr d, l_{-m}^2)$, that is a gain of regularity, in the right hand side to prove equality of the spectra, using techniques similar to the last part of the proof of Proposition \ref{spectruminvariant1} (cf. \cite{Shubin2,Shubin3}).

The fact that $\sigma_{B^2}(\overline{a(x,D)}) = \sigma_{B^2 \otimes L^2}(\overline{A(a(y,D))})$ under the same assumptions is an advantage of the representation $A(a(x,D))$ compared to the representation $U(a)(D)$. (See the next section.)
\end{rem}

\section{Remarks on representations in a factor of type II$_{\infty}$}

Let $\mathscr A$ be a von Neumann algebra (cf. \cite{Dixmier1,Kadison1}), that is, an algebra of bounded operators on a Hilbert space $H$, which is closed under the adjoint operation, and $\mathscr A=\mathscr A''$ where $\mathscr A'=\{ B \in \Lop(H): BA=AB \ \forall A \in \mathscr A \}$ denotes the commutator of $\mathscr A$.

A closed operator $T$ on a Hilbert space $H$ is said to be adjoined (or affiliated \cite{Kadison1}) to $\mathscr A$, denoted $T \eta \mathscr A$, if $BT \subseteq TB$ for all $B \in \mathscr A'$. This is equivalent to $(T-sI)^{-1} \in \mathscr A$ for all $s \notin \sigma_H(T)$ provided $\co \setminus \sigma_H(T) \neq \emptyset$, and if $T$ is selfadjoint then $T \eta \mathscr A$ is equivalent to $P_t \in \mathscr A$ for all spectral projections $P_t$, $t \in \ro$ (cf. \cite[Prop.~7.1]{Shubin2}). If $T$ is bounded then $T \eta \mathscr A$ means $T \in \mathscr A$.

In \cite{Coburn1,Shubin2,Shubin4b} it is shown that the representation $A=A(a(y,D))$ maps $APL_{\rho,\delta}^\infty$ into an algebra of operators that is associated with a certain von Neumann algebra $\mathscr A_B$. That is, we have $A(a(y,D)) \ \eta \ \mathscr A_B$ for all $a \in APS_{\rho,\delta}^\infty$. The von Neumann algebra $\mathscr A_B$ is generated by the family of operators $\{ M_\la \otimes M_\la, \ I \otimes T_\mu \}_{\la,\mu \in \rr d}$ acting on $B^2(\rr d) \otimes L^2(\rr d)$, and its commutant $\mathscr A_B'$ is generated by the family of operators $\{ T_{-\la} \otimes T_\la, \ M_\mu \otimes I \}_{\la,\mu \in \rr d}$ (cf. \cite{Dixmier1}).

The algebra $\mathscr A_B$ is a so-called \emph{factor}, meaning that $\mathscr A_B \cap \mathscr A_B' = \co I$. Furthermore, $\mathscr A_B$ is of type II$_{\infty}$ which means that there exists a faithful normal semifinite trace (cf. \cite{Kadison1,Shubin4b}) which takes values in $[0,+\infty]$ on the space of orthogonal projections in $\mathscr A_B$, and which is unique up to scalar multiplication. If $A$ is selfadjoint one defines the function $N(t) = {\rm tr} \ P_t$, where $\{ P_t \}_{t \in \ro}$ denotes the family of spectral projections, which characterizes the distribution of the spectrum of the operator $A$.

For a uniformly elliptic, essentially selfadjoint partial differential operator $a(y,D)$ with coefficients in $C_{\rm ap}^\infty(\rr d)$, it is possible to show results about the asymptotic behavior of $N(t)$ as $t \rightarrow +\infty$, and to give upper bounds on the lengths of lacunae (gaps) in the spectrum of $\overline{A(a(y,D))}$. Such results have been obtained by Shubin \cite{Shubin2}, using the representation $A(a(y,D))$. According to Proposition \ref{spectruminvariant1} (ii) we have $\sigma_{L^2}(\overline{a(x,D)}) = \sigma_{B^2}(\overline{a(x,D)}) = \sigma_{B^2 \otimes L^2}(\overline{A(a(x,D))})$, so these spectral results apply to the operator $\overline{a(x,D)}$ acting on $L^2$ and on $B^2$.

Due to the equivalence of the representations $A$ and $\wt U$ (Proposition \ref{equivalent1}), also the representation $\wt U (a(y,D))$ is adjoined to a von Neumann algebra of type II$_\infty$, provided $m \leq 0$. In fact, we have $\wt U (a(y,D)) \ \eta \ Q \mathscr A_B Q^*$ for all $a \in APL_{\rho,\delta}^0$, where $Q \mathscr A_B Q^*$ is a factor of type II$_{\infty}$, conjugate to $\mathscr A_B$. However, the representation $\wt U (a(y,D))$ does not seem to be as useful as $A(a(y,D))$ in order to obtain results for the spectrum of $\overline{a(y,D)}$ similar to Shubin's results described in the preceding paragraph, 
since for $m>0$, 
(i) $\wt U (a(y,D))$ is not adjoined to a factor of type II$_{\infty}$, and  
(ii) Proposition \ref{spectruminvariant1} gives no connection between $\sigma_{L^2}(\overline{a(x,D)})$ and $\sigma_{L^2(\rr d, l^2)}(\overline{U(a)(D)})$.

\end{document}